\definecolor{darkblue}{RGB}{0,0,160}
\newcommand{\nolisttopbreak}{\vspace{\topsep}\nobreak\@afterheading}
\theoremstyle{definition}
\newtheorem{definition}{Definition}[section]
\newtheorem{theorem}[definition]{Theorem}
\newtheorem{proposition}[definition]{Proposition}
\newtheorem{lemma}[definition]{Lemma}
\newtheorem{corollary}[definition]{Corollary}
\newtheorem*{fact*}{Fact}
\newtheorem{remark}[definition]{Remark}
\newtheorem{example}[definition]{Example}
\newtheorem{problem}[definition]{Problem}
\newcommand{\B}{\mathcal{B}}
\newcommand{\bk}{\mathfrak{b}}
\newcommand{\F}{\mathcal{F}}
\newcommand{\f}{\mathfrak{f}}
\newcommand{\G}{\mathcal{G}}
\newcommand{\Hc}{\mathcal{H}}
\newcommand{\Ik}{\mathfrak{I}}
\renewcommand{\L}{\mathfrak{L}}
\newcommand{\lk}{\mathfrak{l}}
\newcommand{\M}{\mathcal{M}}
\newcommand{\Mk}{\mathfrak{M}}
\newcommand{\NN}{\mathbb{N}}
\newcommand{\R}{\mathcal{R}}
\newcommand{\ZZ}{\mathbb{Z}}
\newcommand\nub{{\boldsymbol 0}}
\newcommand\ab{{\boldsymbol a}}
\newcommand\bb{{\boldsymbol b}}
\newcommand\eb{{\boldsymbol e}}
\newcommand\gb{{\boldsymbol g}}
\newcommand\fb{{\boldsymbol f}}
\newcommand\ib{{\boldsymbol i}}
\newcommand\hb{{\boldsymbol h}}
\newcommand\mb{{\boldsymbol m}}
\newcommand\rb{{\boldsymbol r}}
\newcommand\ub{{\boldsymbol u}}
\newcommand\vb{{\boldsymbol v}}
\newcommand\wb{{\boldsymbol w}}
\newcommand\xb{{\boldsymbol x}}
\newcommand{\restr}[1]{|_{#1}}
\DeclareMathOperator{\id}{id}
\DeclareMathOperator{\dlex}{dlex}
\DeclareMathOperator{\rev}{rev}
\DeclareMathOperator{\lex}{lex}
\DeclareMathOperator{\Sym}{Sym}
\DeclareMathOperator{\Inc}{Inc}
\DeclareMathOperator{\ini}{in}
\DeclareMathOperator{\supp}{supp}
\newcommand\defas{\coloneqq}
\title[Equivariant lattice bases]{
	Equivariant lattice bases
}
\author{Dinh Van Le}
\address{Universit\"at Osnabr\"uck\\ Osnabr\"uck, Germany}
\email{dlevan@uos.de}
\author{Tim R\"omer}
\address{Universit\"at Osnabr\"uck\\ Osnabr\"uck, Germany}
\email{troemer@uos.de}
\subjclass[2020]{Primary: 05E18; Secondary: 52C07, 20B30, 13P10}
\keywords{Gr\"{o}bner basis, Graver basis, equivariant, lattice, Markov basis, symmetric group}
\begin{document}
	\begin{abstract}
		We study lattices in free abelian groups of infinite rank that are invariant under the action of the infinite symmetric group, with emphasis on finiteness of their equivariant bases. Our framework provides a new method for proving finiteness results in algebraic statistics. As an illustration, we show that every invariant lattice in $\ZZ^{(\NN\times[c])}$, where $c\in\NN$, has a finite equivariant Graver basis. This result generalizes and strengthens several finiteness results about Markov bases in the literature.
	\end{abstract}
	\maketitle
	
	\section{Introduction}
	
	There are several notions of bases of a lattice in $\ZZ^n$ that are related in a hierarchy as follows (see, e.g. \cite[Chapter 1]{DSS}):
	\begin{align*}
			\text{lattice basis } &\subset   \text{ Markov basis } 
			\subset   \text{ Gr\"{o}bner basis} \\
			&\subset  \text{universal Gr\"{o}bner basis } 
			\subset   \text{ Graver basis.} 
		\end{align*}
	These bases have different origins and applications.	
	Markov bases were introduced in the seminal paper by Diaconis and Sturmfels \cite{DS} as a key tool for sampling algorithms that are used in Fisher's exact test (see \cite[Chapter 9]{Su}). 
	Graver bases \cite{Gr} play an important role in the theory of integer programming as they allow solving linear and various nonlinear integer programming problems in polynomial time (see \cite[Part II]{DHK}).
	As a powerful algebraic tool, Gr\"{o}bner bases can be used to computed Markov bases (via the so-called fundamental theorem of Markov bases \cite[Theorem 3.1]{DS}) and Graver bases (via the Lawrence lifting technique \cite[Chapter 7]{Stu}).
	
	The primary goal of this paper is to extend the above notions of bases to lattices in the free abelian group $\ZZ^{(I)}$, where the basis $I$ need not be finite, and to study them systematically. With motivations from algebraic statistics, we will mainly focus on the case $I=\NN^d\times[c]$, where $\NN$ denotes the set of positive integers, $c,d\in\NN$, and $[c]=\{1,\dots,c\}.$ In this situation, there is a natural action of the infinite symmetric group $\Sym$ on $\ZZ^{(I)}$ that acts diagonally on the unbounded indices and keeps the bounded index unchanged. We are interested in lattices in $\ZZ^{(I)}$ that are invariant under this action and their equivariant bases. More specifically, the main problem that we study in this paper is the following:
	\medskip
	
	\noindent{\bf Problem \ref{main-problem}.}
		Given a $\Sym$-invariant lattice $L$ in $\ZZ^{(I)}$, determine whether $L$ has a finite {equivariant generating set} (respectively, {Markov basis}, {Gr\"{o}bner basis}, {universal Gr\"{o}bner basis}, {Graver basis}).
	\medskip

	This problem is inspired by finiteness results and questions in algebraic statistics involving chains of increasing lattices of finite rank; see, e.g. \cite{AT,AH07,HS12,HoS,SSt}. In fact, any lattice $L$ in $\ZZ^{(I)}$ corresponds to a chain of increasing lattices $\L=(L_n)_{n\ge1}$, where $L_n=L\cap\ZZ^{(I_n)}$ is the truncation of $L$ in the free abelian group $\ZZ^{(I_n)}$ with finite basis $I_n=[n]^d\times [c]$ for $n\ge1$. One may view $L$ as a \emph{global} lattice and its truncations as \emph{local} ones. It is also useful to view $L=\bigcup_{n\ge1}L_n$ as the \emph{limit} of the chain $\L$.  
	
	One of the major questions in algebraic statistics concerns finiteness up to symmetry of Markov bases of certain chains of local lattices $\L=(L_n)_{n\ge1}$. A well-established method for studying this question combines algebraic and combinatorial tools. First, each lattice $L_n$ is assigned a lattice ideal $\Ik_{L_n}$ and the finiteness up to symmetry of Markov bases is translated into the stabilization up to symmetry of the chain of ideals $(\Ik_{L_n})_{n\ge1}$ (via the fundamental theorem of Markov bases mentioned above). Next, the stabilization up to symmetry of the chain $(\Ik_{L_n})_{n\ge1}$ is characterized by the finite generation up to symmetry of the limit ideal $\bigcup_{n\ge1}\Ik_{L_n}$ in an infinite-dimensional polynomial ring. Finally, the latter is usually proved using a combinatorial result called Higman's lemma \cite{Hig}. See, e.g. \cite{AH07,HS12} for details. See also \cite{HM} for a similar approach that is used to prove finiteness up to symmetry of generating sets of chains of local lattices via ideals in Laurent polynomial rings. 
	
	It should be noted that the technique of passing to infinite-dimensional limits has also been applied to study chains of varieties (see \cite{Dr10,Dr14,DK14}), cones and monoids (see \cite{KLR,LR21}), and convex sets (see \cite{LC}). This idea is closely related to a general phenomenon known as \emph{representation stability}, which is formalized in \cite{CF} and has appeared in diverse areas, including linear programming \cite{GHL} and machine learning \cite{LD}.
	
	In this paper, we provide a more direct approach to proving finiteness up to symmetry of lattice bases. Namely, instead of using ring theoretic tools, we characterize finiteness up to symmetry of various bases (generating sets, {Markov bases}, {Gr\"{o}bner bases}, {Graver bases}) of a chain of local lattices in terms of the corresponding finiteness property of the global lattice (see Theorems \ref{stabilization}, \ref{stabilization-others} and \Cref{stabilization-Groebner}). These \emph{local-global} results allow us to study lattices without translating them to ideals, which has advantages in certain situations. 
	
	The main result of this paper, which generalizes and strengthens the finiteness results about Markov bases of hierarchical models in \cite{AT,HoS,SSt}, illustrates the advantages of our approach. 
	\medskip
	
	\noindent{\bf Theorem \ref{Finiteness-Graver}.}
		Suppose that $I=\NN\times[c]$. Then every $\Sym$-invariant lattice in $\ZZ^{(I)}$ has a finite equivariant Graver basis.
	\medskip
	
	It seems difficult and inconvenient to prove the previous result using the ring theoretic tools described above. 
	Our proof employs a close relationship between the Graver basis of a lattice and the Hilbert basis of a related monoid (see \Cref{prop_Hilbert_Graver}). 
	It then uses Higman's lemma to show that this monoid has a finite equivariant Hilbert basis  (see \Cref{lattice-monoid-ext}). Notably, there is a somewhat simpler proof in the case $c=1$ that does not make use of Higman's lemma (see \Cref{thm_Graver}).
	
	As another contribution to the study of \Cref{main-problem}, we show that every $\Sym$-invariant lattice in $\ZZ^{(I)}$ has a finite equivariant generating set (see \Cref{Finiteness-generating}), extending a result of Hillar and Mart\'{i}n del Campo \cite{HM}. We also provide a global version of the independent set theorem of Hillar and Sullivant \cite{HS12}, which is one of the major results on finiteness of equivariant Markov bases (see \Cref{global-independent-set}).
	
	The paper is organized as follows. In \Cref{sec-bases} we define various bases for any lattice in $\ZZ^{(I)}$, discuss their relationship, and prove some general local-global results. \Cref{sec-equi-bases} deals with $\Sym$-invariant lattices and their equivariant bases, focusing on local-global results, which have stronger forms in this setting. The proof and consequences of \Cref{Finiteness-Graver} are given in \Cref{sec-equi-Graver}. Finally, we discuss finiteness of equivariant generating sets and Markov bases in \Cref{sec-Markov}.
	

	\section{Bases of lattices}
	\label{sec-bases}
	
	In this section we extend several notions of lattice bases to lattices of infinite rank. After briefly reviewing their algebraic interpretations, we discuss implications among these bases, as well as a close relationship between the Graver basis of a lattice and the Hilbert basis of its nonnegative monoid. The section closes with local-global results for lattice bases and Hilbert bases.
	
	To begin, let $I$ be an arbitrary set. Denote by $\ZZ^{(I)}$ the free abelian group with basis $I$. Write each element $\ub\in\ZZ^{(I)}$ as $\ub=(u_\ib)_{\ib\in I}$, where $u_\ib\in\ZZ$ and all but finitely many of them are zero. We call
	\[
	\supp(\ub)=\{\ib\in I\mid u_\ib\ne0\}
	\]
	the \emph{support} of $\ub$ and
	\[
	\|\ub\|=\sum_{\ib\in I}|u_\ib|
	\]
	the \emph{norm} of $\ub$.
	Let $\ZZ_{\ge0}^{(I)}$ be the submonoid of $\ZZ^{(I)}$ consisting of elements $\ub=(u_\ib)_{\ib\in I}$ with $u_\ib\ge0$ for all $\ib\in I$. So $\ZZ_{\ge0}^{(I)}$ is nothing but the free commutative monoid generated by $I$. Any $\ub\in\ZZ^{(I)}$ can be decomposed as $\ub=\ub^{+}-\ub^-$ with $\ub^{+},\ub^-\in\ZZ_{\ge0}^{(I)}$ given by
	\[
	u_\ib^+=\max\{u_\ib,0\} 
	\ \text{ and }\ 
	u_\ib^-=\max\{-u_\ib,0\}
	\ \text{ for all }\ \ib\in I.
	\]
	Note that $\ub^{+}$ and $\ub^{-}$ have disjoint supports. A \emph{term order} $\prec$ on $\ZZ_{\ge0}^{(I)}$ is a well-ordering that is additive, i.e. if $\ub\prec\vb$, then $\ub+\wb\prec\vb+\wb$ for all $\wb\in\ZZ_{\ge0}^{(I)}.$
	
	\begin{example}
		\label{orders}
		Later, we will be mostly concerned with the case $I=\NN^d\times[c]$, where $c,d$ are positive integers and $[c]=\{1,\dots,c\}.$ In this case, familiar term orders on $\ZZ_{\ge0}^{n}$ can be extended to $\ZZ_{\ge0}^{(I)}$. Let us consider first the subcase $I=\NN^d$ (i.e. $c=1$) for the sake of clarity. Denote by $B=\{\eb_\ib\}_{\ib\in\NN^d}$ the standard basis of $\ZZ^{(I)}$. The definition below works for any well-ordering on $B$. However, for our purposes, we always employ a particular well-ordering $\prec$ on $B$ that is a modification of the lexicographic order on $\NN^d$. Let $\max(\ib)$ denote the maximal component of $\ib\in\NN^d$. Then for $\eb_\ib,\eb_{\ib'}\in B$ we define $\eb_\ib\prec\eb_{\ib'}$ if either $\max(\ib)< \max(\ib')$, or $\max(\ib)=\max(\ib')$ and the leftmost nonzero component of the vector $\ib-\ib'$ is negative. For instance, when $d=2$, this order reads
		\[
		e_{(1,1)}<e_{(1,2)}<e_{(2,1)}<e_{(2,2)}<e_{(1,3)}<e_{(2,3)}<e_{(3,1)}<\cdots.
		\]
		Now list the terms of each element $\ub=\sum_{\ib\in I}u_\ib \eb_\ib\in\ZZ^{(I)}$ according to the order $\prec$. Let $\f(\ub)$ and $\lk(\ub)$ be respectively the coefficients of the first and last nonzero terms of $\ub$. So if
		\[
		\ub=2e_{(1,2)} + e_{(1,3)}+3e_{(3,1)}-4e_{(2,4)} ,
		\]
		then $\f(\ub)=2$ and $\lk(\ub)=-4$. Let $\vb,\wb\in\ZZ_{\ge0}^{(I)}$. One can check that the following orders are term orders on $\ZZ_{\ge0}^{(I)}$:
		\begin{enumerate}
			\item 
			\emph{Lexicographic order}: $\vb\prec_{\lex} \wb$ if $\lk(\vb-\wb)<0.$
			\item
			\emph{Degree lexicographic order}: $\vb\prec_{\dlex} \wb$ if either $\|\vb\|<\|\wb\|$, or $\|\vb\|=\|\wb\|$ and $\lk(\vb-\wb)<0.$
			\item
			\emph{Reverse lexicographic order}: $\vb\prec_{\rev} \wb$ if either $\|\vb\|<\|\wb\|$, or $\|\vb\|=\|\wb\|$ and $\f(\vb-\wb)>0.$
		\end{enumerate}
		
		These orders are defined in the same manner for the general case when $c\ge1$, provided that a well-ordering on the standard basis of $\NN^d\times[c]$ is given. Such an order can be obtained by extending the order $\prec$ above as follows: for standard basis elements $\eb_{\ib,j}$ and $\eb_{\ib',j'}$ of $\ZZ^{(I)}$ with $\ib,\ib'\in\NN^d$ and $j, j'\in [c]$, we define 
		\[
		\eb_{\ib,j}\prec\eb_{\ib',j'}
		\ \text{ if either }\ j<j',
		\text{ or } j= j'
		\text{ and } \eb_\ib\prec\eb_{\ib'}.
		\]
	\end{example}
	
	In what follows, by a \emph{lattice} $L$ in $\ZZ^{(I)}$ we mean a subgroup of $\ZZ^{(I)}$. Thus, in particular, $L$ is itself a free abelian group. For $\ub\in\ZZ_{\ge0}^{(I)}$ the \emph{$L$-fiber} of $\ub$ is defined as
	\[
	F_L(\ub)=\{\vb\in\ZZ_{\ge0}^{(I)}\mid \ub-\vb\in L\}.
	\]
	We will simply write this fiber as $F(\ub)$ if $L$ is clear from the context.
	For a subset $\B\subseteq L$ let $G(\ub,\B)$ be the (undirected) graph with vertices $F(\ub)$ and edges $(\vb,\wb)$ if $\vb-\wb\in\pm \B.$ 
	In general, the fiber $F(\ub)$ might be infinite, and hence $G(\ub,\B)$ need not be a finite graph.
	
	Let $\prec$ be a term order on $\ZZ_{\ge0}^{(I)}$. We denote by $G_\prec(\ub,\B)$ the directed graph with underlying undirected graph $G(\ub,\B)$ and an edge $(\vb,\wb)$ being directed from $\vb$ to $\wb$ if $\wb\prec\vb$. Since $\prec$ is a well-ordering, $F(\ub)$ always has a unique $\prec$-minimal element for any $\ub\in\ZZ_{\ge0}^{(I)}$. 
	
	We need another partial order on $\ZZ^{(I)}$ defined as follows: 
	\[
	\ub\sqsubseteq\vb
	\ \text{ if }\ u_\ib v_\ib\ge0
	\ \text{ and }\ |u_\ib|\le|v_\ib|
	\ \text{ for all }\ \ib\in I.
	\]
	Obviously, if $\ub\sqsubseteq\vb$, then $\|\ub\|\le\|\vb\|$. This implies that any non-empty subset of $\ZZ^{(I)}$ has a minimal element with respect to $\sqsubseteq$. In other words, $\sqsubseteq$ is a well-founded ordering. Note that the restriction of $\sqsubseteq$ on $\ZZ_{\ge0}^{(I)}$ is a partial order that is coarser than any term order $\prec$ on $\ZZ_{\ge0}^{(I)}$, i.e. if  $\ub\sqsubseteq\vb$, then $\ub\prec\vb$ for all $\ub,\vb\in\ZZ_{\ge0}^{(I)}$.
	
	Let us now extend several notions of lattice bases to our setting. In the next definition, the classical notions correspond to the case where $I$ is finite.
	
	\begin{definition}
		\label{def:bases}
		Let $L\subseteq\ZZ^{(I)}$ be a lattice and $\B\subseteq L$. Let $\prec$ be a term order on $\ZZ_{\ge0}^{(I)}$. We say that $\B$ is 
		\begin{enumerate}
			\item 
			a \emph{generating set} of $L$ if $\B$ generates the abelian group $L$, i.e. $L=\ZZ \B$;
			\item
			a \emph{Markov basis} of $L$ if the graph $G(\ub,\B)$ is connected for every $\ub\in\ZZ_{\ge0}^{(I)}$;
			\item
			a \emph{Gr\"{o}bner basis} of $L$ with respect to $\prec$ if for every $\ub\in\ZZ_{\ge0}^{(I)}$ there exists a directed path in $G_\prec(\ub,\B)$ from $\ub$ to the unique $\prec$-minimal element of $F(\ub)$;
			\item
			a \emph{universal Gr\"{o}bner basis} of $L$ if $\B$ is a Gr\"{o}bner basis of $L$ with respect to every term order on $\ZZ_{\ge0}^{(I)}$;
			\item
			the \emph{Graver basis} of $L$ if $\B$ is the set of all $\sqsubseteq$-minimal elements in $L\setminus\{\nub\}$.
		\end{enumerate}
	\end{definition}

	\begin{remark}
		The following reformulations of Markov basis and Graver basis are well-known in the classical case, and continue to hold in the general setting of \Cref{def:bases}.
		\begin{enumerate}
			\item 
			$\B$ is a Markov basis of $L$ if and only if for any $\ub\in\ZZ_{\ge0}^{(I)}$ and any $\vb,\wb\in F(\ub)$ there exist $\bb_1,\dots,\bb_s\in\pm \B$ such that
			\[
			\vb=\wb+\sum_{i=1}^s\bb_i
			\quad\text{and}\quad
			\wb+\sum_{i=1}^t\bb_i\in F(\ub) \ \text{ for all }\ 
			t\in [s].
			\]
			This is immediate from the definition: even if the graph $G(\ub,\B)$ is infinite, the connectivity of the graph still means that any two vertices are connected by a finite path; see, e.g. \cite[Chapter 8]{Di}.
			\item
			The Graver basis of $L$ is precisely the set of all elements $\ub\in L\setminus\{\nub\}$ that do not have a conformal decomposition. Here, a \emph{conformal decomposition} of $\ub$ is an expression of the form $\ub=\vb+\wb$ with $\vb,\wb\in L\setminus\{\nub\}$ and $|u_\ib|=|v_\ib|+|w_\ib|$ for all $\ib\in I$.
			Indeed, if $\ub=\vb+\wb$ is a conformal decomposition, then $\vb,\wb\sqsubset\ub$, and hence $\ub$ is not $\sqsubseteq$-minimal in $L\setminus\{\nub\}$. Conversely, if $\ub$ is not $\sqsubseteq$-minimal in $L\setminus\{\nub\}$, then there exists $\vb\in L\setminus\{\nub\}$ with $\vb\sqsubset\ub$, leading to a conformal decomposition $\ub=\vb+(\ub-\vb)$.
		\end{enumerate}
	\end{remark}
	
	As in the classical case, each of the above notion of basis has an algebraic interpretation, which we now briefly discuss. Let $R=K[x_\ib\mid \ib\in I]$ be the polynomial ring over a field $K$ with variables indexed by $I$. Also, let $R^\pm=R[x_\ib^{-1}\mid \ib\in I]$ be the ring of Laurent polynomials in the variables of $R$. For any ideal $\Ik\subseteq R$ let $\Ik^\pm=\Ik R^\pm$ denote the extension of $\Ik$ in $R^\pm$. So each element of $\Ik^\pm$ is of the form $fg^{-1}$, where $f\in \Ik$ and $g$ is a monomial in $R$. Any element $\ub=(u_\ib)_{\ib\in I}\in\ZZ_{\ge0}^{(I)}$ corresponds to a monomial $\xb^{\ub}=\prod_{\ib\in I}x_\ib^{u_\ib}\in R.$ Likewise, any term order $\prec$ on $\ZZ_{\ge0}^{(I)}$ corresponds to a term order on $R$, which is also denoted by $\prec$.
	Let $L$ be a lattice in $\ZZ^{(I)}$. The binomial ideal
	\[
	\mathfrak{I}_L=\langle \xb^{\ub}-\xb^\vb
	\mid\ub,\vb\in\ZZ_{\ge0}^{(I)},\ub-\vb\in L\rangle
	\subseteq R
	\]
	is called the \emph{lattice ideal} associated to $L$.
	It is easy to see that
	\[
	\Ik_L=\langle \xb^{\ub^+}-\xb^{\ub^-}
	\mid\ub\in L\rangle.
	\]
	A binomial $\xb^{\ub^+}-\xb^{\ub^-}\in \Ik_L$ is said to be \emph{primitive} if there is no other binomial $\xb^{\vb^+}-\xb^{\vb^-}$ in $\Ik_L$ such that $\xb^{\vb^+}$ divides $\xb^{\ub^+}$ and $\xb^{\vb^-}$ divides $\xb^{\ub^-}$. One can check that $\xb^{\ub^+}-\xb^{\ub^-}$ is primitive if and only if $\ub$ is a $\sqsubseteq$-minimal element of $L\setminus\{\nub\}$.
	For a subset $\B\subseteq L$ denote $\bk(\B)=\{\xb^{\bb^+}-\xb^{\bb^-} \mid\bb\in \B\}$ and
	\[
	\Ik_\B=\langle\bk(\B)\rangle
	=\langle \xb^{\bb^+}-\xb^{\bb^-}
	\mid\bb\in \B\rangle.
	\]
	
	The next result gives algebraic formulations of lattice bases and can be proved in a similar manner to the classical case; see \cite[Lemma 23]{HM}, \cite[Chapter 5]{Stu} and \cite[Chapter 11]{DHK}. The details are left to the reader.
	
	\begin{proposition}
		\label{bases_algebraic}
		Let $L\subseteq\ZZ^{(I)}$ be a lattice and $\B\subseteq L$. The following statements hold:
		\begin{enumerate}
			\item 
			$\B$ is a generating set of $L$ if and only if $\Ik_L^{\pm}=\Ik_\B^{\pm}$.
			\item
			$\B$ is a Markov basis of $L$ if and only if $\Ik_L=\Ik_\B$.
			\item
			$\B$ is a Gr\"{o}bner basis of $L$ with respect to a term order $\prec$ if and only if $\bk(\B)$ is a Gr\"{o}bner basis of the lattice ideal $\Ik_L$ with respect to $\prec$.
			\item
			$\B$ is a universal Gr\"{o}bner basis of $L$ if and only if $\bk(\B)$ is a universal Gr\"{o}bner basis of $\Ik_L$.
			\item
			$\B$ is the Graver basis of $L$ if and only if $\bk(\B)$ is the set of primitive binomials of $\Ik_L$.
		\end{enumerate} 
	\end{proposition}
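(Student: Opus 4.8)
The five statements are the infinite-variable analogues of classical facts about lattice ideals, and the plan is to prove each by mimicking the classical argument while keeping track of the one structural tool that makes everything work: the grading of $R$ by the abelian group $A=\ZZ^{(I)}/L$ in which $\xb^{\ub}$ has degree $\ol{\ub}$ (the image of $\ub$). Both $\Ik_L$ and $\Ik_\B$ are $A$-homogeneous, since their defining binomials $\xb^{\ub^+}-\xb^{\ub^-}$ with $\ub\in L$ satisfy $\ol{\ub^+}=\ol{\ub^-}$. Moreover the homogeneous component $R_{\ol{\ub}}$ has $K$-basis $\{\xb^\vb\mid \vb\in F_L(\ub)\}$, because $\ol{\vb}=\ol{\ub}$ is exactly the condition $\ub-\vb\in L$, i.e. $\vb\in F_L(\ub)$. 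This dictionary turns questions about fibers into questions about single graded pieces of $R$, $\Ik_L$, $\Ik_\B$, and it uses no finiteness of $I$.

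For (i) I would pass to the Laurent ring and identify $R^\pm$ with the group algebra $K[\ZZ^{(I)}]$. Multiplying by units gives $\Ik_L^\pm=\langle \xb^{\ub}-1\mid \ub\in L\rangle$ and $\Ik_\B^\pm=\langle \xb^{\bb}-1\mid \bb\in\B\rangle$, and the cocycle identities $\xb^{\ub+\vb}-1=\xb^{\ub}(\xb^\vb-1)+(\xb^{\ub}-1)$ and $\xb^{-\ub}-1=-\xb^{-\ub}(\xb^{\ub}-1)$ upgrade the latter to $\Ik_\B^\pm=\langle\xb^{\ub}-1\mid \ub\in\ZZ\B\rangle$, which is precisely the kernel of the surjection $K[\ZZ^{(I)}]\to K[\ZZ^{(I)}/\ZZ\B]$. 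Hence $\Ik_L^\pm=\Ik_\B^\pm$ iff every $\ub\in L$ lies in $\ZZ\B$, i.e. (as $\B\subseteq L$) iff $L=\ZZ\B$.

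For (ii), (iii) and (v) the common mechanism is a description of $(\Ik_\B)_{\ol{\ub}}$ inside $R_{\ol{\ub}}$: it is spanned by the binomials $\xb^{\gb+\bb^+}-\xb^{\gb+\bb^-}$ with $\bb\in\B$, $\gb\in\ZZ_{\ge0}^{(I)}$ and $\gb+\bb^+\in F_L(\ub)$, that is, by differences of $\B$-adjacent monomials in the fiber. Collapsing each connected component of $G(\ub,\B)$ to a point then shows that, for $\vb,\wb\in F_L(\ub)$, one has $\xb^\vb-\xb^\wb\in\Ik_\B$ iff $\vb$ and $\wb$ lie in the same component of $G(\ub,\B)$; since $\Ik_L$ is generated by the binomials $\xb^{\ub^+}-\xb^{\ub^-}$ with $\ub\in L$ (and $\ub^+,\ub^-$ lie in the same fiber), a short translation argument yields $\Ik_L=\Ik_\B$ iff every $G(\wb,\B)$ is connected, proving (ii); the Remark's observation that connectivity forces \emph{finite} connecting paths is what keeps the telescoping sums finite. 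For (iii) I would combine the same grading with the fact that a term order $\prec$ is a well-ordering, so division modulo $\bk(\B)$ terminates: a reduction step $\xb^{\ub}\to\xb^{\ub-\bb}$ is exactly a directed edge of $G_\prec(\ub,\B)$, the standard monomials of $\ini_\prec\Ik_L$ are precisely the $\prec$-minimal elements of the fibers, and one checks that $\langle \ini_\prec(\xb^{\bb^+}-\xb^{\bb^-})\mid\bb\in\B\rangle=\ini_\prec\Ik_L$ iff from every $\ub$ there is a directed path in $G_\prec(\ub,\B)$ to the $\prec$-minimum of $F_L(\ub)$, i.e. iff $\B$ is a Gr\"obner basis of $L$. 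Part (iv) is then immediate by quantifying over all term orders, and (v) follows from the already-noted equivalence ``$\xb^{\ub^+}-\xb^{\ub^-}$ primitive $\iff$ $\ub$ is $\sqsubseteq$-minimal in $L\setminus\{\nub\}$'' together with the sign-insensitive bijection $\ub\leftrightarrow\xb^{\ub^+}-\xb^{\ub^-}$.

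I do not anticipate a genuine obstacle: the content of the proposal is that moving from finite to infinite $I$ changes nothing once one has (a) the $\ZZ^{(I)}/L$-grading, which is purely formal; (b) well-ordered term orders, so that the division algorithm terminates (this is exactly why the definition of term order above requires a well-ordering); and (c) the fact that connectivity of a possibly infinite graph still produces finite connecting paths. The step requiring the most care is the converse direction of (ii) --- deducing connectivity of \emph{every} fiber from $\Ik_L=\Ik_\B$ --- which is where the graded decomposition of $(\Ik_\B)_{\ol{\ub}}$ into $\B$-adjacent differences does the real work; the remaining verifications are routine bookkeeping along the classical proofs in \cite[Lemma 23]{HM}, \cite[Chapter 5]{Stu} and \cite[Chapter 11]{DHK}.
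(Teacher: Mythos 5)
Your proof is correct. It follows a genuinely more structural route than the one the paper gestures at: you organize everything around the $A$-grading of $R$ with $A=\ZZ^{(I)}/L$, and this single device makes the infinite-index setting transparent. The paper leaves the details to the reader with a pointer to the classical references, and the proof it has in mind (visible in the source's commented-out draft) is more computational: for (ii) it proves the forward direction by telescoping a finite path of $\B$-adjacent fiber elements and the converse by inducting on the number of terms in a fixed expression $\xb^\vb-\xb^\wb=\sum_i\epsilon_i\xb^{\ab_i}(\xb^{\bb_i^+}-\xb^{\bb_i^-})$; for (i) it reduces the converse to the induction in (ii) after clearing a denominator $\xb^\ab$; for (iii) it argues directly with normal forms, identifying the normal form of $\xb^\ub$ with the $\prec$-minimum of $F(\ub)$. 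Your alternative replaces the inductions by two clean vector-space observations: for (i), the group-algebra identification $\Ik_\B^\pm=\ker\bigl(K[\ZZ^{(I)}]\to K[\ZZ^{(I)}/\ZZ\B]\bigr)$, and for (ii), the description of $(\Ik_\B)_{\ol{\ub}}$ as the kernel of the collapsing map $R_{\ol{\ub}}\to\bigoplus_\lambda K$ indexed by the connected components of $G(\ub,\B)$. What the grading buys you is that the graded piece $(\Ik_\B)_{\ol{\ub}}$ is spanned by monomial multiples $\xb^\gb(\xb^{\bb^+}-\xb^{\bb^-})$ of the $A$-homogeneous generators, turning the whole comparison $\Ik_L$ versus $\Ik_\B$ into a fiber-by-fiber question with no induction on representation length; what the paper's route buys is that it makes no reference to gradings at all and tracks the finite path explicitly, which some readers find more concrete. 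Both arguments correctly isolate the two infinite-$I$ points that could fail and don't (finite connecting paths in a connected infinite graph, termination of reduction under a well-ordered term order). The only place you should add a line when writing this up is the justification that $(\Ik_\B)_{\ol{\ub}}$ is spanned by \emph{monomial} multiples of the generators, which is where $A$-homogeneity of each $\xb^{\bb^+}-\xb^{\bb^-}$ enters; this is immediate but worth stating since it is the hinge of both (ii) and (iii).
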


	Among different bases of a lattice there is the following relationship that is well-known in the classical case; see, e.g. \cite[Section 1.3]{DSS}.
	
	\begin{proposition}
		\label{implications}
		For any lattice $L\subseteq\ZZ^{(I)}$, the following implications hold for its bases:
		\begin{align*}
			\text{Graver basis } &\Rightarrow   \text{ universal Gr\"{o}bner basis } 
			\Rightarrow   \text{ Gr\"{o}bner basis} \\
			&\Rightarrow   \text{ Markov basis } 
			\Rightarrow   \text{ generating set.} 
		\end{align*}
	\end{proposition}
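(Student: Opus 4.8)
The plan is to verify the four implications in the chain separately. Three of them are essentially formal, and the whole weight of the statement lies in the first implication, \emph{Graver basis $\Rightarrow$ universal Gr\"obner basis}. For \emph{universal Gr\"obner basis $\Rightarrow$ Gr\"obner basis} there is nothing to prove: by definition a universal Gr\"obner basis is a Gr\"obner basis with respect to every term order, hence with respect to any chosen one (such orders exist, e.g.\ those of \Cref{orders} in the case $I=\NN^d\times[c]$). For \emph{Gr\"obner basis $\Rightarrow$ Markov basis}, I would fix $\ub\in\ZZ_{\ge0}^{(I)}$, let $\rb$ be the $\prec$-minimal element of $F(\ub)$, and use the elementary fact that $\vb\in F(\ub)$ forces $F(\vb)=F(\ub)$ and $G_\prec(\vb,\B)=G_\prec(\ub,\B)$; applying the Gr\"obner property at $\vb$ then produces a directed path from $\vb$ to $\rb$ in $G_\prec(\ub,\B)$, so every vertex of $G(\ub,\B)$ is joined to $\rb$ and the graph is connected. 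For \emph{Markov basis $\Rightarrow$ generating set}, given $\ub\in L$ I would write $\ub=\ub^+-\ub^-$; since $\ub^+,\ub^-\in F(\ub^+)$ and $G(\ub^+,\B)$ is connected, there is a finite walk $\ub^+=\vb_0,\vb_1,\dots,\vb_s=\ub^-$ with each $\vb_{i-1}-\vb_i\in\pm\B$, and telescoping gives $\ub=\sum_{i=1}^s(\vb_{i-1}-\vb_i)\in\ZZ\B$; combined with $\ZZ\B\subseteq L$ this yields $L=\ZZ\B$.

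For the main implication, let $\G$ denote the Graver basis of $L$, fix a term order $\prec$ and an element $\ub\in\ZZ_{\ge0}^{(I)}$, and let $\rb$ be the $\prec$-minimal element of $F(\ub)$. The heart of the argument is the claim that \emph{for every $\vb\in F(\ub)$ with $\vb\ne\rb$ there is some $\gb\in\G$ with $\gb^+\sqsubseteq\vb$ and $\gb^-\prec\gb^+$.} Granting this, $\gb$ provides an element $\vb-\gb=(\vb-\gb^+)+\gb^-\in\ZZ_{\ge0}^{(I)}$ which lies in $F(\ub)$ (because $\ub-(\vb-\gb)=(\ub-\vb)+\gb\in L$) and satisfies $\vb-\gb\prec\vb$ by additivity of $\prec$; that is, $G_\prec(\ub,\G)$ has an edge directed out of every vertex other than $\rb$, while $\rb$ has none. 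Since $\prec$ is a well-ordering, starting at $\ub$ and following such edges produces a strictly $\prec$-decreasing, hence finite, directed walk, which can only terminate at $\rb$. Thus $\G$ is a Gr\"obner basis with respect to $\prec$, and since $\prec$ was arbitrary, a universal Gr\"obner basis.

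It remains to prove the claim. Set $\wb=\vb-\rb\in L\setminus\{\nub\}$; since $\rb\in\ZZ_{\ge0}^{(I)}$ one has $\wb^+\sqsubseteq\vb$, and from $\rb\prec\vb$, the identity $\vb+\wb^-=\rb+\wb^+$, and additivity of $\prec$ one gets $\wb^-\prec\wb^+$. Hence the set $S=\{\wb\in L\setminus\{\nub\}\mid \wb^+\sqsubseteq\vb,\ \wb^-\prec\wb^+\}$ is nonempty, and I would pick $\gb\in S$ of minimal norm. Then $\gb$ has no conformal decomposition, so $\gb\in\G$ by the characterization of the Graver basis recorded in the remark following \Cref{def:bases}: if $\gb=\gb_1+\gb_2$ were a conformal decomposition with $\gb_1,\gb_2\in L\setminus\{\nub\}$, then $\gb^\pm=\gb_1^\pm+\gb_2^\pm$ and $\|\gb\|=\|\gb_1\|+\|\gb_2\|$, so $\gb^-\prec\gb^+$ together with additivity of $\prec$ rules out $\gb_1^+\preceq\gb_1^-$ and $\gb_2^+\preceq\gb_2^-$ holding simultaneously; whichever part $\gb_k$ satisfies $\gb_k^-\prec\gb_k^+$ also satisfies $\gb_k^+\sqsubseteq\gb^+\sqsubseteq\vb$ and $\|\gb_k\|<\|\gb\|$, so $\gb_k\in S$, contradicting minimality of $\|\gb\|$.

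The step I expect to be the main obstacle is precisely this last point: upgrading an arbitrary witness in $L$ to one lying in the Graver basis, which rests on the interplay between the order $\sqsubseteq$, conformal decompositions, and the additivity of $\prec$. It is also the only place where the possible infiniteness of $I$ could matter, but it causes no trouble: norms still take values in $\NN$, so a minimal-norm element of $S$ exists, and $\prec$ is still a well-ordering, so the directed walk in $G_\prec(\ub,\G)$ still terminates after finitely many steps.
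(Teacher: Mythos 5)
Your proof is correct, and for the main implication (Graver basis $\Rightarrow$ universal Gr\"obner basis) it takes a genuinely different route from the paper. The paper translates to lattice ideals (Proposition~\ref{bases_algebraic}(iii)) and, following Sturmfels, shows $\xb^{\ub^+}\in\ini_\prec(\Ik_\G)$ for all $\ub\succ\nub$ by a minimal-counterexample argument on $\ub^-$ under the well-ordering $\prec$; it then uses a $\sqsubseteq$-minimal $\vb\in\G$ with $\vb\sqsubseteq\ub$ to derive a contradiction. You stay entirely within the combinatorial Definition~\ref{def:bases}(iii): for every $\vb\ne\rb$ in the fiber you produce a Graver element $\gb$ with $\gb^+\sqsubseteq\vb$ and $\gb^-\prec\gb^+$ by taking a $\|\cdot\|$-minimal element of the witness set $S$ and ruling out conformal decompositions, which gives a $\prec$-decreasing step; termination then follows from well-orderedness of $\prec$. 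Both proofs hinge on the same two ingredients — additivity of $\prec$ and the conformal-decomposition characterization of the Graver basis — but you avoid the passage to ideals entirely, which makes the argument more self-contained and arguably closer to the spirit of the fiber/graph definitions. One small point of economy versus the paper: the paper minimizes a counterexample once globally, while you invoke a fresh minimization for each fiber element $\vb$; both are sound, and the extra care you take (verifying that $\rb$ has no outgoing edge, that $\wb^-\prec\wb^+$ follows from $\rb\prec\vb$ by cancellation, and that exactly one $\gb_k$ in a conformal split must satisfy $\gb_k^-\prec\gb_k^+$) is all correct. You also spell out the last three implications, which the paper dismisses as immediate; your arguments for Gr\"obner $\Rightarrow$ Markov (via $F(\vb)=F(\ub)$) and Markov $\Rightarrow$ generating set (telescoping along a walk from $\ub^+$ to $\ub^-$) are exactly the intended ones.
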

	
	\begin{proof}
		We only prove the first implication, since the last three ones are immediate from \Cref{def:bases} (or \Cref{bases_algebraic}). The argument is similar to the proof of \cite[Lemma 4.6]{Stu}, but we include it here for the convenience of the reader. Let $\G$ be the Graver basis of $L$ and $\prec$ a term order on $\ZZ_{\ge0}^{(I)}$. Denote by $\ini_\prec(\Ik_\G)$ the initial ideal of $\Ik_\G$ with respect to $\prec$. Also, let 
		$L_{\succ}=\{\ub\in L:\ub\succ\nub\}
		=\{\ub\in L:\ub^+\succ\ub^-\}.$  
		It suffices to show that $\xb^{\ub^+}\in \ini_\prec(\Ik_\G)$ for every $\ub\in L_{\succ}$. Suppose on the contrary that $\xb^{\ub^+}\not\in \ini_\prec(\Ik_\G)$ for some $\ub\in L_{\succ}$. Since $\prec$ is a well-ordering, we may choose such $\ub$ with $\ub^-$ being $\prec$-minimal. We show that $\xb^{\ub^-}\not\in \ini_\prec(\Ik_\G).$ Indeed, if $\xb^{\ub^-}\in \ini_\prec(\Ik_\G)$, then there exists $\bb\in \G\cap L_{\succ}$ such that $\xb^{\bb^+}$ divides $\xb^{\ub^-}$, say, $\xb^{\ub^-}=\xb^{\ab}\xb^{\bb^+}$ for some $\ab\in\ZZ_{\ge0}^{(I)}$. From $\ub,\bb\succ\nub$ it follows that $\ub+\bb\succ\nub$, i.e. $\ub+\bb\in L_{\succ}$. Moreover, since
		\[
		(\ub+\bb)^+-(\ub+\bb)^- = \ub+\bb
		=\ub^+-\ub^-+ \bb^+-\bb^-
		=\ub^+-(\ab+\bb^-)
		\]
		and $(\ub+\bb)^+,(\ub+\bb)^-$ have disjoint supports, we get 
		$(\ub+\bb)^+\sqsubseteq\ub^+$ and $(\ub+\bb)^-\sqsubseteq \ab+\bb^-$. Consequently, $\xb^{(\ub+\bb)^+}\not\in \ini_\prec(\Ik_\G)$ and
		\[
		(\ub+\bb)^-\prec \ab+\bb^-
		\prec \ab+\bb^+ = \ub^-.
		\]
		But this contradicts the minimality of $\ub^-$. Hence, we must have $\xb^{\ub^-}\not\in \ini_\prec(\Ik_\G).$
		
		Now let $\vb\in \G$ be a $\sqsubseteq$-minimal element of $L$ with $\vb\sqsubseteq\ub$. Then $\xb^{\vb^+}$ divides $\xb^{\ub^+}$ and $\xb^{\vb^-}$ divides $\xb^{\ub^-}$. Since either $\xb^{\vb^+}$ or $\xb^{\vb^-}$ belongs to $\ini_\prec(\Ik_\G)$, so does either $\xb^{\ub^+}$ or $\xb^{\ub^-}$. This contradiction completes the proof.
	\end{proof}
	
	Graver bases of lattices are also closely related to Hilbert bases of monoids. To present this relationship, let us first recall some notions. Given a monoid $M\subseteq\ZZ^{(I)}$, we say that $M$ is \emph{generated} by a subset $A\subseteq M$ if any element of $M$ is a \emph{$\ZZ_{\geq 0}$-linear combination} of elements of $A$, i.e.
	$$
	M=\Big\{\sum_{i=1}^lm_i\ab_i\mid l\in\NN,\ \ab_i\in A,\
	m_i\in\ZZ_{\geq 0}\Big\}.
	$$
	Any minimal generating set of $M$ is called a \emph{Hilbert basis}.
	It is known that if $M$ is contained in $\ZZ_{\ge0}^{(I)}$ and finitely generated, then it has a unique Hilbert basis that consists of all \emph{irreducible} elements, i.e. those elements $\ub\in M\setminus\{\nub\}$ with the property that if $\ub=\vb+\wb$ for $\vb,\wb\in M$, then either $\vb=\nub$ or $\wb=\nub$; see, e.g. \cite[Definition 2.15]{BG}. This can be extended to our general setting as follows. 
	
	\begin{lemma}
		\label{lem_Hilbert_monoid}
		Let $M\subseteq \ZZ_{\ge0}^{(I)}$ be a monoid. Then the following statements hold:
		\begin{enumerate}
			\item 
			$M$ has a unique Hilbert basis $\Hc$ that consists of all irreducible elements of $M$.
			\item
			If there is a lattice $L\subseteq \ZZ^{(I)}$ such that $M=L\cap \ZZ_{\ge0}^{(I)}$, then $\Hc$ is exactly the set of all $\sqsubseteq$-minimal elements of $M\setminus\{\nub\}$. In particular, $\Hc=\G\cap\ZZ_{\ge0}^{(I)}$, where $\G$ denotes the Graver basis of $L$.
		\end{enumerate}
	\end{lemma}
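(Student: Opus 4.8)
The plan is to first establish part (i) by a straightforward Noetherian-induction argument on the well-founded ordering $\sqsubseteq$, restricted to $\ZZ_{\ge0}^{(I)}$ where it is a genuine partial order coarser than the norm. For existence of a generating set by irreducibles: let $\Hc$ be the set of irreducible elements of $M$. I claim $\Hc$ generates $M$. Suppose not; since $\sqsubseteq$ is well-founded, pick $\ub\in M\setminus\{\nub\}$ that is $\sqsubseteq$-minimal among elements of $M$ not expressible as a $\ZZ_{\ge0}$-combination of $\Hc$. Then $\ub$ is not irreducible (it is not in $\Hc$ and is nonzero), so $\ub=\vb+\wb$ with $\vb,\wb\in M\setminus\{\nub\}$. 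Since $\vb,\wb\in\ZZ_{\ge0}^{(I)}$ and neither is zero, $\vb\sqsubset\ub$ and $\wb\sqsubset\ub$ (here I use that addition of nonzero nonnegative vectors strictly increases each nonzero coordinate pattern, so $\|\vb\|,\|\wb\|<\|\ub\|$ and the supports are contained in $\supp(\ub)$). By minimality both $\vb$ and $\wb$ are $\ZZ_{\ge0}$-combinations of $\Hc$, hence so is $\ub$, a contradiction. For uniqueness and minimality: any generating set must contain every irreducible element (an irreducible cannot be a combination $\sum m_i\ab_i$ unless some $m_i\ab_i=\ab_i$ equals it), so $\Hc$ is contained in every generating set; combined with the fact that $\Hc$ generates, $\Hc$ is the unique minimal generating set.

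For part (ii), suppose $M=L\cap\ZZ_{\ge0}^{(I)}$ for a lattice $L$. I will show that an element $\ub\in M\setminus\{\nub\}$ is irreducible in $M$ if and only if it is $\sqsubseteq$-minimal in $M\setminus\{\nub\}$. If $\ub$ is not $\sqsubseteq$-minimal, there is $\vb\in M\setminus\{\nub\}$ with $\vb\sqsubset\ub$; then $\wb\defas\ub-\vb$ satisfies $w_\ib=u_\ib-v_\ib\ge0$ for all $\ib$ (because $\vb\sqsubseteq\ub$ forces $0\le v_\ib\le u_\ib$ coordinatewise on $\ZZ_{\ge0}^{(I)}$), so $\wb\in\ZZ_{\ge0}^{(I)}$, and $\wb=\ub-\vb\in L$, hence $\wb\in M$; moreover $\wb\ne\nub$ since $\vb\ne\ub$. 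Thus $\ub=\vb+\wb$ is a nontrivial decomposition in $M$, so $\ub$ is reducible. Conversely, if $\ub=\vb+\wb$ with $\vb,\wb\in M\setminus\{\nub\}$, then $\vb\in\ZZ_{\ge0}^{(I)}$ and $\vb\sqsubseteq\ub$ with $\vb\ne\ub$ (as $\wb\ne\nub$), so $\ub$ is not $\sqsubseteq$-minimal. This proves $\Hc$ equals the set of $\sqsubseteq$-minimal elements of $M\setminus\{\nub\}$.

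For the final assertion, $\Hc=\G\cap\ZZ_{\ge0}^{(I)}$ where $\G$ is the Graver basis of $L$: by \Cref{def:bases}(v), $\G$ is the set of $\sqsubseteq$-minimal elements of $L\setminus\{\nub\}$. If $\ub\in\Hc$, then $\ub$ is $\sqsubseteq$-minimal in $M\setminus\{\nub\}\subseteq L\setminus\{\nub\}$; I must check it remains minimal in the larger set $L\setminus\{\nub\}$. But any $\vb\in L$ with $\vb\sqsubseteq\ub$ and $\ub\in\ZZ_{\ge0}^{(I)}$ automatically has $0\le v_\ib\le u_\ib$, so $\vb\in\ZZ_{\ge0}^{(I)}\cap L=M$; hence minimality in $M\setminus\{\nub\}$ upgrades to minimality in $L\setminus\{\nub\}$, giving $\ub\in\G$, so $\ub\in\G\cap\ZZ_{\ge0}^{(I)}$. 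Conversely if $\ub\in\G\cap\ZZ_{\ge0}^{(I)}$, then $\ub\in M\setminus\{\nub\}$ and $\ub$ is $\sqsubseteq$-minimal in $L\setminus\{\nub\}$, a fortiori in $M\setminus\{\nub\}$, so $\ub\in\Hc$. I do not expect a serious obstacle here; the only point requiring a little care is the well-founded induction in part (i) and the repeated use of the elementary fact that on $\ZZ_{\ge0}^{(I)}$ the relation $\sqsubseteq$ coincides with coordinatewise $\le$, which makes differences of comparable elements stay nonnegative — this is what lets the argument go through verbatim even when $I$ is infinite and $M$ is not finitely generated.
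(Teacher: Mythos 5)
Your proof is correct and follows essentially the same route as the paper's. In part (i) you run a Noetherian induction on the well-founded order $\sqsubseteq$ where the paper inducts on $\|\ub\|$; since $\sqsubseteq$ is well-founded precisely because the norm strictly decreases, these are the same argument in slightly different dress. Your part (ii) matches the paper's proof exactly (decompose $\ub=\vb+(\ub-\vb)$ and use $L\cap\ZZ_{\ge0}^{(I)}=M$). For the final assertion $\Hc=\G\cap\ZZ_{\ge0}^{(I)}$, you spell out the small but necessary observation that $\sqsubseteq$-minimality in $M\setminus\{\nub\}$ upgrades to minimality in $L\setminus\{\nub\}$ because any $\vb\in L$ with $\vb\sqsubseteq\ub$ and $\ub\ge\nub$ is automatically in $\ZZ_{\ge0}^{(I)}$, hence in $M$; the paper leaves this implicit behind the words ``in particular.'' No gaps.
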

	
	\begin{proof}
		(i) Let $\Hc$ denote the set of all irreducible elements of $M$. Obviously, $\Hc$ is contained in any generating set of $M$. So to prove $\Hc$ is the unique Hilbert basis of $M$, it suffices to show that $\Hc$ generates $M$, i.e. every element $\ub\in M$ can be written as a $\ZZ_{\geq 0}$-linear combination of elements of $\Hc$. We proceed by induction on $\|\ub\|$. The case $\|\ub\|=0$ is trivial. Suppose $\|\ub\|>0$. If $\ub$ is irreducible, then we are done. Otherwise, there exist $\vb,\wb\in M\setminus\{\nub\}$ such that $\ub=\vb+\wb$. This implies $\|\wb\|,\|\vb\|<\|\ub\|$ since $\vb,\wb\in\ZZ_{\ge0}^{(I)}$. So the induction hypothesis yields the desired conclusion. 
		
		(ii) Evidently, every $\sqsubseteq$-minimal element of $M\setminus\{\nub\}$ is irreducible. Therefore, it remains to show that any irreducible element $\ub\in M$ is also $\sqsubseteq$-minimal in $M\setminus\{\nub\}$. Suppose not. Then there would exist $\vb\in M\setminus\{\nub\}$ with $\vb\sqsubset \ub$. From 
		$\ub-\vb\in L\cap\ZZ_{\ge0}^{(I)}=M$
		and $\ub-\vb\ne\nub,$ 
		it follows that $\ub=\vb+(\ub-\vb)$ is reducible, a contradiction. 
	\end{proof}

	We conclude this section with results of the \emph{local-global} type that was developed in \cite{KLR,LR21}. From now on we restrict our attention to the case $I=\NN^d\times[c]$, where $c$ and $d$ are positive integers. For any $n\in\NN$ let $I_n=[n]^d\times[c]\subset I$. Then each group $\ZZ^{(I_n)}$ can be regarded as a subgroup of $\ZZ^{(I)}$ via the natural inclusion. More specifically, each element $(u_\ib)_{\ib\in I_n}\in\ZZ^{(I_n)}$ is identified with $(u_\ib)_{\ib\in I}\in\ZZ^{(I)}$, where $u_\ib=0$ for all $\ib\in I\setminus I_n.$ In this manner, we obtain an increasing chain of subgroups
	\[
	\ZZ^{(I_1)}\subset \ZZ^{(I_2)}\subset
	\cdots\subset \ZZ^{(I_n)}\subset
	\cdots,
	\]
	whose limit is $\ZZ^{(I)}=\bigcup_{n\ge1}\ZZ^{(I_n)}.$ By intersecting with this chain, any lattice $L\subseteq\ZZ^{(I)}$ can be approximated through a chain of sublattices of finite rank
	\[
	L_1\subset L_2\subset
	\cdots\subset L_n\subset
	\cdots.
	\]
	Here, each $L_n=L\cap\ZZ^{(I_n)}$ is a truncation of $L$, and conversely, $L=\bigcup_{n\ge1}L_n$ is the limit of the chain $(L_n)_{n\ge1}$. So one can investigate $L$ (viewed as a \emph{global} lattice) using its truncations $L_n$ (viewed as \emph{local} lattices), and vice versa. 
	It should be mentioned that similar ideas have been applied to study ideals, varieties, monoids, and cones in infinite dimensional ambient spaces; see, e.g. \cite{AH07,Dr14,HS12,IY,KLR,LR21}.

	The next result shows how to construct bases of a global lattice from its local bases. For Graver bases the other way around is also possible. 
	
	\begin{proposition}
		\label{prop_local_global}
		Let $L\subseteq\ZZ^{(I)}$ be a lattice with truncations $L_n=L\cap\ZZ^{(I_n)}$ for $n\ge1$.
		\begin{enumerate}
			\item 
			If $\B_n\subseteq L_n$ is a generating set (respectively, Markov basis, universal Gr\"{o}bner basis, Graver basis) of $L_n$ for all $n\ge 1$, then $\B=\bigcup_{n\ge1}\B_n$ is so of $L$. This also holds for Gr\"{o}bner bases provided that the term orders considered are compatible: if $\prec$ is a term order on $\ZZ_{\ge0}^{(I)}$ and each $\B_n$ is a Gr\"{o}bner basis of $L_n$ with respect to the restriction of $\prec$ on $\ZZ_{\ge0}^{(I_n)}$, then $\B$ is a Gr\"{o}bner basis of $L$ with respect to $\prec$.
			\item
			If $\G$ is the Graver basis of $L$, then $\G_n=\G\cap\ZZ^{(I_n)}$ is the Graver basis of $L_n$ for all $n\ge1$.
		\end{enumerate}
	\end{proposition}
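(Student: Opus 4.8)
The plan is to leverage the single structural fact that underlies every case: each vector in $\ZZ^{(I)}$ has finite support, so any \emph{finite} collection of vectors that we need to manipulate already lies in some $\ZZ^{(I_m)}$, where the local hypothesis on $\B_m$ is at our disposal. Throughout I will use that $L\cap\ZZ^{(I_m)}=L_m$ by definition, and that $\sqsubseteq$-comparability $\wb\sqsubseteq\vb$ forces $\supp(\wb)\subseteq\supp(\vb)$.

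I would first dispatch the cases governed purely by finite support of lattice vectors. For generating sets, $\ZZ\B\subseteq L$ is clear, and conversely any $\ub\in L$ lies in $L_m$ for $m$ large, hence in $\ZZ\B_m\subseteq\ZZ\B$. For the Graver assertions in (i) and (ii) at once: if $\vb\in L_n\setminus\{\nub\}$ is \emph{not} $\sqsubseteq$-minimal in $L\setminus\{\nub\}$, any witness $\wb\sqsubset\vb$ with $\wb\in L\setminus\{\nub\}$ is supported on $\supp(\vb)\subseteq I_n$, so $\wb\in L_n\setminus\{\nub\}$ and $\vb$ fails to be $\sqsubseteq$-minimal in $L_n\setminus\{\nub\}$ as well; the reverse implication is immediate from $L_n\subseteq L$. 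Hence the $\sqsubseteq$-minimal elements of $L_n\setminus\{\nub\}$ are exactly $\G\cap\ZZ^{(I_n)}$, which is (ii); and since every $\sqsubseteq$-minimal element of $L\setminus\{\nub\}$ has finite support, $\G=\bigcup_{n\ge1}\bigl(\G\cap\ZZ^{(I_n)}\bigr)=\bigcup_{n\ge1}\G_n$, proving the Graver part of (i).

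For the remaining bases the relevant point is that, although $F_L(\ub)$ is in general strictly larger than $F_{L_m}(\ub)$, one has $F_L(\ub)\cap\ZZ_{\ge0}^{(I_m)}=F_{L_m}(\ub)$ whenever $\ub$ is supported on $I_m$, because a difference $\ub-\vb$ supported on $I_m$ and lying in $L$ lies in $L\cap\ZZ^{(I_m)}=L_m$. For the Markov case, I would fix $\ub\in\ZZ_{\ge0}^{(I)}$ and $\vb,\wb\in F_L(\ub)$, choose $m$ with $\ub,\vb,\wb$ all supported on $I_m$, note $\vb,\wb\in F_{L_m}(\ub)$, and use connectivity of $G(\ub,\B_m)$ over $L_m$ to get a finite path from $\vb$ to $\wb$ whose edges lie in $\pm\B_m\subseteq\pm\B$ and whose vertices lie in $F_{L_m}(\ub)\subseteq F_L(\ub)$; this is a path in $G(\ub,\B)$, so the latter is connected. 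For a Gr\"obner basis with respect to a compatible term order $\prec$: given $\ub$, let $\rb$ be the $\prec$-minimal element of $F_L(\ub)$ and pick $m$ with $\ub,\rb$ supported on $I_m$; since $\rb\in F_{L_m}(\ub)\subseteq F_L(\ub)$ and $\rb$ is already $\prec$-minimal in the larger set, it is the $\prec$-minimal element of $F_{L_m}(\ub)$. The restriction of $\prec$ is a term order on $\ZZ_{\ge0}^{(I_m)}$, so a directed path from $\ub$ to $\rb$ in $G_\prec(\ub,\B_m)$ over $L_m$ exists, and its edges and vertices transfer to $G_\prec(\ub,\B)$ over $L$ verbatim, since the edge relation and the direction criterion $\wb'\prec\vb'$ are unchanged. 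The universal Gr\"obner case follows immediately: for any term order $\prec$ on $\ZZ_{\ge0}^{(I)}$, each $\B_n$ --- being a universal Gr\"obner basis of $L_n$ --- is a Gr\"obner basis of $L_n$ for $\prec$ restricted to $\ZZ_{\ge0}^{(I_n)}$, so the previous paragraph applies.

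I expect the only step needing genuine care to be the non-local behaviour of fibers: $F_L(\ub)$ need not sit inside any $F_{L_m}(\ub)$ --- already for $\ub=\nub$, where $F_L(\nub)=L\cap\ZZ_{\ge0}^{(I)}$ can be far larger than any $L_m\cap\ZZ_{\ge0}^{(I)}$ --- so one cannot naively ``truncate'' a fiber. The resolution is that the definitions of Markov and Gr\"obner bases only ever test finitely many fiber elements at a time (the two endpoints of a connecting path, or the single $\prec$-minimal element), and passing to a large enough level $m$ places exactly those finitely many elements into $F_{L_m}(\ub)=F_L(\ub)\cap\ZZ_{\ge0}^{(I_m)}$; the little lemma that the $\prec$-minimal element of $F_L(\ub)$ remains $\prec$-minimal over $F_{L_m}(\ub)$ holds simply because that element lies in the subset. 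Once these observations are recorded the write-up is essentially bookkeeping.
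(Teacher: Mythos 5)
Your argument is correct and follows essentially the same route as the paper: reduce each verification to a large enough truncation $\ZZ^{(I_m)}$ using finite support, note that $F_{L_m}(\ub)\subseteq F_L(\ub)$ (with equality on $\ZZ_{\ge 0}^{(I_m)}$), and lift the resulting path or minimality statement from $L_m$ to $L$, while for (ii) you observe that $\sqsubseteq$-comparability preserves support containment. The paper only writes out the Gr\"obner case of (i) and leaves the rest to the reader; your write-up fills in those cases with the same underlying idea.
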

	
	\begin{proof}
		(i) We prove the statement for Gr\"{o}bner bases. The other cases can be argued similarly and are left to the reader. For a term order $\prec$ on $\ZZ_{\ge0}^{(I)}$ we denote its restriction on each $\ZZ_{\ge0}^{(I_n)}$ also by $\prec$. Assume that $\B_n$ is a Gr\"{o}bner basis of $L_n$ with respect to $\prec$ for all $n\ge1$. Let $\ub\in\ZZ_{\ge0}^{(I)}$ and let $\vb$ be the unique $\prec$-minimal element of the fiber $F_L(\ub)$. Choose $n$ large enough such that $\ub,\vb\in \ZZ_{\ge0}^{(I_n)}$. Then $\vb\in F_{L_n}(\ub)$, and moreover, $\vb$ is the unique $\prec$-minimal element of $F_{L_n}(\ub)$ since $F_{L_n}(\ub)\subseteq F_L(\ub)$. So there exists a directed path in $G_\prec(\ub,\B_n)$ from $\ub$ to $\vb$. As $\B_n\subseteq \B$, this is also a directed path in $G_\prec(\ub,\B)$. 
		
		(ii) If $\ub\sqsubseteq\vb$ and $\vb\in \ZZ^{(I_n)}$, then $\ub\in \ZZ^{(I_n)}$. So if $\G$ is the set of all $\sqsubseteq$-minimal elements in $L\setminus\{\nub\}$, then each $\G_n=\G\cap L_n$ is the set of all $\sqsubseteq$-minimal elements in $L_n\setminus\{\nub\}$.
	\end{proof}
	
	One might ask whether \Cref{prop_local_global}(ii) also holds for other bases of a lattice. The following example shows that this is not the case for generating sets and Markov bases. In the equivariant setting, however, these bases behave better (see \Cref{stabilization,stabilization-others})
	
	\begin{example}
		Consider the case $I=\NN$ (i.e. $c=d=1$). Then as usual, we write $\ZZ^n$ instead of $\ZZ^{(I_n)}$ for $n\ge1$. 
		Denote by $\{\eb_n\}_{n\in\NN}$ the standard basis of $\ZZ^{(\NN)}$. Let
		\[
		\B_1=\emptyset
		\quad\text{and}\quad 
		\B_{n}=\B_{n-1}\cup\{\eb_{n-1}-2\eb_{n},2\eb_{n}\}
		\ \text{ for }\ n\ge 2.
		\]
		Then $\B=\bigcup_{n\ge 1}\B_n$ generates $\ZZ^{(\NN)}$ since 
		\[
		\eb_n=(\eb_n-2\eb_{n+1})+2\eb_{n+1}
		\ \text{ for }\ n\ge 1.
		\]
		Moreover, $\B$ is also a Markov basis of $\ZZ^{(\NN)}$ because
		\[
		\Ik_\B =\langle x_{n-1}-x_{n}^2,\  x_{n}^2-1\mid n\ge 2\rangle
		= \langle x_{n}-1\mid n\ge 1\rangle
		= \Ik_{\ZZ^{(\NN)}}.
		\]
		On the other hand, it is clear that $\B_n=\B\cap\ZZ^n$ and $\eb_n$ does not belong to the lattice generated by $\B_n$ for all $n\ge 1$. 
		Hence, $\B_n$ is not a generating set of $\ZZ^n=\ZZ^{(\NN)}\cap \ZZ^{n}$ for all $n\ge1$. 
	\end{example}
	
	The next result shows that \Cref{prop_local_global}(ii) can be extended to Gr\"{o}bner bases with respect to the lexicographic order $\prec_{\lex}$ defined in \Cref{orders}. It would be interesting to know whether this also holds for other term orders.
	
	\begin{proposition}
		\label{local_global_Groebner}
		Let $L\subseteq\ZZ^{(I)}$ be a lattice with truncations $L_n=L\cap\ZZ^{(I_n)}$ for $n\ge1$ and let $\B\subseteq L$. Then the following statements are equivalent:
		\begin{enumerate}
			\item 
			$\B$ is a Gr\"{o}bner basis of $L$ with respect to $\prec_{\lex}$;
			\item
			$\B_n=\B\cap\ZZ^{(I_n)}$ is a Gr\"{o}bner basis of $L_n$ with respect to $\prec_{\lex}$ for all $n\ge1$. 
		\end{enumerate}
	\end{proposition}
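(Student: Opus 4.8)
The plan is to prove the equivalence (i)$\Leftrightarrow$(ii) by unwinding the definition of a Gröbner basis (\Cref{def:bases}(iii)) on both sides and exploiting the special structure of $\prec_{\lex}$, namely that it is determined by the \emph{last} nonzero term. The implication (i)$\Rightarrow$(ii) is the easy direction and should follow almost verbatim the argument in the proof of \Cref{prop_local_global}(i): given $\ub\in\ZZ_{\ge0}^{(I_n)}$, its $\prec_{\lex}$-minimal fiber element $\vb$ in $F_{L_n}(\ub)$ needs to be shown to also be the $\prec_{\lex}$-minimal element of $F_L(\ub)$ — this is the one place where some care is needed, since a priori $F_L(\ub)$ is larger than $F_{L_n}(\ub)$ — and then a directed $\prec_{\lex}$-path from $\ub$ to $\vb$ in $G_{\prec_{\lex}}(\ub,\B)$ must be shown to stay inside $\ZZ_{\ge0}^{(I_n)}$, hence to be a path in $G_{\prec_{\lex}}(\ub,\B_n)$. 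The hard part of this direction is precisely the claim that $F_L(\ub)\subseteq\ZZ_{\ge0}^{(I_n)}$ whenever $\ub\in\ZZ_{\ge0}^{(I_n)}$ is \emph{false} in general, so instead I would argue that any $\wb\in F_L(\ub)$ with $\supp(\wb)\not\subseteq I_n$ satisfies $\vb\prec_{\lex}\wb$ (because $\wb$ has a nonzero term at an index outside $I_n$, which under the well-order $\prec$ on the basis comes after every index of $I_n$, so $\lk(\vb-\wb)<0$); therefore the $\prec_{\lex}$-minimal element of $F_L(\ub)$ lies in $\ZZ_{\ge0}^{(I_n)}$ and coincides with the $\prec_{\lex}$-minimal element of $F_{L_n}(\ub)$. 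Once minimality transfers, the path argument is routine.

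For the converse (ii)$\Rightarrow$(i), suppose each $\B_n$ is a $\prec_{\lex}$-Gröbner basis of $L_n$, and let $\ub\in\ZZ_{\ge0}^{(I)}$ be arbitrary. Choose $n$ large enough that $\ub\in\ZZ_{\ge0}^{(I_n)}$. By (ii) there is a directed $\prec_{\lex}$-path in $G_{\prec_{\lex}}(\ub,\B_n)$ from $\ub$ to the $\prec_{\lex}$-minimal element $\vb_n$ of $F_{L_n}(\ub)$. Since $\B_n\subseteq\B$, this is also a path in $G_{\prec_{\lex}}(\ub,\B)$. It remains to check that $\vb_n$ is the $\prec_{\lex}$-minimal element $\vb$ of the \emph{global} fiber $F_L(\ub)$ — and this is exactly the claim established in the previous paragraph: every $\wb\in F_L(\ub)$ with support not contained in $I_n$ is $\succ_{\lex}\vb_n$, so the global minimum is attained inside $F_{L_n}(\ub)$ and equals $\vb_n$. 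Hence the path we produced is a directed $\prec_{\lex}$-path from $\ub$ to the global minimum, which is what (i) demands.

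So in fact both directions hinge on a single lemma-like observation, which I would isolate at the start of the proof: for $\ub\in\ZZ_{\ge0}^{(I_n)}$, the $\prec_{\lex}$-minimal element of $F_L(\ub)$ equals the $\prec_{\lex}$-minimal element of $F_{L_n}(\ub)$, because any fiber element with a nonzero coordinate outside $I_n$ is strictly $\prec_{\lex}$-larger than any fiber element supported on $I_n$. The main obstacle — and the reason the statement is specific to $\prec_{\lex}$ rather than an arbitrary term order — is that this observation genuinely uses the "last-term" nature of $\prec_{\lex}$ together with the fact that the chosen well-order $\prec$ on the basis $B$ puts the "new" indices $I\setminus I_n$ strictly above the "old" ones $I_n$ (this is built into the order $\prec$ of \Cref{orders}, where $\max(\ib)$ governs the comparison). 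For a term order like $\prec_{\rev}$, which looks at the \emph{first} term, or for degree-based orders, bringing in coordinates outside $I_n$ could decrease the norm or the first-term position, so a fiber element with larger support might be $\prec$-\emph{smaller}, and the argument breaks — which is consistent with the remark preceding the proposition that the extension to other term orders is open. Modulo that one observation, which I expect to take a short paragraph, everything else is a direct transcription of the fiber/path bookkeeping already used in \Cref{prop_local_global}.
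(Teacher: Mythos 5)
Your proposal reproduces the paper's argument very closely: both rest on the single observation that for $\ub\in\ZZ_{\ge0}^{(I_n)}$, every vertex $\wb\prec_{\lex}\ub$ on a $\prec_{\lex}$-decreasing path is forced back into $\ZZ_{\ge0}^{(I_n)}$, and hence that the global $\prec_{\lex}$-minimum of $F_L(\ub)$ already lies in $F_{L_n}(\ub)$. The only stylistic difference is that the paper delegates (ii) $\Rightarrow$ (i) to \Cref{prop_local_global} (which works for any compatible term order), while you re-derive it from scratch using the same $\prec_{\lex}$-specific observation; this is a small loss of generality but not an error in itself.

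The genuine problem is that the observation you isolate --- ``the chosen well-order $\prec$ on the basis puts the new indices $I\setminus I_n$ strictly above the old ones $I_n$'' --- is \emph{false} for $c>1$ with the basis order of \Cref{orders}. There the bounded index is compared first: $\eb_{\ib,j}\prec\eb_{\ib',j'}$ if $j<j'$, so already for $d=1$, $c=2$ one has $\eb_{2,1}\prec\eb_{1,2}$, hence $\eb_{2,1}\prec_{\lex}\eb_{1,2}$, even though $(1,2)\in I_1$ while $(2,1)\notin I_1$. Consequently $\wb\prec_{\lex}\ub$ with $\ub\in\ZZ_{\ge0}^{(I_n)}$ does not imply $\wb\in\ZZ_{\ge0}^{(I_n)}$, and the step on which both your proof and the paper's hinge collapses for $c\ge 2$. (Your parenthetical ``where $\max(\ib)$ governs the comparison'' describes only the $c=1$ case.) This is not merely a presentational gap: with $d=1$, $c=2$, take $L=\ZZ(\eb_{1,1}-\eb_{1,2})+\ZZ(\eb_{2,1}-\eb_{1,2})$ and $\B=\{\eb_{2,1}-\eb_{1,2},\ \eb_{2,1}-\eb_{1,1}\}$; one checks (via Buchberger on $\bk(\B)=\{x_{2,1}-x_{1,2},\ x_{2,1}-x_{1,1}\}$, whose leading monomials are $x_{1,2}$ and $x_{2,1}$) that $\B$ is a $\prec_{\lex}$-Gr\"obner basis of $L$, yet $\B_1=\B\cap\ZZ^{(I_1)}=\emptyset$ is not a Gr\"obner basis of $L_1=\ZZ(\eb_{1,1}-\eb_{1,2})\ne\{\nub\}$. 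So (i) $\Rightarrow$ (ii) fails outright for this choice of basis order when $c>1$.

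The argument (yours and the paper's) does work if the well-ordering on $I=\NN^d\times[c]$ is chosen so that each $I_n$ is an initial segment --- for instance, comparing $\max(\ib)$ first, then $j$, then the lexicographic tie-break --- since that is exactly what ``new indices come last'' requires. For $c=1$ (the only case the paper's examples spell out) the order of \Cref{orders} does have this property and both proofs are correct. So you should either restrict the statement to $c=1$, or replace the $c>1$ basis order of \Cref{orders} by one in which the unbounded index dominates, and then your argument goes through verbatim.
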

	
	\begin{proof}
		By \Cref{prop_local_global}, it suffices to prove (i) $\Rightarrow$ (ii). For $\ub\in\ZZ_{\ge0}^{(I_n)}$ let $\vb$ be the unique $\prec_{\lex}$-minimal element of $F_L(\ub)$. Then there is a directed path $P$ in $G_{\prec_{\lex}}(\ub,\B)$ from $\ub$ to $\vb$. For any vertex $\wb$ of $P$ it holds that $\wb\prec_{\lex} \ub$, which yields $\wb\in\ZZ^{(I_n)}$ by the definition of $\prec_{\lex}$. Thus, $\vb$ is the unique $\prec_{\lex}$-minimal element of $F_{L_n}(\ub)$ and $P$ is a path in $G_{\prec_{\lex}}(\ub,\B_n)$. Hence, $\B_n$ is a Gr\"{o}bner basis of $L_n$ with respect to $\prec_{\lex}$.
	\end{proof}
	
	Finally, we record the following local-global result for Hilbert bases of monoids. 
	
	\begin{proposition}
		\label{local_global_Hilbert}
		Consider a monoid $M\subseteq \ZZ_{\ge0}^{(I)}$ and its truncated submonoids $M_n=M\cap \ZZ^{(I_n)}$ for $n\ge1$. Let $\Hc$ be a subset of $M$ and denote $\Hc_n=\Hc\cap M_n$ for $n\ge1$. Then the following statements are equivalent:
		\begin{enumerate}
			\item 
			$\Hc$ is the Hilbert basis $M$;
			\item
			$\Hc_n$ is the Hilbert basis $M_n$ for all $n\ge1$.
		\end{enumerate}
	\end{proposition}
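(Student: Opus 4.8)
The plan is to reduce both statements to the characterization of Hilbert bases by irreducible elements established in \Cref{lem_Hilbert_monoid}(i). This lemma applies verbatim to $M$ and to each truncation $M_n$, since all of these monoids are submonoids of $\ZZ_{\ge0}^{(I)}$. Under this translation, (i) becomes the assertion that $\Hc$ is precisely the set of irreducible elements of $M$, and (ii) becomes the assertion that each $\Hc_n$ is precisely the set of irreducible elements of $M_n$.

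The key observation — and essentially the only point requiring an argument — is that irreducibility is detected locally: for $\ub\in M_n$, the element $\ub$ is irreducible in $M_n$ if and only if it is irreducible in $M$. One direction is trivial since $M_n\subseteq M$. For the other, suppose $\ub=\vb+\wb$ with $\vb,\wb\in M$; because $\vb,\wb\in\ZZ_{\ge0}^{(I)}$ we have $\supp(\vb)\cup\supp(\wb)\subseteq\supp(\ub)\subseteq I_n$, hence $\vb,\wb\in M\cap\ZZ^{(I_n)}=M_n$. Thus a decomposition of $\ub$ in $M$ is already a decomposition in $M_n$, so a reducible element of $M$ lying in $M_n$ is reducible in $M_n$. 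Equivalently, the set of irreducible elements of $M_n$ equals (the set of irreducible elements of $M$) $\cap\, M_n$.

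Granting this, (i) $\Rightarrow$ (ii) is immediate: if $\Hc$ is the set of irreducible elements of $M$, then $\Hc_n=\Hc\cap M_n$ is the set of irreducible elements of $M_n$, which by \Cref{lem_Hilbert_monoid}(i) is the Hilbert basis of $M_n$. For (ii) $\Rightarrow$ (i), I would argue that $\Hc$ is exactly the set of irreducible elements of $M$. Given $\ub\in M$, choose $n$ with $\ub\in M_n$ (possible as $M=\bigcup_{n\ge1}M_n$); then $\ub$ is irreducible in $M$ iff it is irreducible in $M_n$ iff $\ub\in\Hc_n\subseteq\Hc$. Conversely, every $\ub\in\Hc\subseteq M$ lies in some $M_n$, so $\ub\in\Hc\cap M_n=\Hc_n$, which is the set of irreducible elements of $M_n$; hence $\ub$ is irreducible in $M_n$ and therefore in $M$. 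So $\Hc$ coincides with the irreducible elements of $M$, i.e. with its Hilbert basis, again by \Cref{lem_Hilbert_monoid}(i).

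I do not expect a genuine obstacle here; the only things to handle carefully are making the support argument precise (so that decompositions cannot "leak" out of $M_n$) and confirming that every monoid in sight is admissible for \Cref{lem_Hilbert_monoid}, which holds since they are all contained in $\ZZ_{\ge0}^{(I)}$.
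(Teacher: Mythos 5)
Your proof is correct and follows essentially the same route as the paper's: both reduce the statement to \Cref{lem_Hilbert_monoid}(i) via the observation that an element of $M_n$ is irreducible in $M_n$ if and only if it is irreducible in $M$. The paper simply calls this observation ``clear,'' whereas you supply the short support argument (using nonnegativity to see that a decomposition in $M$ cannot leave $M_n$), which is exactly the right justification.
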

	
	\begin{proof}
		For any $n\ge1$, it is clear that $\ub$ is an irreducible of $M_n$ if and only if $\ub\in\ZZ^{(I_n)}$ and $\ub$ is an irreducible of $M$. Therefore, the result follows easily from \Cref{lem_Hilbert_monoid}(i).
	\end{proof}
	
	
	\section{Equivariant lattice bases}
	\label{sec-equi-bases}
	
	We now restrict our attention to the main objects of study in this work, namely, lattices in $\ZZ^{(I)}$ that are invariant under the infinite symmetric group and their equivariant bases. For those lattices and bases, the local-global results presented in the previous section can be substantially improved. 
	Let us begin with symmetric groups and their actions.
	
	\subsection{Symmetric groups}
	\label{sec:Sym}
	
	For $n\in \NN$ let $\Sym(n)$ be the symmetric group on $[n]$.  Identifying $\Sym(n)$ with the stabilizer subgroup of $n+1$ in $\Sym(n+1)$, we obtain an increasing chain of finite symmetric groups
	$$
	\Sym(1)\subset \Sym(2)\subset\cdots\subset \Sym(n)\subset\cdots.
	$$
	The limit of this chain is the infinite symmetric group
	\[
	\Sym(\infty)=\bigcup_{n\geq 1} \Sym(n)
	\]
	that consists of all finite permutations of $\NN$, i.e. permutations that fix all but finitely many elements
	of $\NN$.
	In what follows, for brevity, we will write $\Sym(\infty)$ simply as $\Sym$.
	
	
	\subsection{Sym action}
	\label{sec:Sym-action}
	
	As before, let $I=\NN^d\times[c]$ and $I_n=[n]^d\times[c]$ for $n\in\NN$, where $c$ and $d$ are given positive integers. There is a natural action of $\Sym$ on $\ZZ^{(I)}$, which we now describe. For the sake of clarity, let us first consider the case $c=1$, i.e. $I=\NN^d$. Let $\{\eb_\ib\}_{\ib\in\NN^d}$ denote the standard basis of $\ZZ^{(I)}$. For any $\ib=(i_1,\dots,i_d)\in\NN^d$ and $\sigma\in\Sym$ we define 
	\[
	\sigma(\ib)=(\sigma(i_1),\dots,\sigma(i_d)).
	\]
	Now the action of $\Sym$ on $\ZZ^{(I)}$ is the linear extension of the following action on the basis elements:
	\[
	\sigma(\eb_\ib)=\eb_{\sigma(\ib)}
	\ \text{ for any }  \sigma\in\Sym
	\text{ and } \ib\in\NN^d.
	\]
	So if $\ub=(u_\ib)_{\ib\in\NN^d}=\sum_{\ib\in\NN^d}u_\ib\eb_\ib\in\ZZ^{(I)}$ and $\sigma\in\Sym$, then
	\[
	\sigma(\ub)=\sigma\Big(\sum_{\ib\in\NN^d}u_\ib\eb_\ib\Big)
	=\sum_{\ib\in\NN^d}u_\ib\sigma(\eb_\ib)
	=\sum_{\ib\in\NN^d}u_\ib\eb_{\sigma(\ib)}.
	\]
	When $c>1$, $\Sym$ acts on $\ZZ^{(I)}$ by permuting the \emph{unbounded indices} while keeping the \emph{bounded index} unchanged. More precisely, if $\eb_{\ib,j}$ is a standard basis element of $\ZZ^{(I)}$ with $\ib\in\NN^d$ and $j\in [c]$, then the action of $\Sym$ on $\ZZ^{(I)}$ is defined via
	\[
	\sigma(\eb_{\ib,j})=\eb_{\sigma(\ib),j}
	\ \text{ for any }  \sigma\in\Sym.
	\]
	Observe that this action induces an action of $\Sym(n)$ on $\ZZ^{(I_n)}$ for $n\ge1$.
	
	A subset $A\subseteq\ZZ^{(I)}$ is called \emph{$\Sym$-invariant} if $A$ is stable under the action of $\Sym$, i.e.
	\[
	\Sym(A)\defas\{\sigma(\ub)\mid \sigma\in \Sym,\ \ub\in A\}
	\subseteq A.
	\]
	Similarly, for any $n\ge1$ a subset $A_n\subseteq\ZZ^{(I_n)}$ is \emph{$\Sym(n)$-invariant} if $ \Sym(n)(A_n)\subseteq A_n$. 

	\subsection{Equivariant bases}
	\label{sec:Equivariant-bases}
	
	With motivations from algebraic statistics, one is interested in lattices in $\ZZ^{(I)}$ or $\ZZ^{(I_n)}$ (with $n\ge1$) that are $\Sym$-invariant or $\Sym(n)$-invariant, respectively; see, e.g. \cite{AH07,HM,HS12,KKL}. Such lattices require appropriate notions of bases that reflect the actions of symmetric groups. In \cite{KKL}, equivariant generating sets and equivariant Markov bases for those lattices have been defined. The next definition extends these notions to other bases.
	
	\begin{definition}
	\label{def-equi-bases}
		Let $L\subseteq\ZZ^{(I)}$ (respectively, $L\subseteq\ZZ^{(I_n)}$ with $n\ge1$) be a $\Sym$- (respectively, $\Sym(n)$-)invariant lattice and let $\B\subseteq L$. Then $\B$ is called an \emph{equivariant generating set} (respectively, \emph{equivariant Markov basis}, \emph{equivariant Gr\"{o}bner basis}, \emph{equivariant universal Gr\"{o}bner basis}, \emph{equivariant Graver basis}) of $L$ if $\Sym(\B)$ (respectively, $\Sym(n)(\B)$) is a generating set (respectively, Markov basis, Gr\"{o}bner basis, universal Gr\"{o}bner basis, Graver basis) of $L$.
	\end{definition}
	
	\begin{remark}
	     As pointed out in \cite[Remark 1.1]{KKL}, there is no well-defined notion of an equivariant lattice basis. The reason is that the independence of basis elements is generally not preserved when taking orbits. For instance, among the elements in the $\Sym(2)$-orbit of $(1,-1)\in\ZZ^2$ there is the following nontrivial linear relation:
	     \[
	     (1,-1)+(-1,1)=(0,0).
	     \]
	\end{remark}
	
	Similarly to \Cref{def-equi-bases}, we define:
	
	\begin{definition}
		Let $M\subseteq\ZZ^{(I)}$ (respectively, $M\subseteq\ZZ^{(I_n)}$ with $n\ge1$) be a $\Sym$- (respectively, $\Sym(n)$-)invariant monoid and let $\Hc\subseteq M$. Then $\Hc$ is called an \emph{equivariant Hilbert basis} of $M$ if $\Sym(\Hc)$ (respectively, $\Sym(n)(\Hc)$) is a Hilbert basis of $M$.
	\end{definition}
	
	\begin{example}
		Consider the case $I=\NN^2\times[c]$ with $r\ge1$ (i.e. $d=2$). Let $\eb_{i_1,i_2,j}$ with $i_1,i_2\in\NN$, $j\in[c]$ denote the standard basis elements of $\ZZ^{(I)}$. Then 
		$
		\G=\{\pm \eb_{i_1,i_2,j}\mid i_1,i_2\in\NN, j\in[c]\}
		$ 
		is the Graver basis of $\ZZ^{(I)}$ and
		$
		\Hc=\{\eb_{i_1,i_2,j}\mid i_1,i_2\in\NN, j\in[c]\}
		$ 
		is the Hilbert basis of $\ZZ_{\ge0}^{(I)}$. It follows that
		$
		\G_1=\{\pm \eb_{1,1,j},\ \pm \eb_{1,2,j}\mid j\in[c]\}
		$
		is an equivariant Graver basis of $\ZZ^{(I)}$ and
		$
		\Hc_1=\{\eb_{1,1,j},\ \eb_{1,2,j}\mid j\in[c]\}
		$ 
		is an equivariant Hilbert basis of $\ZZ_{\ge0}^{(I)}$, since $\G=\Sym(\G_1)$ and $\Hc=\Sym(\Hc_1)$. Note that 
		$
		\G_2=\G_1\setminus\{\ub\}\cup\{\sigma(\ub)\}
		$
		and
		$
		\Hc_2=\Hc_1\setminus\{\vb\}\cup\{\sigma(\vb)\}
		$
		for any $\ub\in\G_1,\vb\in\Hc_1$ and $\sigma\in\Sym$ are also an equivariant Graver basis of $\ZZ^{(I)}$ and an equivariant Hilbert basis of $\ZZ_{\ge0}^{(I)}$, respectively. So equivariant Graver bases and equivariant Hilbert bases are not uniquely determined. 
	\end{example}
	
	Concerning equivariant lattice bases, the following problem is of central importance.
	
	\begin{problem}
		\label{main-problem}
		For a $\Sym$-invariant lattice $L\subseteq\ZZ^{(I)}$, decide whether $L$ has a finite {equivariant generating set} (respectively, {Markov basis}, {Gr\"{o}bner basis}, {universal Gr\"{o}bner basis}, {Graver basis}).
	\end{problem}
	
	The remaining part of the paper is devoted to studying this problem. First, let us establish connections between local and global equivariant lattice bases. As seen in the previous section, any lattice in $\ZZ^{(I)}$ is the limit of the chain of its truncations and can therefore be examined using this chain. In what follows, it is convenient to allow more general chains that do not necessarily consist of truncations of a global lattice. By a \emph{chain of lattices} we mean an arbitrary chain $\L=(L_n)_{n\ge1}$ with $L_n$ a lattice in $\ZZ^{(I_n)}$ for all $n\ge1$. This chain is called \emph{$\Sym$-invariant} if the following conditions are satisfied:
	\begin{enumerate}
		\item
		$\L$ is an increasing chain, i.e. $L_m\subseteq L_n$ for all $1\le m\le n$;
		\item
		$L_n$ is $\Sym(n)$-invariant for all $n\ge1.$
	\end{enumerate}
	It is easy to verify that for any $\Sym$-invariant chain $\L=(L_n)_{n\ge1}$, its limit $L=\bigcup_{n\ge1}L_n$ is a $\Sym$-invariant lattice in $\ZZ^{(I)}$. Conversely, if $L\subseteq\ZZ^{(I)}$ is a $\Sym$-invariant lattice, then its truncations $L_n=L\cap\ZZ^{(I_n)}$ form a $\Sym$-invariant chain that is also called the \emph{saturated} chain of $L$. Note that among $\Sym$-invariant chains having the same limit, the saturated chain is the largest one.
	
	Given a $\Sym$-invariant chain of lattices $\L=(L_n)_{n\ge1}$, it holds that
	\[
	\Sym(n)(L_m)\subseteq \Sym(n)(L_n)\subseteq L_n,
	\]
	hence $L_n$ contains the lattice generated by $\Sym(n)(L_m)$ for all $n\ge m\ge 1$. We say that the chain $\L$ \emph{stabilizes} if this containment becomes an equality when $n\ge m\gg0$, i.e. there exists $p\in\NN$ such that
	\[
	L_n=\ZZ\Sym(n)(L_m)
	\ \text{ for all } n\ge m\ge p.
	\]
	This property is interesting because it characterizes the equivariant finite generation of the global lattice, as shown in the next result; see \cite{KLR,LR21} for related results on cones, monoids, and ideals. For $\ub\in\ZZ^{(I)}$ we call the cardinality of $\supp(\ub)$ the \emph{support size} of $\ub$ and denote it by $|\supp(\ub)|$.
	
	\begin{theorem}
		\label{stabilization}
		Let $\L=(L_n)_{n\ge1}$ be a $\Sym$-invariant chain of lattices with limit $L$. Then the following are equivalent:
		\begin{enumerate}
			\item
			$\L$ stabilizes;
			\item
			There exist $p\in\NN$ and an equivariant generating set $\B_p$ of $L_p$ such that $\B_p$ is also an equivariant generating set of $L_n$ for all $n\ge p$;
			\item
			There exist $q, q'\in\NN$ such that for all $n\ge q$ the following hold:
			\begin{enumerate}
				\item
				$ L_{n} = L \cap \ZZ^{(I_n)}$,
				\item
				$L_{n}$ is generated by elements of support size at most $q'$;
			\end{enumerate}
			\item
			$L$ has a finite equivariant generating set.
		\end{enumerate}
	\end{theorem}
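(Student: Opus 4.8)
The plan is to prove the cyclic chain of implications (iii) $\Rightarrow$ (i) $\Rightarrow$ (ii) $\Rightarrow$ (iv) $\Rightarrow$ (iii). Here (i) $\Rightarrow$ (ii) and (ii) $\Rightarrow$ (iv) are routine manipulations with $\ZZ$-spans and direct limits, (iii) $\Rightarrow$ (i) is an elementary ``folding'' argument, and (iv) $\Rightarrow$ (iii) is the substantial step, which will have to call on Noetherianity up to symmetry.

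For (iii) $\Rightarrow$ (i) I would set $p=\max(q,q'd)$ and fix $n\ge m\ge p$. By (iii)(b) it suffices to show that every $\wb\in L_n$ with $|\supp(\wb)|\le q'$ lies in $\ZZ\Sym(n)(L_m)$. The unbounded coordinates occurring in $\supp(\wb)$ form a set $V\subseteq[n]$ with $|V|\le q'd\le p$; picking an injection $V\hookrightarrow[p]$ and extending it to some $\widetilde{\rho}\in\Sym(n)$, one gets $\widetilde{\rho}(\wb)\in L\cap\ZZ^{(I_p)}$, which equals $L_p$ by (iii)(a) and is contained in $L_m$. Hence $\wb=\widetilde{\rho}^{-1}(\widetilde{\rho}(\wb))\in\Sym(n)(L_m)$, so $L_n\subseteq\ZZ\Sym(n)(L_m)$; the reverse inclusion is clear since $L_n$ is a $\Sym(n)$-invariant lattice containing $L_m$. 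For (i) $\Rightarrow$ (ii): if $\L$ stabilizes at $p$, choose a finite generating set $\B_p$ of the finitely generated abelian group $L_p$; by linearity of the $\Sym(n)$-action, $L_n=\ZZ\Sym(n)(L_p)=\ZZ\Sym(n)(\B_p)$ for all $n\ge p$, so $\B_p$ is an equivariant generating set of each $L_n$. For (ii) $\Rightarrow$ (iv): that $\B_p$ is finite, lies in $L$, and satisfies $L=\bigcup_{n\ge p}L_n=\bigcup_{n\ge p}\ZZ\Sym(n)(\B_p)=\ZZ\Sym(\B_p)$, the last equality because a finite $\ZZ$-combination only involves permutations from a single $\Sym(n)$.

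For (iv) $\Rightarrow$ (iii), let $\B$ be a finite equivariant generating set of $L$ and fix $c_0$ large enough that $\supp(\bb)\subseteq I_{c_0}$ for all $\bb\in\B$ and $\B\subseteq L_{c_0}$. Put $q'=\max_{\bb\in\B}|\supp(\bb)|$ and $M_n=\ZZ\Sym(n)(\B)$ for $n\ge c_0$, so that $M_n\subseteq L_n\subseteq L\cap\ZZ^{(I_n)}$. The implication reduces to the claim that $M_n=L\cap\ZZ^{(I_n)}$ for all $n\gg 0$: for such $n$ the sublattice $L_n$ is then squeezed between $M_n$ and $L\cap\ZZ^{(I_n)}$, giving (iii)(a), and $L_n=M_n$ is generated by $\Sym(n)(\B)$, whose members have support size at most $q'$, giving (iii)(b). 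To prove the claim I would consider the two sequences $n\mapsto L\cap\ZZ^{(I_n)}$ and $n\mapsto M_n$. Every injection $[m]\hookrightarrow[n]$ acts on $\ZZ^{(I_m)}$ by relabelling the unbounded indices, and $\Sym$-invariance of $L$ (respectively the definition of $M_\bullet$) shows that this relabelling carries $L\cap\ZZ^{(I_m)}$ into $L\cap\ZZ^{(I_n)}$ (respectively $M_m$ into $M_n$); thus both sequences are finitely generated chains in $\ZZ^{(I_\bullet)}$, viewed as an FI-module over $\ZZ$ that is finitely generated in degree $d$ by the basis vectors indexed by $[d]^d\times[c]$. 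Noetherianity up to symmetry for $\ZZ^{(I)}$ — obtainable from the Noetherianity of finitely generated FI-modules over $\ZZ$, or directly from Higman's lemma in the spirit of the local--global theory used here — then gives that $n\mapsto L\cap\ZZ^{(I_n)}$ is finitely generated, hence so is the quotient chain $P_\bullet=(L\cap\ZZ^{(I_\bullet)})/M_\bullet$. Its direct limit is $L/L=0$, since $L=\bigcup_n(L\cap\ZZ^{(I_n)})=\bigcup_nM_n$; and a finitely generated chain of this type with vanishing limit vanishes in all sufficiently high degrees, because a finite generating set sits in bounded degree, each generator already dies in bounded degree, and functoriality propagates this upward. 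Hence $P_n=0$, i.e. $M_n=L\cap\ZZ^{(I_n)}$, for $n\gg 0$.

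The step I expect to be the main obstacle is exactly this last claim: that once $L$ is generated up to symmetry by a finite set $\B$, the spans $\ZZ\Sym(n)(\B)$ eventually catch up with the full truncations $L\cap\ZZ^{(I_n)}$. This is a genuine finiteness (Noetherianity) phenomenon rather than a formal one — the naive folding trick only removes an out-of-range index from a given representation when $[n]$ is not already exhausted by that representation, and breaks down in general — so it is here that the equivariant Noetherianity machinery, and ultimately Higman's lemma, is unavoidable. Everything else is bookkeeping with supports and permutations.
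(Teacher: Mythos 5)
Your cyclic chain of implications is the same one the paper proves (entered at a different vertex), and your arguments for (iii) $\Rightarrow$ (i), (i) $\Rightarrow$ (ii) and (ii) $\Rightarrow$ (iv) essentially match the paper's; in particular, your direct embedding of the at most $q'd$ unbounded support indices into $[p]$ is a harmless simplification of \Cref{reduce_width}. The genuine divergence is (iv) $\Rightarrow$ (iii), where you pass to FI-modules, invoke Noetherianity of finitely generated FI-modules over $\ZZ$ to conclude that $n\mapsto L\cap\ZZ^{(I_n)}$ is a finitely generated chain, and then observe that a finitely generated quotient chain with vanishing colimit must vanish in high degrees. That argument is sound, but your closing assertion --- that ``equivariant Noetherianity machinery, and ultimately Higman's lemma, is unavoidable'' here, because the folding trick ``breaks down in general'' --- is wrong. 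The paper proves (iv) $\Rightarrow$ (iii) by a purely combinatorial permutation-stitching device, \Cref{pull_permutation}: given $\ub=\sum_{k=1}^h z_k\sigma_k(\bb_k)\in L\cap\ZZ^{(I_n)}$ with $\B=\{\bb_1,\dots,\bb_h\}\subseteq\ZZ^{(I_m)}$ and $n\ge hm+1$, one constructs $\sigma\in\Sym$ fixing all indices of $\supp(\ub)$ and $\tau_1,\dots,\tau_h\in\Sym(n)$ with $\sigma\circ\sigma_k\restr{[m]}=\tau_k\restr{[m]}$, so that $\ub=\sigma(\ub)=\sum_k z_k\tau_k(\bb_k)\in\ZZ\Sym(n)(\B)$. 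This is a refined fold that handles all $h$ permutations simultaneously while protecting $\supp(\ub)$, and the slack $n\ge hm+1$ is exactly enough; no Higman-type input is needed. The paper thereby gets an explicit bound $q=hm+1$ and keeps \Cref{stabilization} logically independent of equivariant Noetherianity of $\ZZ^{(I)}$, which it establishes separately later (\Cref{Finiteness-generating}, via Hillar and Mart\'in del Campo). Your route is valid but imports a considerably heavier external theorem than the implication requires, and it is precisely the elementary alternative that your final paragraph incorrectly rules out.
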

	
	The proof of this result requires some preparations. The following lemma is used to reduce the \emph{width} of the support of an equivariant generating set.
	
	\begin{lemma}
		\label{reduce_width}
		Let $S\subseteq I_n=[n]^d\times[c]$ with $|S|=m$. Suppose that $n\ge p=dm+1$. Then there exists $\sigma\in\Sym(n)$ such that $\sigma(S)\subseteq I_p=[p]^d\times[c].$ 
	\end{lemma}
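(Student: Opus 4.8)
The plan is to reduce the statement to a purely combinatorial fact about finite permutations: since each element of $S$ contributes at most $d$ entries from $[n]$ to its unbounded part, the set of all unbounded coordinates occurring among the elements of $S$ is small, and a single permutation of $[n]$ can push this small set into the initial segment $[p]$.

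First I would record the coordinates involved. Write each element of $S$ as $(\ib,j)$ with $\ib=(i_1,\dots,i_d)\in[n]^d$ and $j\in[c]$, and set
\[
T=\{\,i_\ell \mid (\ib,j)\in S \text{ for some } j\in[c],\ 1\le\ell\le d\,\}\subseteq[n].
\]
Since $|S|=m$ and each element of $S$ contributes at most $d$ coordinates to $T$, we have $|T|\le dm=p-1<p$.

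Next, because $n\ge p$ and $|T|<p\le n$, I can choose an injection $T\hookrightarrow[p]$ and extend it by an arbitrary bijection from $[n]\setminus T$ onto the complement of its image (the two sets have the same cardinality $n-|T|$), obtaining a permutation $\sigma\in\Sym(n)$ with $\sigma(T)\subseteq[p]$. Finally, by the definition of the $\Sym$-action on $\ZZ^{(I)}$ we have $\sigma(\ib,j)=(\sigma(i_1),\dots,\sigma(i_d),j)$ for each $(\ib,j)\in S$, and each $\sigma(i_\ell)$ lies in $[p]$ because $i_\ell\in T$; hence $\sigma(S)\subseteq[p]^d\times[c]=I_p$, as desired.

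The argument has essentially no genuine obstacle: the only point that must be gotten right is the counting bound $|T|\le dm$, which is precisely what forces the hypothesis $n\ge dm+1$, together with the routine observation that a partial injection between subsets of $[n]$ of equal-sized complements extends to a full permutation of $[n]$.
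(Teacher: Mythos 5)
Your proof is correct, and it rests on exactly the same key observation as the paper's: the set $T$ of unbounded coordinates appearing in $S$ satisfies $|T|\le dm<p$, so it can be pushed into $[p]$. Where you differ is in how the permutation is produced. The paper builds $\sigma$ inductively as a product of transpositions, arguing on the size of $T\setminus[p]$ and swapping one offending coordinate at a time into $[p]$. You instead construct $\sigma$ in one step by choosing any injection $T\hookrightarrow[p]$ and extending it to a bijection of $[n]$ using the equal-cardinality complements. Your construction is arguably cleaner and more direct for this lemma; the paper's choice of the transposition/induction machinery is most likely made because the same template is reused in the proof of the companion result (\Cref{pull_permutation}), where the permutation must additionally fix a prescribed set $D$ and simultaneously align several $\sigma_j$ with elements of $\Sym(n)$ --- constraints that make the stepwise transposition argument more convenient than a one-shot extension. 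Either way, the counting bound $|T|\le dm$ and the extension of a partial injection on $[n]$ to a full permutation are the only substantive ingredients, and you have both.
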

	
	\begin{proof}
		Denote
		\[
		T_S=\bigcup\{\{i_1,\dots,i_d\}\mid (i_1,\dots,i_d,j)\in S
		\ \text{ for some } j\in [c]\}\subseteq [n].
		\]
		We argue by induction on the cardinality $t$ of the set $T_S\setminus[p].$ If $t=0$, then $T_S\subseteq [p]$, and we can simply take $\sigma=\id_{[n]}$, the identity map of $[n]$.
		Now consider the case $t>0$. Choose an element $k\in T_S\setminus[p].$ Since
		\[
		|T_S| \le d|S|\le dm<p,
		\]
		there must exist an $l\in [p]\setminus T_S.$ Let $\pi\in\Sym(n)$ be the transposition that swaps $k$ and $l$. 
		It is easy to see that
		\[
		T_{\pi(S)}\setminus[p]
		=T_S\setminus[p]-\{k\}.
		\]
		So by the induction hypothesis, there exists $\tau\in\Sym(n)$ such that $\tau(\pi(S))\subseteq I_p.$ The proof is completed by taking $\sigma=\tau\circ\pi$.
	\end{proof}
	
	We also need the following technical but useful observation.
	
	\begin{lemma}
		\label{pull_permutation}
		Let $\sigma_1,\dots,\sigma_h\in \Sym$ and let $m,n\in\NN$. Denote 
		\[
		D=\bigcup_{j=1}^h\sigma_j([m])\cap [n].
		\]
		Suppose $n\ge hm+1$. Then there exist $\sigma\in\Sym$ and $\tau_1,\dots,\tau_h\in \Sym(n)$ such that
		\[
		\sigma\restr{D}=\id_D
		\ \text{ and }\ 
		\sigma\circ\sigma_j\restr{[m]}=\tau_j\restr{[m]}
		\ \text{ for all } j\in[h],
		\]
		where $\sigma\restr{D}$ denotes the restriction of $\sigma$ on $D$.
	\end{lemma}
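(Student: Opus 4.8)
The idea is to construct $\sigma$ by hand as a product of disjoint transpositions that pushes the (finitely many) elements of $\bigcup_{j=1}^h\sigma_j([m])$ lying outside $[n]$ back into $[n]$, while leaving $D$ untouched. The only real input is a counting argument giving enough room in $[n]\setminus D$, and that is exactly where the hypothesis $n\ge hm+1$ enters.

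Concretely, I would first set $E=\bigcup_{j=1}^h\sigma_j([m])$, so that $D=E\cap[n]$ and $|E|\le hm$. Write $E\setminus[n]=\{k_1,\dots,k_t\}$; then $E=D\cup\{k_1,\dots,k_t\}$ is a disjoint union and $t\le hm-|D|$. Since $|[n]\setminus D|=n-|D|\ge hm+1-|D|>t$, one can choose pairwise distinct $l_1,\dots,l_t\in[n]\setminus D$. Because each $k_i$ lies outside $[n]$ while each $l_i$ lies inside $[n]$, the $2t$ numbers $k_1,\dots,k_t,l_1,\dots,l_t$ are pairwise distinct, so $\sigma\defas(k_1\,l_1)(k_2\,l_2)\cdots(k_t\,l_t)$ (with $\sigma\defas\id$ if $t=0$) is a well-defined element of $\Sym$.

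It then remains to check the two assertions. As $D$ is disjoint from $\{k_1,\dots,k_t\}$ (it lies in $[n]$) and from $\{l_1,\dots,l_t\}$ (chosen in $[n]\setminus D$), the permutation $\sigma$ fixes $D$ pointwise, i.e. $\sigma\restr{D}=\id_D$. Moreover $\sigma(E)=\sigma(D)\cup\sigma(\{k_1,\dots,k_t\})=D\cup\{l_1,\dots,l_t\}\subseteq[n]$, and since $\sigma_j([m])\subseteq E$ we get $\sigma(\sigma_j([m]))\subseteq[n]$ for every $j\in[h]$. Thus $\sigma\circ\sigma_j\restr{[m]}$ is an injection from $[m]$ into $[n]$ (note $n\ge hm+1>m$), which extends to a permutation $\tau_j\in\Sym(n)$; these $\sigma$ and $\tau_1,\dots,\tau_h$ are as required. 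The only point needing care is that the transpositions be genuinely disjoint, so that $\sigma$ acts as described — but that is automatic from $k_i\notin[n]\ni l_j$ — so there is no real obstacle beyond the elementary counting inequality.
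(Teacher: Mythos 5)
Your proof is correct and uses the same underlying idea as the paper's: there are fewer than $n$ elements in $\bigcup_j\sigma_j([m])$, so there is always room in $[n]$ to swap the out-of-range images back in without disturbing $D$. The only difference is presentational — the paper introduces the transpositions one at a time by induction on $|\bigcup_j\sigma_j([m])\setminus[n]|$, whereas you construct the product $\sigma=(k_1\,l_1)\cdots(k_t\,l_t)$ in a single step. Your "unrolled" version is arguably cleaner; note also that your choice set $[n]\setminus D$ coincides with the paper's $[n]\setminus T(\sigma_1,\dots,\sigma_h)$, since the elements of $T\setminus D$ all lie outside $[n]$, so the two constructions produce the same class of $\sigma$'s.
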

	
	\begin{proof}
		The argument is similar to that of the previous lemma. Let 
		\[
		T(\sigma_1,\dots,\sigma_h)=\bigcup_{j=1}^h\sigma_j([m]).
		\]
		We proceed by induction on $t=|T(\sigma_1,\dots,\sigma_h)\setminus[n]|.$ If $t=0$, then $\sigma_j([m])\subseteq [n]$ for all $j\in[h].$ Consequently, there exist $\tau_1,\dots,\tau_h\in \Sym(n)$ such that $\tau_j\restr{[m]}=\sigma_j\restr{[m]}$ for all $j\in[h]$, and we are done by choosing $\sigma=\id_{\NN}.$
		
		Now assume $t>0$. Choose an element $k\in T(\sigma_1,\dots,\sigma_h)\setminus[n].$ Since
		\[
		|T(\sigma_1,\dots,\sigma_h)|\le \sum_{j=1}^h|\sigma_j([m])|
		=hm< n,
		\]
		there exists $l\in [n]\setminus T(\sigma_1,\dots,\sigma_h).$ Let $\pi\in\Sym$ be the transposition that swaps $k$ and $l$. Put $\sigma_j'=\pi\circ\sigma_j$ for $j\in[h]$.
		Then for any $i\in [m]$ and $j\in[h]$ one has
		\[
		\sigma_j'(i)=\pi(\sigma_j(i))
		=\begin{cases}
			l&\text{if } \sigma_j(i) =k,\\
			\sigma_j(i)&\text{otherwise}.
		\end{cases}
		\]
		It follows that $D':=\bigcup_{j=1}^h\sigma_j'([m])\cap [n]=D\cup\{l\}.$ Moreover, 
		\[
		T(\sigma_1',\dots,\sigma_h')\setminus[n]
		=T(\sigma_1,\dots,\sigma_h)\setminus[n]-\{k\}.
		\]
		So by the induction hypothesis, there exist $\rho\in\Sym$ and $\tau_1,\dots,\tau_h\in \Sym(n)$ such that
		\[
		\rho\restr{D'}=\id_{D'}
		\ \text{ and }\ 
		\rho\circ\sigma_j'\restr{[m]}=\tau_j\restr{[m]}
		\ \text{ for all } j\in[h].
		\]
		Letting $\sigma=\rho\circ\pi$ it is easy to check that $\sigma$ and $\tau_1,\dots,\tau_h$ fulfill the required condition.
	\end{proof}
	
	We are now ready to prove \Cref{stabilization}.
	
	\begin{proof}[Proof of \Cref{stabilization}]
		We will show (i) $\Rightarrow$ (ii) $\Rightarrow$ (iv) $\Rightarrow$ (iii) $\Rightarrow$ (i).
		
		(i) $\Rightarrow$ (ii): When $\L$ stabilizes, we can choose $p\in\NN$ so that $L_n$ is generated by $\Sym(n)(L_p)$ for all $n\ge p$. Thus if $\B_p$ is any equivariant generating set of $L_p$, then $\Sym(n)(\Sym(p)(\B_p))$ generates $L_n$ for all $n\ge p$. Since $\Sym(n)(\Sym(p)(\B_p))=\Sym(n)(\B_p)$, we are done.
		
		(ii) $\Rightarrow$ (iv): Choose $p$ and $\B_p$ as in (ii). We may assume that $\B_p$ is finite because $L_p$ is finitely generated. Since $L=\bigcup_{n\ge1}L_n$, it is easily seen that $\B_p$ is an equivariant generating set of $L$.
		
		(iv) $\Rightarrow$ (iii): Suppose $\B=\{\bb_1,\dots,\bb_h\}$ is a finite equivariant generating set of $L$. Since $L=\bigcup_{n\ge1}L_n$, there exists $m\in\NN$ such that $\B\subseteq L_m\subseteq\ZZ^{(I_m)}$. Thus, each element of $\B$ has support size at most $|I_m|=m^dc$. The $\Sym(n)$-invariance of $L_n$ yields
		\[
		\ZZ\Sym(n)(\B)\subseteq L_n \subseteq L \cap \ZZ^{(I_n)}
		\ \text{ for all } n\ge m.
		\]
		Observe that the action of $\Sym(n)$ does not alter the support size of any element of $\ZZ^{(I_n)}$. So it suffices to prove the existence of some $q\ge m$ such that
		\[
		L \cap \ZZ^{(I_n)}=\ZZ\Sym(n)(\B)
		\ \text{ for all } n\ge q.
		\]
		We show that $q=hm+1$ is such a number. Indeed, let $n\ge q$ and take any $\ub\in L \cap \ZZ^{(I_n)}$. Since $L =\ZZ\Sym(\B)$, there exist $z_1,\dots,z_h\in \ZZ$ and $\sigma_1,\dots,\sigma_h\in \Sym$ such that
		$$
		\ub=\sum_{k=1}^hz_k\sigma_k(\bb_k).
		$$
		Using \Cref{pull_permutation} we find $\sigma\in\Sym$ and $\tau_1,\dots,\tau_h\in \Sym(n)$ satisfying
		\[
		\sigma\restr{D}=\id_D
		\ \text{ and }\ 
		\sigma\circ\sigma_k\restr{[m]}=\tau_k\restr{[m]}
		\ \text{ for all } k\in[h],
		\]
		where $D=\bigcup_{k=1}^h\sigma_k([m])\cap [n]$. Evidently, if $\ib=(i_1,\dots,i_d,j)\in \supp(\ub)$, then $i_l\in D$ for all $l\in[d].$ So from $\sigma\restr{D}=\id_D$ it follows that $\sigma(\ub)=\ub$. Similarly, from $\sigma\circ\sigma_k\restr{[m]}=\tau_k\restr{[m]}$ we get $\sigma\circ\sigma_k(\bb_k)=\tau_k(\bb_k)$ for all $k\in[h].$ Hence
		\[
		\ub=\sigma(\ub)=\sum_{k=1}^hz_k\sigma\circ\sigma_k(\bb_k)
		=\sum_{k=1}^hz_k\tau_k(\bb_k),
		\]
		and therefore, $\ub\in \ZZ\Sym(n)(\B)$.
		
		(iii) $\Rightarrow$ (i): Denote $p=\max\{q,dq'+1\}$ and let $n\ge p$. Then $L_n$ has a generating set $\B_n$ that consists of elements of support size at most $q'$. Take any $\bb\in\B_n$ and set $S=\supp(\bb)$. Since $|S|\le q'$, there exists $\sigma\in\Sym(n)$ such that $\sigma(S)\subseteq I_p$ by virtue of \Cref{reduce_width}. This implies $\sigma(\bb)\in \ZZ^{(I_p)}\cap L=L_p.$ In other words, $\bb=\sigma^{-1}(\bb')$ for some $\bb'\in L_p.$ Hence, we can find a subset $\B_n'\subseteq L_p$ such that $\B_n\subseteq \Sym(n)(\B_n')$. From
		\[
		\ZZ\B_n\subseteq \ZZ\Sym(n)(\B_n')
		\subseteq \ZZ\Sym(n)(L_p) \subseteq L_n
		\]
		and $L_n=\ZZ\B_n$, it follows that $L_n=\ZZ\Sym(n)(L_p)$ for all $n\ge p$. That is, $\L$ stabilizes.
	\end{proof}
	
	As we will see in \Cref{Finiteness-generating}, every $\Sym$-invariant lattice in $\ZZ^{(I)}$ is equivariantly finitely generated. Hence, the statements (i), (ii), (iii) in \Cref{stabilization} always hold true.
	
	Let us now discuss the stabilization of other equivariant lattice bases. The next definition is motivated by \Cref{stabilization}.
	
	\begin{definition}
		Let $\L=(L_n)_{n\ge1}$ be a $\Sym$-invariant chain of lattices. We say that $\L$ \emph{Markov-} (respectively, \emph{Gr\"{o}bner-}, \emph{universal Gr\"{o}bner-}, \emph{Graver-})\emph{stabilizes} if there exist $p\in\NN$ and an equivariant Markov (respectively, Gr\"{o}bner, universal Gr\"{o}bner, Graver) basis $\B_p$ of $L_p$ such that $\B_p$ is also an equivariant Markov (respectively, Gr\"{o}bner, universal Gr\"{o}bner, Graver) basis of $L_n$ for all $n\ge p$.
	\end{definition}
	
	\begin{remark}
		\label{relationship}
		\Cref{implications} yields the following relationship among different kinds of stabilization of a $\Sym$-invariant chain of lattices:
		\begin{align*}
			\text{Graver-stabilization } &\Rightarrow   \text{ universal Gr\"{o}bner-stabilization } 
			\Rightarrow   \text{ Gr\"{o}bner-stabilization} \\
			&\Rightarrow   \text{ Markov-stabilization } 
			\Rightarrow   \text{ stabilization.} 
		\end{align*}
	\end{remark}
	
	For equivariant Markov and Graver bases, \Cref{stabilization} can be extended as follows. 
	
	\begin{theorem}
		\label{stabilization-others}
		Let $\L=(L_n)_{n\ge1}$ be a $\Sym$-invariant chain of lattices with limit $L$. Then the following are equivalent:
		\begin{enumerate}
			\item
			$\L$ Markov- (respectively, Graver-)stabilizes;
			\item
			There exist $q, q'\in\NN$ such that for all $n\ge q$ the following hold:
			\begin{enumerate}
				\item
				$L_{n} = L \cap \ZZ^{(I_n)}$,
				\item
				$L_{n}$ has a Markov (respectively, Graver) basis consisting of elements of support size at most $q'$;
			\end{enumerate}
			\item
			$L$ has a finite equivariant Markov (respectively, Graver) basis.
		\end{enumerate}
	\end{theorem}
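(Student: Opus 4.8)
The plan is to prove the chain of implications (i) $\Rightarrow$ (iii) $\Rightarrow$ (ii) $\Rightarrow$ (i), mirroring the structure used for \Cref{stabilization} but upgrading the tools to handle Markov and Graver bases rather than generating sets. The case distinction between ``Markov'' and ``Graver'' should be treated uniformly: in both cases the relevant basis is characterized by local conditions (connectivity of fibers $G(\ub,\B)$ for Markov; $\sqsubseteq$-minimality for Graver), and crucially both conditions are compatible with passing between $\ZZ^{(I_n)}$ and $\ZZ^{(I)}$ in the way already exploited in \Cref{prop_local_global}.

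For (i) $\Rightarrow$ (iii): if $\B_p$ is an equivariant Markov (resp.\ Graver) basis of $L_p$ that remains one for all $L_n$ with $n\ge p$, then since $L=\bigcup_{n\ge1}L_n$ and $\B_p$ is finite (the local basis of a finite-rank lattice may be taken finite — for Graver this is because $L_p$ has a finite Graver basis in the classical sense), one checks directly that $\Sym(\B_p)$ is a Markov (resp.\ Graver) basis of $L$: any fiber $F_L(\ub)$ and any two of its elements lie in some $\ZZ_{\ge0}^{(I_n)}$, and connectivity there gives connectivity in $L$; for Graver one uses that $\sqsubseteq$-minimality in $L$ is detected in any large enough truncation, exactly as in \Cref{prop_local_global}(ii). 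So $\B_p$ is a finite equivariant Markov (resp.\ Graver) basis of $L$.

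For (iii) $\Rightarrow$ (ii): given a finite equivariant Markov (resp.\ Graver) basis $\B=\{\bb_1,\dots,\bb_h\}$ of $L$, choose $m$ with $\B\subseteq L\cap\ZZ^{(I_m)}$, so each $\bb_k$ has support size at most $m^dc=:q'$. The content is to produce $q$ such that for $n\ge q$, first $L_n=L\cap\ZZ^{(I_n)}$ and second $\Sym(n)(\B)$ is already a Markov (resp.\ Graver) basis of $L\cap\ZZ^{(I_n)}$. The equality $L_n = L\cap\ZZ^{(I_n)}$ for large $n$ follows from \Cref{stabilization} (whose equivalent condition (iv), a finite equivariant generating set, is implied by having a finite equivariant Markov or Graver basis via \Cref{implications} and the obvious fact that an equivariant Markov/Graver basis is an equivariant generating set). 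For the second part, the Markov case: take $\ub\in L\cap\ZZ^{(I_n)}$ and $\vb,\wb\in F_L(\ub)$; there is a path from $\vb$ to $\wb$ using steps in $\pm\Sym(\B)$, i.e.\ finitely many permutations $\sigma_1,\dots,\sigma_N$ applied to elements of $\B$; apply \Cref{pull_permutation} (with $[m]$ replaced by an index set large enough to contain the supports of all $\bb_k$ and of $\ub$, and $h$ replaced by $N$) to find $\sigma\in\Sym$ fixing the relevant indices pointwise and $\tau_j\in\Sym(n)$ with $\sigma\circ\sigma_j = \tau_j$ on the support of $\bb_j$; then applying $\sigma$ to the entire path fixes $\ub$, $\vb$, $\wb$ and converts each step into a step via $\pm\Sym(n)(\B)$, whence $G(\ub,\Sym(n)(\B))$ is connected. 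The Graver case is similar but easier: if $\ub\in L\cap\ZZ^{(I_n)}$ is $\sqsubseteq$-minimal in $(L\cap\ZZ^{(I_n)})\setminus\{\nub\}$, it is $\sqsubseteq$-minimal in $L\setminus\{\nub\}$ (a smaller element would lie in $\ZZ^{(I_n)}$ by definition of $\sqsubseteq$), so $\ub=\sigma(\bb_k)$ for some $k$ and $\sigma\in\Sym$; since $\supp(\ub)\subseteq I_n$, \Cref{reduce_width} produces $\tau\in\Sym(n)$ with $\tau(\supp(\bb_k))\subseteq I_n$, so after adjusting we may take $\sigma\in\Sym(n)$, giving $\ub\in\Sym(n)(\B)$; conversely every element of $\Sym(n)(\B)$ is $\sqsubseteq$-minimal in $L\setminus\{\nub\}$ hence in the truncation. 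Taking $q$ to be the maximum of $m$, the bound from \Cref{stabilization}, and an appropriate multiple of $m$ dictated by \Cref{reduce_width} and \Cref{pull_permutation} finishes this implication.

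Finally (ii) $\Rightarrow$ (i): for $n\ge p:=\max\{q, dq'+1\}$, pick a Markov (resp.\ Graver) basis $\B_n$ of $L_n=L\cap\ZZ^{(I_n)}$ with all elements of support size $\le q'$; by \Cref{reduce_width} each $\bb\in\B_n$ is $\sigma^{-1}(\bb')$ for some $\sigma\in\Sym(n)$ and $\bb'\in L\cap\ZZ^{(I_p)}=L_p$, so there is $\B_n'\subseteq L_p$ with $\B_n\subseteq\Sym(n)(\B_n')$; then the $\Sym(p)$-orbit closure of $\bigcup_n \B_n'$ can be trimmed to a fixed finite equivariant Markov (resp.\ Graver) basis $\B_p$ of $L_p$ — here one invokes that $L_p$, being a finite-rank lattice, has an honest finite Markov and Graver basis — and verifies that $\Sym(n)(\B_p)$ contains $\B_n$ up to $\Sym(n)$-action and hence is a Markov (resp.\ Graver) basis of $L_n$ for every $n\ge p$, which is precisely Markov- (resp.\ Graver-)stabilization.

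The main obstacle is the ``orbit-pulling'' step inside (iii) $\Rightarrow$ (ii) in the Markov case: one must simultaneously straighten the unboundedly many permutations occurring along a connecting path while keeping $\ub$, $\vb$, $\wb$ fixed and landing all intermediate vertices inside $\ZZ_{\ge0}^{(I_n)}$. The resolution is to apply \Cref{pull_permutation} not to the generators of $\B$ alone but to the whole finite list of group elements appearing in the path together with $\ub$; one should double-check that after applying the global $\sigma$, every vertex of the transformed path is still a nonnegative-support element of $\ZZ^{(I_n)}$ — this uses that $\sigma$ fixes $D$ pointwise and that the path vertices differ from $\ub$ by bounded-support lattice elements, so they too are supported on $D \cup \Sym(n)(\text{supports of }\bb_k)\subseteq I_n$. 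Everything else is a routine adaptation of the proof of \Cref{stabilization}.
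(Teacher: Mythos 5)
Your overall structure (i) $\Rightarrow$ (iii) $\Rightarrow$ (ii) $\Rightarrow$ (i) matches the paper's, and the implications (i) $\Rightarrow$ (iii) and (ii) $\Rightarrow$ (i), as well as the Graver case of (iii) $\Rightarrow$ (ii), are essentially on the right track (modulo some sloppiness: in (ii) $\Rightarrow$ (i) the ``trim to finite'' step is both unjustified and unnecessary, since the definition of Markov-stabilization does not demand a finite $\B_p$ --- the paper simply takes the possibly infinite union $\bigcup_n \B_n'$; and finiteness is recovered separately in (i) $\Rightarrow$ (iii)). However, the Markov case of (iii) $\Rightarrow$ (ii) has a genuine gap, which you yourself flag but do not resolve.

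The problem is your plan to apply \Cref{pull_permutation} ``to the whole finite list of group elements appearing in the path together with $\ub$.'' To fix $\supp(\ub)$ in this way, you would need the $m$ of \Cref{pull_permutation} to be at least as large as $n$ (since $\supp(\ub)$ can meet every slot of $[n]^d$), and then the hypothesis $n\ge hm+1$ is unsatisfiable. Moreover, the $\sigma$ supplied by \Cref{pull_permutation} only fixes the set $D=\bigcup_j\sigma_j([m])\cap[n]$ pointwise, and there is no reason for $\supp(\ub)$ to lie in $D$; this is not a detail to ``double-check'' but a failure of the proposed mechanism. The paper circumvents this by first making the reduction (harmless by the usual shift-by-componentwise-minimum argument, which still works in this infinite setting) to the case where $\vb$ and $\wb$ have disjoint supports. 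Then $\supp(\wb)\subseteq\supp(\vb-\wb)$, and $\supp(\vb-\wb)$ is automatically contained in $D^d\times[c]$ because $\vb-\wb=\sum_i\sigma_i(\mb_i)$ with $\mb_i\in\ZZ^{(I_m)}$. Applying $\sigma$ thus fixes $\vb-\wb$ and $\wb$ (and hence $\vb$), while $\ub$ itself is not fixed; one then checks directly that the transformed intermediate vertices $\wb+\sum_{i\le t}\tau_i(\mb_i)=\sigma(\wb+\sum_{i\le t}\sigma_i(\mb_i))$ lie in $\sigma(F_L(\ub))\cap\ZZ^{(I_n)}\subseteq\ZZ_{\ge0}^{(I_n)}$ and differ from $\wb$ by elements of $L_n$, so they belong to $F_{L_n}(\ub)$. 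Without the disjoint-supports reduction, you have no handle on $\wb$ (and hence no guarantee the transformed path has the right endpoints), and the attempt to fix $\ub$ instead cannot be made to meet the hypothesis of the lemma.
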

	
	\begin{proof}
		(i) $\Rightarrow$ (iii): 
		This follows analogously to the implication (ii) $\Rightarrow$ (iv) in the proof of \Cref{stabilization}. Note that if $\L$ Markov-stabilizes and $\M_p\subseteq L_p$ is an equivariant Markov basis of $L_n$ for $n\ge p$, then we may assume $\M_p$ is finite by using \Cref{bases_algebraic}(ii) and Hilbert's basis theorem. Similarly, if $\L$ Graver-stabilizes and $\G_p\subseteq L_p$ is an equivariant Graver basis of $L_n$ for $n\ge p$, then $\G_p$ must be finite by e.g. Gordan--Dickson lemma (see \cite[Lemma 2.5.6]{DHK}).
		
		(iii) $\Rightarrow$ (ii): 
		We obtain (ii)(a) by using \Cref{stabilization,implications}. So it remains to prove (ii)(b). 
		
		Assume first that $L$ has a finite equivariant Markov basis $\M$. Choose $m\in\NN$ with $\M\subseteq L_m$. It suffices to show that $\Sym(n)(\M)$ is a Markov basis of $L_n$ for all $n\ge p=dm+1.$ To this end, take any $\ub\in\ZZ_{\ge0}^{(I_n)}$ and let $\vb,\wb\in F_{L_n}(\ub).$ Writing $\vb-\wb=(\vb-\wb)^+-(\vb-\wb)^-$, we may assume that $\vb$ and $\wb$ have disjoint supports. In particular, $\supp(\wb)\subseteq\supp(\vb-\wb)$. Since $\vb,\wb\in F_{L}(\ub)$ and $\Sym(\M)$ is a Markov basis of $L$, there exist $\mb_1,\dots,\mb_s\in\pm\M$ and $\sigma_1,\dots,\sigma_s\in\Sym$ such that
		\[
		\vb-\wb=\sum_{i=1}^s\sigma_i(\mb_i)
		\quad\text{and}\quad
		\wb+\sum_{i=1}^t\sigma_i(\mb_i)\in F_L(\ub) \ \text{ for all }\ 
		t\in [s].
		\]
		Applying \Cref{pull_permutation} to the first equation (as in the proof of \Cref{stabilization}), we find $\sigma\in\Sym$ and $\tau_1,\dots,\tau_s\in \Sym(n)$ such that 
		\[
		\sigma(\vb-\wb) = \vb-\wb
		\quad\text{and}\quad
		\sigma\circ\sigma_i(\mb_i)=\tau_i(\mb_i)
		\ \text{ for all }\ i\in [s].
		\]
		It holds furthermore that $\sigma(\wb) = \wb$ since $\supp(\wb)\subseteq\supp(\vb-\wb)$. Therefore,
		\[
		\vb-\wb=\sigma(\vb-\wb)
		=\sum_{i=1}^s\sigma\circ\sigma_i(\mb_i)
		=\sum_{i=1}^s\tau_i(\mb_i)
		\]
		and
		\[
		\wb+\sum_{i=1}^t\tau_i(\mb_i)
		=\sigma\Big(\wb+\sum_{i=1}^t\sigma_i(\mb_i)\Big)
		\in \sigma(F_L(\ub))\cap\ZZ^{(I_n)}
		\subseteq\ZZ_{\ge0}^{(I_n)}
		\ \text{ for all }\ t\in [s].
		\]
		This confirms that $\Sym(n)(\M)$ is indeed a Markov basis of $L_n$ for all $n\ge p.$
		
		Next, suppose that $L$ has a finite equivariant Graver basis $\G$. Then we can find a $q\in\NN$ such that $\G\subseteq L_q$ and $L_{n} = L \cap \ZZ^{(I_n)}$ for all $n\ge q$. By \Cref{prop_local_global}(ii), $\Sym(\G)\cap \ZZ^{(I_n)}$ is the Graver basis of $L_n$ for $n\ge q$. Since $\Sym(\G)\cap \ZZ^{(I_n)}=\Sym(n)(\G)$ for $n\ge q$, we are done.
		
		(ii) $\Rightarrow$ (i): 
		Let $p=\max\{q,dq'+1\}$. First suppose that $L_{n}$ has a Markov basis $\M_n$ that consists of elements of support size at most $q'$ for $n\ge p$. Then arguing as in the proof of \Cref{stabilization} we find a subset $\M_n'\subseteq L_p$ such that $\M_n\subseteq \Sym(n)(\M_n')$ for $n\ge p$. In particular, $\M_n'$ is an equivariant Markov basis of $L_n$ for $n\ge p$. It follows that
		\[
		\M=\bigcup_{n\ge p}\M_n'
		\subseteq L_p
		\]
		is an equivariant Markov basis of $L_n$ for all $n\ge p$.
		
		The argument for Graver bases is similar. Note that if $\ub\in\ZZ^{(I)}$ is $\sqsubseteq$-minimal, then $\sigma(\ub)$ is also $\sqsubseteq$-minimal for any $\sigma\in \Sym.$ For $n\ge p$, let $\G_n$ be the Graver basis of $L_n$ and let $\G_n'\subseteq L_p$ be the set constructed as in the proof of \Cref{stabilization} such that $\G_n\subseteq \Sym(n)(\G_n')$. By construction, all the elements of $\G_n'$ are $\sqsubseteq$-minimal because they are of the form $\sigma(\ub)$ with $\sigma\in\Sym(n)$ and $\ub\in\G_n$. So if we set
		\[
		\G=\bigcup_{n\ge p}\G_n'
		\subseteq L_p,
		\]
		then for $n\ge p$, $\Sym(n)(\G)$ is a subset of $L_n$ that consists of $\sqsubseteq$-minimal elements. This means that $\Sym(n)(\G)\subseteq\G_n$ for $n\ge p$. On the other hand, $\G_n\subseteq \Sym(n)(\G)$ since $\G_n'\subseteq \G$. Hence, $\Sym(n)(\G)=\G_n$ and $\G$ is an equivariant Graver basis of $L_n$ for all $n\ge p$.
	\end{proof}
	
	We do not know whether \Cref{stabilization} can also be fully extended to equivariant (universal) Gr\"{o}bner bases. Nevertheless, at least a partial extension is possible.
	
	\begin{proposition}
		\label{stabilization-Groebner}
		Consider the following conditions for a $\Sym$-invariant chain of lattices $\L=(L_n)_{n\ge1}$ with limit $L$: 
		\begin{enumerate}
			\item
			$\L$ Gr\"{o}bner- (respectively, universal Gr\"{o}bner-)stabilizes.
			\item
			There exist $q, q'\in\NN$ such that for all $n\ge q$ the following hold:
			\begin{enumerate}
				\item
				$ L_{n} = L \cap \ZZ^{(I_n)}$,
				\item
				$L_{n}$ has a Gr\"{o}bner (respectively, universal Gr\"{o}bner) basis consisting of elements of support size at most $q'$.
			\end{enumerate}
			\item
			$L$ has a finite equivariant Gr\"{o}bner (respectively, universal Gr\"{o}bner) basis.
		\end{enumerate}
		Then it holds that
		\[
		\text{(i) }
		\Leftrightarrow 
		\text{(ii) }
		\Rightarrow
		\text{(iii)}.
		\]
		
		Moreover, for (equivariant) Gr\"{o}bner bases with respect to the lexicographic order $\prec_{\lex}$, one has 
		$\text{(i) }\Leftrightarrow \text{(ii) }\Leftrightarrow
		\text{(iii)}.$
	\end{proposition}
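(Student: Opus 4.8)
The plan is to follow the blueprint of the proofs of \Cref{stabilization} and \Cref{stabilization-others}, establishing the cycle (ii) $\Rightarrow$ (i) $\Rightarrow$ (ii), then (ii) $\Rightarrow$ (iii) for an arbitrary term order, and finally (iii) $\Rightarrow$ (ii) when $\prec=\prec_{\lex}$ by means of \Cref{local_global_Groebner}. For (ii) $\Rightarrow$ (i) I would put $p=\max\{q,dq'+1\}$ and, for each $n\ge p$, take a (universal) Gröbner basis $\B_n$ of $L_n$ whose elements have support size at most $q'$; applying \Cref{reduce_width} to each $\supp(\bb)$ with $\bb\in\B_n$ yields $\sigma\in\Sym(n)$ with $\sigma(\bb)\in\ZZ^{(I_p)}\cap L=L_p$ (using (ii)(a)), so there is $\B_n'\subseteq L_p$ with $\B_n\subseteq\Sym(n)(\B_n')\subseteq L_n$. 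Since adjoining further elements of $L_n$ to a (universal) Gröbner basis of $L_n$ leaves it one (the graphs $G_\prec(\ub,\cdot)$ only acquire edges), $\Sym(n)(\B_n')$ is again a (universal) Gröbner basis of $L_n$, and then $\B\defas\bigcup_{n\ge p}\B_n'\subseteq L_p$ witnesses (universal) Gröbner-stabilization. Conversely, for (i) $\Rightarrow$ (ii) I would take a witness $\B_p\subseteq L_p$: an equivariant (universal) Gröbner basis is in particular an equivariant generating set, so \Cref{implications} and \Cref{stabilization} give (ii)(a); and since $\B_p\subseteq\ZZ^{(I_p)}$ and $\Sym(n)$ preserves support size, $\Sym(n)(\B_p)$ is a (universal) Gröbner basis of $L_n$ with elements of support size at most $q'\defas p^dc$, which is (ii)(b).

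For (ii) $\Rightarrow$ (iii) I would use (i) to obtain $\B_p\subseteq L_p$ that is an equivariant (universal) Gröbner basis of $L_n$ for all $n\ge p$, and first observe that $\B_p$ is then an equivariant (universal) Gröbner basis of $L$ itself: given $\ub\in\ZZ_{\ge0}^{(I)}$ with $\prec$-minimal fiber element $\vb$ of $F_L(\ub)$, choose $n\ge p$ large enough that $\ub,\vb\in\ZZ_{\ge0}^{(I_n)}$; then $\vb$ is $\prec$-minimal in $F_{L_n}(\ub)\subseteq F_L(\ub)$, and a directed reduction path from $\ub$ to $\vb$ in $G_\prec(\ub,\Sym(n)(\B_p))$ is one in $G_\prec(\ub,\Sym(\B_p))$ (in the universal case one runs this over all term orders). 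To finish I must cut $\B_p$ down to a finite set; here I would imitate the Markov-basis argument of \Cref{stabilization-others}, using that $L_p$ has finite rank: $\Ik_{L_p}$ lives in a polynomial ring in finitely many variables, so finitely many elements of the Gröbner basis $\bk(\Sym(p)(\B_p))$ of $\Ik_{L_p}$ (see \Cref{bases_algebraic}(iii)) already have leading terms generating the initial ideal $\ini_\prec(\Ik_{L_p})$; tracing these through $\Sym(p)$ picks out a finite $\B_p^{\mathrm{fin}}\subseteq\B_p$ whose orbit is a (universal) Gröbner basis of $L_p$, and then of $L$ by the truncation argument above together with \Cref{prop_local_global}(i) and (ii)(a). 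For $\prec_{\lex}$ the extra implication (iii) $\Rightarrow$ (ii) follows from \Cref{local_global_Groebner}: a finite equivariant $\prec_{\lex}$-Gröbner basis $\B\subseteq L_m$ of $L$ has $\Sym(\B)\cap\ZZ^{(I_n)}$ a $\prec_{\lex}$-Gröbner basis of every $L_n$, and for $n\ge m$ one checks $\Sym(\B)\cap\ZZ^{(I_n)}=\Sym(n)(\B)$, whose elements have support size at most $|I_m|$; together with (ii)(a) (from \Cref{stabilization}, $L$ being equivariantly finitely generated) this gives (ii).

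The main obstacle is the finiteness reduction in (ii) $\Rightarrow$ (iii) and, relatedly, the fact that (iii) $\Rightarrow$ (i) is only available for $\prec_{\lex}$: for a general term order $\prec$ a permutation $\sigma$ need not respect $\prec$, so $\ini_\prec(\sigma(f))\neq\sigma(\ini_\prec(f))$ in general, and leading-term data cannot be freely transported between $L_p$ and the $\Sym$-orbit of its Gröbner basis, nor is a local--global statement for Gröbner bases like \Cref{local_global_Groebner} at hand. What makes $\prec_{\lex}$ special is that a reduction step never leaves $\ZZ^{(I_n)}$, because every basis vector indexed outside $I_n$ is $\prec_{\lex}$-larger than all those indexed inside $I_n$; this is exactly the feature exploited in \Cref{local_global_Groebner}, and hence in the converse implication.
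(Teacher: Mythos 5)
Your outline tracks the paper's own argument very closely: (ii) $\Rightarrow$ (i) via \Cref{reduce_width} together with the observation that adjoining lattice elements to a Gr\"obner basis keeps it one, (i) $\Rightarrow$ (ii) via \Cref{stabilization} and \Cref{relationship} for part (a) plus the trivial support bound $|I_p|$ for part (b), the truncation construction of a global Gr\"obner basis from a stabilization witness (essentially \Cref{prop_local_global}(i)), and \Cref{local_global_Groebner} for the converse in the lexicographic case. All of that matches what the paper does.

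The finiteness reduction in (ii) $\Rightarrow$ (iii), however, is not complete as written. You cut $\B_p$ to a finite $\B_p^{\mathrm{fin}}\subseteq\B_p$ so that $\Sym(p)(\B_p^{\mathrm{fin}})$ is a Gr\"obner basis of $L_p$, and then invoke the truncation argument together with \Cref{prop_local_global}(i) to conclude $\Sym(\B_p^{\mathrm{fin}})$ is a Gr\"obner basis of $L$. But \Cref{prop_local_global}(i) requires $\Sym(n)(\B_p^{\mathrm{fin}})$ to be a Gr\"obner basis of $L_n$ for \emph{every} $n\ge p$, whereas you establish this only for $n=p$; a subset of a Gr\"obner basis of $L_n$ need not remain a Gr\"obner basis of $L_n$. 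In the Markov case the analogous reduction does go through, because the binomial map $\bk$ is $\Sym$-equivariant: if $\Ik_{L_p}$ is generated by $\bk(\Sym(p)(\M_p^{\mathrm{fin}}))$, then applying any $\sigma\in\Sym(n)$ and using $\Ik_{L_n}=\Ik_{\Sym(n)(\M_p)}$ shows $\Ik_{L_n}=\Ik_{\Sym(n)(\M_p^{\mathrm{fin}})}$. The corresponding step for Gr\"obner bases would require $\ini_\prec$ to be $\Sym$-equivariant, which fails for a general term order $\prec$ --- exactly the obstruction recorded in \Cref{rm-ideal-stabilization}. The paper's own proof is terse here (it says the implication ``follows analogously''), but your explicit version makes the missing step visible: you still need a reason why $\Sym(n)(\B_p^{\mathrm{fin}})$ is a Gr\"obner basis of $L_n$ for all $n\ge p$, and the device used for Markov bases does not transport to initial ideals.
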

	
	\begin{proof}
		The implications (i) $\Rightarrow$ (iii) and (ii) $\Rightarrow$ (i) follow analogously to the corresponding implications in the proof of \Cref{stabilization-others}. For the implication (i) $\Rightarrow$ (ii), it suffices to prove (i) $\Rightarrow$ (ii)(b) by \Cref{stabilization,relationship}. But this is clear because if $\B_p\subseteq L_p$ is an equivariant (universal) Gr\"{o}bner basis of $L_n$ for $n\ge p$, then for such $n$, $\Sym(n)(\B_p)$ is a (universal) Gr\"{o}bner basis of $L_n$.
		
		Finally, the implication (iii) $\Rightarrow$ (ii) for equivariant Gr\"{o}bner bases with respect to $\prec_{\lex}$ follows from \Cref{local_global_Groebner}, using an argument similar to that for Graver bases in the proof of the implication (iii) $\Rightarrow$ (ii) of \Cref{stabilization-others}.
	\end{proof}
	
	\begin{remark}
	\label{rm-ideal-stabilization}
	    The action of $\Sym$ on $I$ induces actions of this group on the polynomial ring $R=K[x_{\ib}\mid \ib\in I]$ and Laurent polynomial ring $R^\pm=R[x_{\ib}^{-1}\mid \ib\in I]$ that are determined by 
	    \[
	    \sigma(x_{\ib})=x_{\sigma(\ib)}
	    \ \text{ and }\
	    \sigma(x_{\ib}^{-1})=x_{\sigma(\ib)}^{-1}
	    \ \text{ for any }  \sigma\in\Sym
	    \text{ and } \ib\in I.
	    \]
	    Analogously to lattices, one defines $\Sym$-invariant chains of (Laurent) ideals and their stabilization. Then for any $\Sym$-invariant chain of lattices $\L=(L_n)_{n\ge1}$, its stabilization and Markov-stabilization can be characterized in algebraic terms as follows:
		\begin{enumerate}
			\item 
			$\L$ stabilizes if and only if the chain of Laurent ideals $(\Ik^{\pm}_{L_n})_{n\ge1}$ stabilizes;
			\item
			$\L$ Markov-stabilizes if and only if the chain of lattice ideals $(\Ik_{L_n})_{n\ge1}$ stabilizes.
		\end{enumerate}
		See \cite{HM} and \cite{HS12} for details in the cases $c=1$ (i.e. $I=\NN^d$) or $d=1$ (i.e. $I=\NN\times[c]$), which can be easily extended to our setting. One might ask whether there is a similar interpretation for the Gr\"{o}bner-stabilization. Here, one obstruction is that for a given term order $\prec$, the chain of initial ideals $(\ini_\prec(\Ik_{L_n}))_{n\ge1}$ might be not $\Sym$-invariant. As a remedy, one replaces the action of the symmetric group $\Sym$ by that of the monoid $\Inc$ of increasing functions. Then for a term order $\prec$ such that the chain $(\ini_\prec(\Ik_{L_n}))_{n\ge1}$ is $\Inc$-invariant, the Gr\"{o}bner-stabilization with respect to $\prec$ of $\L$ can be characterized in terms of the $\Inc$-stabilization of the chain $(\ini_\prec(\Ik_{L_n}))_{n\ge1}$; see \cite{HS12,NR17}.
	\end{remark}
	
	To conclude this section, let us derive a local-global result for equivariant Hilbert bases of monoids in the flavor of \Cref{stabilization}. As for lattices, a chain of monoids $\Mk=(M_n)_{n\ge1}$ with $M_n\subseteq \ZZ^{(I_n)}$ for $n\ge 1$ is called \emph{$\Sym$-invariant} if it is increasing and each monoid $M_n$ is $\Sym(n)$-invariant. In this case, we say that $\Mk$ \emph{stabilizes} if there exists $p\in\NN$ such that
	\[
	M_n=\ZZ_{\ge0}\Sym(n)(M_m)
	\ \text{ for all } n\ge m\ge p.
	\]
	The following result generalizes \cite[Corollary 5.13]{KLR} and \cite[Lemma 5.1]{LR21}.
	\begin{theorem}
		\label{stabilization-monoid}
		Let $\Mk=(M_n)_{n\ge1}$ be a $\Sym$-invariant chain of nonnegative monoids, i.e. $M_n\subseteq \ZZ_{\ge0}^{(I_n)}$ for all $n\ge 1$. Denote $M=\bigcup_{n\ge1}M_n$. Then the following are equivalent:
		\begin{enumerate}
			\item
			$\Mk$ stabilizes and $M_n$ is finitely generated for $n\gg0$;
			\item
			There exist $p\in\NN$ and a finite equivariant Hilbert basis $\Hc_p$ of $M_p$ such that $\Hc_p$ is also an equivariant Hilbert basis of $M_n$ for all $n\ge p$;
			\item
			There exist $q, q'\in\NN$ such that for all $n\ge q$ the following hold:
			\begin{enumerate}
				\item
				$M_{n} = M \cap \ZZ^{(I_n)}$,
				\item
				$M_{n}$ has a finite Hilbert basis consisting of elements of support size at most $q'$;
			\end{enumerate}
			\item
			$M$ has a finite equivariant Hilbert basis.
		\end{enumerate}
	\end{theorem}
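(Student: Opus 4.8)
The plan is to prove the cycle of implications $\text{(i)}\Rightarrow\text{(ii)}\Rightarrow\text{(iv)}\Rightarrow\text{(iii)}\Rightarrow\text{(i)}$, in close analogy with the proof of \Cref{stabilization} and the Graver part of \Cref{stabilization-others}, but replacing the lattice tools by their monoid counterparts: \Cref{lem_Hilbert_monoid}, which identifies the Hilbert basis of a nonnegative monoid with its set of irreducible elements, and \Cref{local_global_Hilbert}, the local--global principle for Hilbert bases. A pleasant feature of the monoid setting is that \Cref{pull_permutation} is not needed. Because there is no cancellation in a nonnegative monoid, whenever an element whose support lies in $I_m$ is written as a $\ZZ_{\ge0}$-linear combination of orbit elements, each summand already has its support in $I_m$; hence the permutation carrying a generator to that summand can be realized inside $\Sym(m)$. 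I will refer to this observation as the \emph{localization trick}, and it will be the workhorse of three of the four implications.

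For $\text{(i)}\Rightarrow\text{(ii)}$, choose $p$ large enough that $M_n=\ZZ_{\ge0}\Sym(n)(M_p)$ for all $n\ge p$ and that $M_p$ is finitely generated, and let $\Hc_p$ be the finite, unique, $\Sym(p)$-invariant Hilbert basis of $M_p$ furnished by \Cref{lem_Hilbert_monoid}(i). Then $\Sym(n)(\Hc_p)$ generates $M_n$ for every $n\ge p$, so by \Cref{lem_Hilbert_monoid}(i) it contains the Hilbert basis of $M_n$; the real task is to show it \emph{equals} it, i.e.\ that every element of $\Sym(n)(\Hc_p)$ remains irreducible in $M_n$. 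Since irreducibility is $\Sym(n)$-equivariant, it suffices to treat $\ub\in\Hc_p$: a hypothetical decomposition $\ub=\vb+\wb$ in $M_n$ with $\vb,\wb\neq\nub$ forces $\supp(\vb),\supp(\wb)\subseteq\supp(\ub)\subseteq I_p$, and applying the localization trick to representations of $\vb$ and $\wb$ via $\Sym(n)(M_p)$ shows $\vb,\wb\in M_p$, contradicting the irreducibility of $\ub$ in $M_p$. Concretely, this step amounts to establishing the identity $M_n\cap\ZZ^{(I_p)}=M_p$ for $n\ge p$, which is its technical heart.

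For $\text{(ii)}\Rightarrow\text{(iv)}$, the same finite set $\Hc_p$ works: its $\Sym$-orbit generates $M=\bigcup_{n\ge p}M_n$, and each orbit element $\sigma(\ub)$ stays irreducible in $M$ because any decomposition of it in $M$ actually takes place in some $M_N$ with $N\ge p$ (again via $M_k\cap\ZZ^{(I_N)}=M_N$ for $k\ge N$, by the localization trick), where $\sigma(\ub)$ is already known to be irreducible; hence $\Sym(\Hc_p)$ is the set of irreducibles of $M$, i.e.\ its Hilbert basis. For $\text{(iv)}\Rightarrow\text{(iii)}$, write a finite equivariant Hilbert basis $\Hc$ of $M$ with $\Hc\subseteq\ZZ^{(I_m)}$, and set $q=m$ and $q'=\max_{\ub\in\Hc}|\supp(\ub)|$; the localization trick yields $M_n=M\cap\ZZ^{(I_n)}$ for $n\ge m$ (which is (iii)(a)), and then \Cref{local_global_Hilbert} (equivalently, \Cref{lem_Hilbert_monoid}(i) applied to $M$ and $M_n$) identifies the Hilbert basis of $M_n$ with $\Sym(\Hc)\cap\ZZ^{(I_n)}$, whose elements have support size at most $q'$. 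Finiteness of this set, hence (iii)(b), follows from an orbit-finiteness count: each $\ub\in\Hc$ has only finitely many $\Sym$-translates supported inside the finite set $I_n$. Finally, $\text{(iii)}\Rightarrow\text{(i)}$ mirrors \Cref{stabilization} verbatim: finite generation of $M_n$ for $n\ge q$ is immediate; with $p=\max\{q,dq'+1\}$ and $n\ge p$, \Cref{reduce_width} moves each member of the Hilbert basis of $M_n$ into $\ZZ^{(I_p)}$ by some $\sigma\in\Sym(n)$, hence into $M_n\cap\ZZ^{(I_p)}=M_p$ (using (iii)(a) at $n$ and at $p$), so the Hilbert basis of $M_n$ lies in $\Sym(n)(M_p)$ and $M_n=\ZZ_{\ge0}\Sym(n)(M_p)$; the general relation $M_n=\ZZ_{\ge0}\Sym(n)(M_m)$ for $n\ge m\ge p$ then follows formally.

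The step I expect to be most delicate is the bookkeeping around \emph{minimality}: unlike an equivariant generating set, an equivariant Hilbert basis must be minimal, so in each implication one cannot simply transport a generating set but must check that no orbit element becomes reducible in the larger monoid. The uniform way through is the identity $M_n\cap\ZZ^{(I_p)}=M_p$ on the relevant range of indices, which both pins irreducibility down locally and is precisely where nonnegativity of the monoid, via the localization trick, enters. A secondary subtlety, easy to overlook, is that a submonoid of $\ZZ_{\ge0}^{I_n}$ whose Hilbert basis has bounded support size need not be finitely generated; in $\text{(iv)}\Rightarrow\text{(iii)}$ one must therefore genuinely use that $M_n$ is the truncation of the $\Sym$-orbit-generated monoid $M$ in order to extract the required finiteness.
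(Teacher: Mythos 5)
Your proposal is correct and follows the same cycle of implications $\text{(i)}\Rightarrow\text{(ii)}\Rightarrow\text{(iv)}\Rightarrow\text{(iii)}\Rightarrow\text{(i)}$ and the same key tools as the paper's proof: the nonnegativity argument (your ``localization trick'') in place of \Cref{pull_permutation}, together with \Cref{lem_Hilbert_monoid}, \Cref{local_global_Hilbert}, and \Cref{reduce_width}. You spell out the irreducibility checks via $M_n\cap\ZZ^{(I_p)}=M_p$ somewhat more explicitly than the paper, which proves (i)$\Rightarrow$(ii) and (iv)$\Rightarrow$(iii) and defers (ii)$\Rightarrow$(iv) and (iii)$\Rightarrow$(i) to analogy with \Cref{stabilization}, but the substance is the same.
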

	
	\begin{proof}
		Observe that if $\ub\in M$ is an irreducible element, then $\sigma(\ub)$ is also an irreducible element of $M$ for any $\sigma\in\Sym.$
		
		(i) $\Rightarrow$ (ii): If $\Mk$ stabilizes, there exists $p\in\NN$ such that $M_n$ is generated by $\Sym(n)(M_p)$ for all $n\ge p$. Choosing $p$ large enough, we may assume that $M_p$ is finitely generated. So if $\Hc_p$ is any equivariant Hilbert basis of $M_p$, then $\Hc_p$ is finite and the set
		$$\Sym(n)(\Sym(p)(\Hc_p))=\Sym(n)(\Hc_p)$$
		generates $M_n$ for all $n\ge p$. Since $\Sym(n)(\Hc_p)$ consists of irreducible elements, it must be the Hilbert basis of $M_n$ by \Cref{lem_Hilbert_monoid}.
		
		(iv) $\Rightarrow$ (iii): To show that $M_{n} = M \cap \ZZ^{(I_n)}$ for $n\gg0$ one can argue similarly to the proof of \Cref{stabilization}. However, employing the nonnegativity of $\Mk$, this can be proven without using \Cref{pull_permutation}. Let $\Hc$ be a finite equivariant Hilbert basis of $M$ and choose $q\in\NN$ such that $\Hc\subseteq M_q$. We show that $M_{n} = M \cap \ZZ^{(I_n)}$ for all $n\ge q$. Indeed, take any $\ub\in M \cap \ZZ^{(I_n)}$ with $n\ge q$. Since $M=\ZZ_{\ge0}\Sym(\Hc)$, there exist $\hb_1,\dots,\hb_s\in\Hc$, $m_1,\dots,m_s\in\ZZ_{\ge0}$ and $\sigma_1,\dots,\sigma_s\in\Sym$ such that $\ub=\sum_{j=1}^sm_j\sigma_j(\hb_j).$ We may assume that all the coefficients $m_j$ are positive. Then the nonnegativity of the elements $\hb_j$ yields
		\[
		\supp(\sigma_j(\hb_j))\subseteq\supp(\ub)\subseteq I_n
		\ \text{ for all }\ j\in [s].
		\]
		It follows that $\sigma_j(\hb_j)=\tau_j(\hb_j)$ with $\tau_j\in\Sym(n)$ for all $j\in [s]$. Hence, 
		$$\ub=\sum_{j=1}^sm_j\tau_j(\hb_j)\in \ZZ_{\ge0}\Sym(n)(M_q)\subseteq M_n,$$
		and therefore $M_{n} = M \cap \ZZ^{(I_n)}$, as desired.
		
		Now using \Cref{local_global_Hilbert} we see that $\Sym(\Hc)\cap M_n=\Sym(n)(\Hc)$ is the Hilbert basis of $M_n$ for $n\ge q.$
		
		Finally, the implications (ii) $\Rightarrow$ (iv) and (iii) $\Rightarrow$ (i) follow similarly to the corresponding implications in the proof of \Cref{stabilization}.
	\end{proof}
	
	\section{Finiteness of equivariant Graver bases}
	\label{sec-equi-Graver}
	This section deals with the case $d=1$. Our goal is to prove the following finiteness result for equivariant Graver bases and discuss its consequences. Throughout the section, let $c$ be a positive integer.
	
	\begin{theorem}
		\label{Finiteness-Graver}
		Suppose that $I=\NN\times[c]$. Then every $\Sym$-invariant lattice in $\ZZ^{(I)}$ has a finite equivariant Graver basis.
	\end{theorem}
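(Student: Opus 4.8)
The plan is to reduce the statement to a finiteness result for the Hilbert basis of an auxiliary monoid, and then to establish that result by a Higman-type argument. Fix a $\Sym$-invariant lattice $L\subseteq\ZZ^{(I)}$ with $I=\NN\times[c]$ and let $\G$ be its Graver basis, i.e. the set of $\sqsubseteq$-minimal elements of $L\setminus\{\nub\}$. A set obtained by choosing one element from each $\Sym$-orbit in $\G$ is an equivariant Graver basis of $L$, so it suffices to show that $\G$ meets only finitely many $\Sym$-orbits.

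The first step is to pass to the Lawrence-type monoid
\[
M=\{(\vb,\wb)\in\ZZ_{\ge0}^{(I)}\times\ZZ_{\ge0}^{(I)}\mid \vb-\wb\in L\}=\Lambda\cap\ZZ_{\ge0}^{(I')},
\]
where $I'=I\sqcup I\cong\NN\times[2c]$ carries the diagonal $\Sym$-action and $\Lambda=\{(\ab,\bb)\in\ZZ^{(I)}\times\ZZ^{(I)}\mid\ab-\bb\in L\}$ is a $\Sym$-invariant lattice in $\ZZ^{(I')}$. Then $M$ is a $\Sym$-invariant submonoid of $\ZZ_{\ge0}^{(I')}$, so by \Cref{lem_Hilbert_monoid}(ii) its Hilbert basis is exactly the set $\Hc$ of $\sqsubseteq$-minimal elements of $M\setminus\{\nub\}$. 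A direct check shows that if $(\vb,\wb)\in M$ is irreducible, then either $\vb$ and $\wb$ have disjoint support, or $(\vb,\wb)=(\eb_\ib,\eb_\ib)$ for some $\ib$ with $\eb_\ib\notin L$ (when $\ib\in\supp(\vb)\cap\supp(\wb)$ one may always split off $(\eb_\ib,\eb_\ib)\in M$). Conversely, for $\ub\in L\setminus\{\nub\}$ the element $(\ub^+,\ub^-)$ lies in $\Hc$ precisely when $\ub\in\G$, and $\ub\mapsto(\ub^+,\ub^-)$ is an injective $\Sym$-equivariant map. Hence $\Hc$ is the disjoint union of $\{(\ub^+,\ub^-):\ub\in\G\}$ together with at most $c$ further $\Sym$-orbits, so $\G$ meets finitely many $\Sym$-orbits if and only if $\Hc$ does. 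Thus the theorem follows once we know that $M$ has a finite equivariant Hilbert basis.

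The second step proves this using Higman's lemma \cite{Hig}. View each element of $M$ as a finitely supported function on $\NN\times[2c]$ and encode it as a finite word over the alphabet $Q=\ZZ_{\ge0}^{2c}$ obtained by listing, in increasing order of the $\NN$-coordinate, the columns that occur in its support. The componentwise partial order on $Q$ is a well-quasi-order by Dickson's lemma (see \cite[Lemma 2.5.6]{DHK}), so Higman's lemma shows that finite words over $Q$ form a well-quasi-order under the subword-embedding order. Now assume for contradiction that $\Hc$ meets infinitely many $\Sym$-orbits and pick representatives $\mb_1,\mb_2,\dots\in\Hc$ in pairwise distinct orbits. Applying Higman's lemma to the associated words yields indices $a<b$ and an embedding of the support columns of $\mb_a$ into those of $\mb_b$ that dominates componentwise in each matched column; extending this embedding to a permutation $\sigma\in\Sym$ of $\NN$ gives $\sigma(\mb_a)\sqsubseteq\mb_b$. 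Since $\sigma(\mb_a)\in M\setminus\{\nub\}$ by invariance and $\mb_b$ is $\sqsubseteq$-minimal in $M\setminus\{\nub\}$, we conclude $\sigma(\mb_a)=\mb_b$, so $\mb_a$ and $\mb_b$ lie in the same orbit, a contradiction.

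I expect the main obstacle to be the bookkeeping in the first step: verifying that irreducibility in $M$ forces disjoint supports, that the surviving generators $(\eb_\ib,\eb_\ib)$ contribute only finitely many orbits, and that \emph{finitely many $\Sym$-orbits} is equivalent to \emph{finite equivariant basis} for both $\G$ and $\Hc$. The Higman argument is then essentially routine, the only subtlety being to observe that a subword embedding between two finitely supported functions on $\NN\times[2c]$ genuinely lifts to an element of $\Sym$, and that tracking componentwise domination keeps $\sqsubseteq$ intact. (When $c=1$ the $\sqsubseteq$-minimal elements turn out to be rigid enough that Higman's lemma can be bypassed; see \Cref{thm_Graver}.)
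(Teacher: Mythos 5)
Your proposal is correct and follows essentially the same route as the paper: you reduce the Graver basis of $L$ to the Hilbert basis of the Lawrence-lifted monoid $M=\varphi^{-1}(L)\cap\ZZ_{\ge0}^{(\NN\times[2c])}$ exactly as in \Cref{prop_Hilbert_Graver} and \Cref{cor_Hilbert_Graver}, and then obtain finiteness of the equivariant Hilbert basis by encoding finitely supported nonnegative vectors as words over $\ZZ_{\ge0}^{2c}$ and invoking Higman's lemma, which is \Cref{lattice-monoid-ext}. The paper packages the Higman step via the well-partial-order $\sqsubseteq_{\Inc}$ and finitely generated final segments while you run the equivalent bad-sequence argument directly; your side observation that $(\eb_\ib,\eb_\ib)$ is irreducible in $M$ only when $\eb_\ib\notin L$ is a small but correct refinement of \Cref{prop_Hilbert_Graver}(ii), though it plays no further role in the argument.
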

	
	Before proving this result, let us derive some corollaries. First, using \Cref{bases_algebraic} we get the following algebraic version of \Cref{Finiteness-Graver}.
	
	\begin{corollary}
	    Let $I=\NN\times[c]$. Then for any $\Sym$-invariant lattice $L\subseteq\ZZ^{(I)}$, the lattice ideal $\Ik_L$ has a finite equivariant Graver basis.
	\end{corollary}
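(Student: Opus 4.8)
The plan is to translate \Cref{Finiteness-Graver} from the lattice side to the ideal side via the dictionary in \Cref{bases_algebraic}(v). First I would apply \Cref{Finiteness-Graver} to the given $\Sym$-invariant lattice $L\subseteq\ZZ^{(I)}$ to obtain a \emph{finite} subset $\G\subseteq L$ that is an equivariant Graver basis of $L$; by definition this means $\Sym(\G)$ is the Graver basis of $L$. Applying \Cref{bases_algebraic}(v) with $\B=\Sym(\G)$, the set $\bk(\Sym(\G))$ is precisely the set of primitive binomials of $\Ik_L$, that is, the (classical) Graver basis of the ideal $\Ik_L$.

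Next I would verify the one genuinely new point: the map $\bb\mapsto\bk(\bb)=\xb^{\bb^+}-\xb^{\bb^-}$ intertwines the $\Sym$-action on $\ZZ^{(I)}$ with the $\Sym$-action on $R$ from \Cref{rm-ideal-stabilization}, i.e.\ $\bk(\sigma(\bb))=\sigma(\bk(\bb))$ for all $\sigma\in\Sym$ and $\bb\in L$. This is immediate: since $\sigma$ merely permutes the coordinates indexed by $I$, one has $(\sigma(\bb))^{\pm}=\sigma(\bb^{\pm})$, and since $\sigma$ acts on $R$ by a ring automorphism with $\sigma(x_\ib)=x_{\sigma(\ib)}$, one has $\xb^{\sigma(\ab)}=\sigma(\xb^{\ab})$ for every $\ab\in\ZZ_{\ge0}^{(I)}$; combining the two yields $\bk(\sigma(\bb))=\xb^{\sigma(\bb^+)}-\xb^{\sigma(\bb^-)}=\sigma(\xb^{\bb^+}-\xb^{\bb^-})=\sigma(\bk(\bb))$. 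Hence $\bk(\Sym(\G))=\Sym(\bk(\G))$, so $\Sym(\bk(\G))$ equals the set of primitive binomials of $\Ik_L$. Since $\G$ is finite, so is $\bk(\G)$, and this exhibits $\bk(\G)$ as a finite equivariant Graver basis of $\Ik_L$.

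There is essentially no obstacle here: all the content of the corollary is carried by \Cref{Finiteness-Graver} together with \Cref{bases_algebraic}(v). The only things requiring care are bookkeeping: (a) fixing the convention that an \emph{equivariant Graver basis} of $\Ik_L$ is a subset whose $\Sym$-orbit is the set of primitive binomials of $\Ik_L$ — which is exactly \Cref{def-equi-bases} transported along the bijection $\bb\leftrightarrow\bk(\bb)$ — and (b) the routine equivariance check for $\bk$ indicated above.
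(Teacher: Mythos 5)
Your proof is correct and follows exactly the route the paper intends: the paper simply declares the corollary immediate from \Cref{Finiteness-Graver} together with \Cref{bases_algebraic}(v). You have merely spelled out the routine but correct equivariance check $\bk(\sigma(\bb))=\sigma(\bk(\bb))$ that the paper leaves implicit.
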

	
	 The next consequence is immediate from \Cref{Finiteness-Graver,implications}.
	
	\begin{corollary}
		\label{cor-finiteness-others}
		Let $I=\NN\times[c]$. Then every $\Sym$-invariant lattice in $\ZZ^{(I)}$ has
		\begin{enumerate}
			\item 
			a finite equivariant universal Gr\"{o}bner basis,
			\item 
			a finite equivariant Gr\"{o}bner basis with respect to an arbitrary term order on $\ZZ_{\ge0}^{(I)}$,
			\item 
			a finite equivariant Markov basis,
			\item 
			a finite equivariant generating set.
		\end{enumerate}
	\end{corollary}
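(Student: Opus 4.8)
The plan is to read each of the four statements directly off the hierarchy of bases in \Cref{implications}, using \Cref{Finiteness-Graver} as the single input. Let $L\subseteq\ZZ^{(I)}$ be a $\Sym$-invariant lattice, where $I=\NN\times[c]$. By \Cref{Finiteness-Graver}, $L$ has a finite equivariant Graver basis; call it $\G$, so $\Sym(\G)$ is the Graver basis of $L$ and $\G$ is finite.

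For part (i): by \Cref{implications} applied to $L$, the Graver basis $\Sym(\G)$ is in particular a universal Gr\"obner basis of $L$. Since $\G$ is finite and $\Sym(\G)$ is a universal Gr\"obner basis of $L$, the set $\G$ is by definition a finite equivariant universal Gr\"obner basis of $L$, which proves (i). For part (ii): fix an arbitrary term order $\prec$ on $\ZZ_{\ge0}^{(I)}$. A universal Gr\"obner basis is, by \Cref{def:bases}(iv), a Gr\"obner basis with respect to every term order, so $\Sym(\G)$ is a Gr\"obner basis of $L$ with respect to $\prec$; hence $\G$ is a finite equivariant Gr\"obner basis of $L$ with respect to $\prec$, proving (ii). Parts (iii) and (iv) follow the same pattern: \Cref{implications} gives that $\Sym(\G)$ is a Markov basis of $L$, respectively a generating set of $L$, so the finite set $\G$ is a finite equivariant Markov basis, respectively a finite equivariant generating set, of $L$.

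There is no real obstacle here, since every implication needed is already packaged in \Cref{implications} and \Cref{def-equi-bases}; the only point worth checking is purely formal, namely that finiteness of the chosen set $\G$ is preserved under each step. This holds trivially: in every case we keep the \emph{same} finite set $\G\subseteq L$ and merely invoke a weaker property of the already-established basis $\Sym(\G)$. Thus the corollary is immediate from \Cref{Finiteness-Graver} together with \Cref{implications}.
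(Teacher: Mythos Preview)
Your proof is correct and follows exactly the same approach as the paper, which simply states that the corollary is immediate from \Cref{Finiteness-Graver} and \Cref{implications}. You have merely spelled out the routine unpacking of \Cref{def-equi-bases} that the paper leaves implicit.
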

	
	As a local version of \Cref{Finiteness-Graver,cor-finiteness-others}, the result below is obtained by combining Theorems \ref{Finiteness-Graver}, \ref{stabilization-others} and \Cref{relationship}.
	
	\begin{corollary}
		\label{cor-stabilizations}
		Let $I=\NN\times[c]$. Then every $\Sym$-invariant chain of lattices in $\ZZ^{(I)}$ Graver-stabilizes. In particular, such a chain
		\begin{enumerate}
			\item 
			universal Gr\"{o}bner-stabilizes,
			\item 
			Gr\"{o}bner-stabilizes with respect to an arbitrary term order on $\ZZ_{\ge0}^{(I)}$,
			\item 
			Markov-stabilizes,
			\item 
			stabilizes.
		\end{enumerate}
	\end{corollary}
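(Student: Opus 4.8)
The plan is to reduce the finiteness of the equivariant Graver basis of $L$ to the finiteness of an equivariant Hilbert basis of an auxiliary nonnegative monoid built from $L$ by a Lawrence-type doubling, and then to settle the latter by Higman's lemma. Write $I'=I\sqcup I\cong\NN\times[2c]$, with $\Sym$ acting diagonally on the two copies of $I$, and set $L'=\{(\ab,\bb)\in\ZZ^{(I')}\mid \ab-\bb\in L\}$; this is a $\Sym$-invariant lattice, and $M=L'\cap\ZZ_{\ge0}^{(I')}$ is a $\Sym$-invariant monoid (this is \Cref{prop_Hilbert_Graver}). By \Cref{lem_Hilbert_monoid} the Hilbert basis $\Hc$ of $M$ is the set of $\sqsubseteq$-minimal elements of $M\setminus\{\nub\}$, and a direct check shows $\Hc=\{(\ub^{+},\ub^{-})\mid\ub\in\G\}\cup\{(\eb_\ib,\eb_\ib)\mid\ib\in I,\ \eb_\ib\notin L\}$, where $\G$ is the Graver basis of $L$: every element of the right-hand side is irreducible in $M$, and conversely, given $(\ab,\bb)\in M$ one writes $\ub\defas\ab-\bb\in L$ as a conformal $\ZZ_{\ge0}$-combination $\ub=\sum_k\gb_k$ of Graver basis elements (legitimate after truncating to a finite-rank $L_n$ and using \Cref{prop_local_global}(ii) together with the classical fact), so that $(\ab,\bb)=\sum_k(\gb_k^{+},\gb_k^{-})+(\wb,\wb)$ for some $\wb\in\ZZ_{\ge0}^{(I)}$, and the diagonal part is decomposed using the generators $(\eb_\ib,\eb_\ib)$, or $(\eb_\ib,\nub)+(\nub,\eb_\ib)$ when $\eb_\ib\in L$. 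Since $\ub\mapsto(\ub^{+},\ub^{-})$ is $\Sym$-equivariant and injective, its image is a $\Sym$-invariant subset of $\Hc$, and $\{\eb_\ib\notin L\}$ constitutes at most $c$ orbits, it follows that $\G$ has finitely many $\Sym$-orbits if and only if $\Hc$ does. Hence it suffices to prove that $M$ has a finite equivariant Hilbert basis.

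For the latter (this is \Cref{lattice-monoid-ext}) I would put on $\ZZ_{\ge0}^{(I')}$ the \emph{$\Sym$-divisibility} preorder $\hb\preceq\hb'$ defined by $\sigma(\hb)\le\hb'$ coordinatewise for some $\sigma\in\Sym$, and prove that it is a well-quasi-order. Each $\hb\in\ZZ_{\ge0}^{(\NN\times[2c])}$ is encoded by the finite multiset of its nonzero ``columns'' $(\hb_{(n,j)})_{j\in[2c]}\in\ZZ_{\ge0}^{2c}$, $n\in\NN$; one checks that $\hb\preceq\hb'$ holds exactly when this multiset embeds into that of $\hb'$ with each column below its image (a suitable finite permutation of $\NN$ realizing the embedding always exists, since cofinitely many columns vanish). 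Now $\ZZ_{\ge0}^{2c}$ with the componentwise order is a well-quasi-order by the Gordan--Dickson lemma \cite[Lemma 2.5.6]{DHK}, and finite multisets over a well-quasi-order again form a well-quasi-order by Higman's lemma \cite{Hig}; hence $\preceq$ is a well-quasi-order on $\ZZ_{\ge0}^{(I')}$.

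To conclude, suppose for contradiction that the irreducible elements of $M$ fall into infinitely many $\Sym$-orbits, and choose representatives $\hb_1,\hb_2,\dots$ of pairwise distinct orbits. The well-quasi-order property yields $k<l$ and $\sigma\in\Sym$ with $\sigma(\hb_k)\le\hb_l$. Then $\sigma(\hb_k)\in M$ since $M$ is $\Sym$-invariant, and $\sigma(\hb_k)\ne\hb_l$ because $\hb_k$ and $\hb_l$ lie in different orbits; moreover $\hb_l-\sigma(\hb_k)$ is nonnegative and lies in $L'$, hence in $M$, and is nonzero. Thus $\hb_l=\sigma(\hb_k)+(\hb_l-\sigma(\hb_k))$ is a nontrivial decomposition in $M$, contradicting the irreducibility of $\hb_l$. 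Therefore $M$ has only finitely many orbits of irreducibles, so any set of orbit representatives is a finite equivariant Hilbert basis of $M$; transporting this through the dictionary of the first paragraph shows that $\G$ has finitely many $\Sym$-orbits, and any set of orbit representatives of $\G$ is a finite equivariant Graver basis of $L$.

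The main obstacle is the well-quasi-order step, and it is precisely here that the hypothesis $d=1$ enters: only for $d=1$ does an element of $\ZZ_{\ge0}^{(I)}$ break into finitely many columns over a \emph{fixed finite-dimensional} alphabet on which $\Sym$ merely permutes coordinates --- exactly the input Higman's lemma requires --- whereas for $d\ge2$ the diagonal $\Sym$-action destroys this structure. A secondary point requiring care is the conformal decomposition into Graver elements used in the first paragraph, which is handled by passing to a finite-rank truncation as indicated. Finally, in the special case $c=1$ there is a more elementary argument (\Cref{thm_Graver}) that bypasses Higman's lemma by bounding the support size of every Graver basis element of a $\Sym$-invariant lattice in $\ZZ^{(\NN)}$ by a constant depending only on the lattice, thereby reducing the problem to the finiteness of the Graver basis of a single finite-rank truncation.
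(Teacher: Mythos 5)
Your argument, taken on its own terms, is a correct proof of \Cref{Finiteness-Graver} --- that every $\Sym$-invariant lattice $L\subseteq\ZZ^{(\NN\times[c])}$ has a finite equivariant Graver basis --- and it follows essentially the paper's route: the Lawrence-type doubling $\varphi$ (\Cref{prop_Hilbert_Graver,cor_Hilbert_Graver}) reduces the question to producing a finite equivariant Hilbert basis of the nonnegative monoid $M=\varphi^{-1}(L)\cap\ZZ_{\ge0}^{(\NN\times[2c])}$, and Higman's lemma supplies the requisite well-partial-order (\Cref{lattice-monoid-ext}). Your multiset phrasing of the well-partial-order step, working with the $\Sym$-divisibility preorder directly rather than the $\Inc$-divisibility order $\sqsubseteq_{\Inc}$ and the Higman order on finite sequences, is an equivalent repackaging (the multiset order is a quotient of the sequence order, and the well-partial-order property descends to quotients). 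You even tighten the description of the Hilbert basis in \Cref{prop_Hilbert_Graver}(ii): when $\eb_\ib\in L$ the pair $(\eb_\ib,\eb_\ib)=(\eb_\ib,\nub)+(\nub,\eb_\ib)$ is reducible in $M$, so the diagonal generators should indeed be restricted to those $\ib$ with $\eb_\ib\notin L$.

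The gap is that \Cref{cor-stabilizations} is a statement about a $\Sym$-invariant \emph{chain} $\L=(L_n)_{n\ge1}$: the conclusion is that some fixed finite equivariant Graver basis of a single $L_p$ remains an equivariant Graver basis of every $L_n$ with $n\ge p$. Your proposal stops at the global theorem and never addresses stabilization of the chain. The missing bridge is \Cref{stabilization-others}: for a $\Sym$-invariant chain $\L$ with limit $L$, $\L$ Graver-stabilizes if and only if $L$ has a finite equivariant Graver basis. That equivalence is not immediate --- it passes through the auxiliary characterization (ii), relies on the support-size reduction of \Cref{reduce_width}, the permutation-pulling of \Cref{pull_permutation}, and \Cref{prop_local_global}(ii), and it also has to supply the saturation $L_n=L\cap\ZZ^{(I_n)}$ for $n\gg0$ (needed precisely because the chain is not assumed saturated). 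Once that bridge is in place, parts (i)--(iv) follow at once from the implication chain of \Cref{relationship}. In short: you prove the hard global input, but leave out the local-global reduction that turns it into the stated corollary.
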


	This corollary generalizes and strengthens a finiteness result for Markov bases of hierarchical models due to Ho\c{s}ten and Sullivant \cite[Theorem 1.1]{HoS}. To present this result, let us recall some necessary notions; see, e.g. \cite[Chapter 9]{Su} for further details. Hierarchical models are used in statistics to represent interactions among random variables. Such a model is determined by a simplicial complex $\Delta\subseteq 2^{[m]}$ and a vector $\rb=(r_1,\dots,r_m)\in\NN^m$, where $m\in\NN$ is a given number. While the simplicial complex $\Delta$ describes the interactions among the random variables, the vector $\rb$ encodes their numbers of states. Let $F_1,\dots,F_s$ be the facets of $\Delta$. Denote $\R=\prod_{i=1}^m[r_i]$ and $\R_{F_k}=\prod_{j\in F_k}[r_j]$ for $k\in[s]$. For $\ib=(i_1,\dots,i_m)\in \R$ let $\ib_{F_k}=(i_j)_{j\in F_k}\in \R_{F_k}$. The \emph{$F_k$-marginal} of each element of $\ZZ^{(\R)}$ can be computed using the linear map 
	$\mu_{F_k}:\ZZ^{(\R)}\to \ZZ^{(\R_{F_k})}$, which 
	is defined by $\mu_{F_k}(\eb_\ib)=\eb_{\ib_{F_k}}$, where $\eb_\ib$ and $\eb_{\ib_{F_k}}$ are standard basis elements of $\ZZ^{(\R)}$ and $ \ZZ^{(\R_{F_k})}$, respectively. The minimal sufficient statistics of the model are then given by the \emph{$\Delta$-marginal map} that puts all facet marginals together:
	\[
	\mu_{\Delta,\rb}:\ZZ^{(\R)}\to \bigoplus_{k=1}^s\ZZ^{(\R_{F_k})},
	\ \ \ub\mapsto (\mu_{F_1}(\ub),\dots,\mu_{F_s}(\ub)).
	\]
	One is interested in Markov bases of the lattice $\ker \mu_{\Delta,\rb}$ as they are useful for performing Fisher's exact test. In particular, it is desirable to know how the structure of these Markov bases depends on $\Delta$ and $\rb$. Ho\c{s}ten and Sullivant \cite{HoS} studied this problem when $\Delta$ and the numbers $r_2,\dots,r_m$ are fixed, while $r_1$ is allowed to vary. Their remarkable finiteness result \cite[Theorem 1.1]{HoS} says that, in such case, there is always a finite Markov basis up to symmetry. To interpret this result in our language, we identify $\R$ with $[r_1]\times [c]$, where $c=\prod_{i=2}^mr_i$. In this way, $\ker \mu_{\Delta,\rb}$ is identified with a lattice $L_{\Delta,\rb}$ in $\ZZ^{([r_1]\times[c])}$. Consider the action of $\Sym(r_1)$ on $\ZZ^{([r_1]\times[c])}$ by permuting the first index as before. Then it is easy to see that the chain of lattices $\L_{\Delta,\rb}=(L_{\Delta,\rb})_{r_1\ge1}$ is $\Sym$-invariant. The result of Ho\c{s}ten and Sullivant can now be stated as a special case of \Cref{cor-stabilizations}(iii) as follows.
	
	\begin{corollary}
		Keep the notation as above. If $\Delta$ and $r_2,\dots,r_m$ are fixed, then the chain of lattices $\L_{\Delta,\rb}=(L_{\Delta,\rb})_{r_1\ge1}$ Markov-stabilizes. 
	\end{corollary}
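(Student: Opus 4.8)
The plan is to recognize the claim as an immediate special case of \Cref{cor-stabilizations}(iii): the entire task is to verify that, once $\Delta$ and $r_2,\dots,r_m$ are fixed, the family $\L_{\Delta,\rb}=(L_{\Delta,\rb})_{r_1\ge1}$ really is a $\Sym$-invariant chain of lattices in $\ZZ^{(I)}$ with $I=\NN\times[c]$ and $c=\prod_{i=2}^m r_i$. Fix once and for all a bijection $\prod_{i=2}^m[r_i]\cong[c]$; since $r_2,\dots,r_m$ do not vary, this bijection is independent of $r_1$, and for each $n=r_1$ it induces an identification $\R=\prod_{i=1}^m[r_i]\cong[n]\times[c]=I_n$, hence $\ZZ^{(\R)}=\ZZ^{(I_n)}$. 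For $n\le n'$ the natural inclusion $[n]\times[c]\hookrightarrow[n']\times[c]$ yields the embeddings $\ZZ^{(I_n)}\subset\ZZ^{(I_{n'})}$ of \Cref{sec-bases}, and $\Sym(n)$ acts on $I_n$ by permuting the first factor and fixing $[c]$, which is exactly the $\Sym(n)$-action on $\ZZ^{(I_n)}$ of \Cref{sec-equi-bases}.

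First I would show the chain is increasing, i.e. $L_{\Delta,(n,r_2,\dots,r_m)}\subseteq L_{\Delta,(n',r_2,\dots,r_m)}$ for $n\le n'$. Write $\R_n$ for the state space $[n]\times[c]$ of the model with first parameter $n$, so that $\R_n\subseteq\R_{n'}$. For each facet $F_k$ of $\Delta$, the facet-marginal map $\mu_{F_k}$ of the model over $\R_{n'}$, restricted along $\ZZ^{(\R_n)}\hookrightarrow\ZZ^{(\R_{n'})}$, coincides with the composition of the facet-marginal map of the model over $\R_n$ with the inclusion of the corresponding marginal spaces. This is immediate from the formula $\mu_{F_k}(\eb_\ib)=\eb_{\ib_{F_k}}$: when $1\notin F_k$ the two marginal maps are literally identical, and when $1\in F_k$ both respect the inclusion $[n]\times\prod_{i\in F_k\setminus\{1\}}[r_i]\hookrightarrow[n']\times\prod_{i\in F_k\setminus\{1\}}[r_i]$. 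Summing over $k$, the map $\mu_{\Delta,(n',\dots)}$ restricted to $\ZZ^{(\R_n)}$ equals $\mu_{\Delta,(n,\dots)}$ followed by inclusions; hence $\ker\mu_{\Delta,(n,\dots)}\subseteq\ker\mu_{\Delta,(n',\dots)}$, which is the claimed containment.

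Next I would verify that $L_{\Delta,\rb}=\ker\mu_{\Delta,\rb}$ is $\Sym(n)$-invariant for $n=r_1$. Since $\sigma\in\Sym(n)$ permutes only the states of $X_1$, for every $\ib\in\R$ one has $(\sigma\ib)_{F_k}=\sigma(\ib_{F_k})$, where $\sigma$ is understood to act trivially on $\R_{F_k}$ when $1\notin F_k$; hence $\mu_{F_k}\circ\sigma=\sigma\circ\mu_{F_k}$ on $\ZZ^{(\R)}$ for all $k$, so $\mu_{\Delta,\rb}$ is $\Sym(n)$-equivariant. An equivariant linear map has $\Sym(n)$-stable kernel, so $L_{\Delta,\rb}$ is $\Sym(n)$-invariant. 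Together with the previous paragraph this shows that $\L_{\Delta,\rb}$ is a $\Sym$-invariant chain of lattices in $\ZZ^{(\NN\times[c])}$, and applying \Cref{cor-stabilizations}(iii) finishes the proof.

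There is no genuine obstacle: the argument is pure bookkeeping with the marginal maps. The two points worth stating carefully are the compatibility of $\mu_{\Delta,\rb}$ with enlarging $r_1$ (which makes the chain increasing) and the fact that the symmetric group here acts only on the single unbounded index $r_1$ while fixing the bounded index $c=\prod_{i=2}^m r_i$, so that the hypotheses of \Cref{cor-stabilizations} are met verbatim. All the substantive content then sits inside \Cref{Finiteness-Graver}, from which \Cref{cor-stabilizations} was derived.
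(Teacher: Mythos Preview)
Your proposal is correct and follows exactly the approach the paper takes: the paper states the corollary as a direct special case of \Cref{cor-stabilizations}(iii) after remarking that ``it is easy to see that the chain of lattices $\L_{\Delta,\rb}=(L_{\Delta,\rb})_{r_1\ge1}$ is $\Sym$-invariant.'' You have simply unpacked that ``easy to see'' by checking that the chain is increasing and that each $L_{\Delta,\rb}$ is $\Sym(r_1)$-invariant, which is precisely the bookkeeping the paper leaves implicit.
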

	
	It should be mentioned that this result generalizes finiteness results for the no three-way interaction model due to Aoki and Takemura \cite{AT} and for logit models due to Santos and Sturmfels \cite{SSt}. A far-reaching generalization of this corollary, the so-called independent set theorem, will be discussed in the next section (see \Cref{independent-set}). 
	
	Let us now turn to the proof of \Cref{Finiteness-Graver}. We first discuss the case $c=1$ before giving a proof for the general case.
	
	\subsection{The case \texorpdfstring{$c=1$}{c=1}}
	We consider the case $c=1$ separately for two reasons. First, in this case, we obtain more specific information about the Graver basis. Second, the proof of this case is more elementary, in the sense that it does not make use of Higman's lemma. It should be mentioned that Higman's lemma is essential to all known proofs of the equivariant Noetherianity of the polynomial ring $K[x_i\mid i\in\NN]$; see \cite[Remark 15]{HM}. 
	
	As before, we denote the standard basis of $\ZZ^{(\NN)}$ by $\{\eb_i\}_{i\in\NN}$. For $\ub=(u_i)_{i\in\NN}\in\ZZ^{(\NN)}$ let 
	$$g(\ub)=\gcd(u_i\mid i\in\NN)$$
	be the greatest common divisor of the components of $\ub$. Moreover, for a lattice $L\subseteq\ZZ^{(\NN)}$ consider the element
	\begin{equation}
		\label{eq-g}
		\gb_L=(g_L,-g_L)=g_L(\eb_1-\eb_2)\in \ZZ^{(\NN)},
		\quad\text{where }\
		g_L=\gcd(g(\ub)\mid \ub\in L).
	\end{equation}
	The role of this element will be made clear in \Cref{thm_Graver}. At first, we observe:
	
	\begin{lemma}
		\label{lem_element}
		Let $L\subseteq\ZZ^{(\NN)}$ be a $\Sym$-invariant lattice and let $\ub=(u_i)_{i\in\NN}\in L.$
		Then the following statements hold:
		\begin{enumerate}
			\item 
			$(u_i,-u_i)\in L$ for all $i\in\NN$.
			\item
			$\gb_L\in L$.
		\end{enumerate}
	\end{lemma}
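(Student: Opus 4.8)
The plan is to exploit the $\Sym$-invariance to move around the nonzero entries of a given $\ub\in L$ and combine translates of $\ub$ to produce the desired elements. For part (i), fix $\ub=(u_i)_{i\in\NN}\in L$ and an index $i\in\NN$. The idea is that, since $\ub$ has finite support, I can choose a permutation $\sigma\in\Sym$ that maps index $i$ to itself and drags every other index of $\supp(\ub)$ into a fixed auxiliary slot, so that $\ub-\sigma(\ub)$ (or a suitable short alternating sum of translates) collapses to something supported on just two indices. More concretely: write $\supp(\ub)=\{i,i_1,\dots,i_k\}$ (the $i_j$ distinct from $i$), pick one further index $a\notin\supp(\ub)\cup\{i\}$, and for each $j$ let $\tau_j\in\Sym$ be the transposition swapping $i_j$ and $a$. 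Then $\ub-\tau_1(\ub)$ already kills the $i_1$-entry at the cost of introducing an $a$-entry; iterating and being careful about overlaps, one arranges a telescoping combination of $\Sym$-translates of $\ub$ lying in $L$ whose net effect is the vector with entry $u_i$ in position $i$, entry $-u_i$ in position $a$, and zero elsewhere. Since $L$ is $\Sym$-invariant it contains every $\sigma(\ub)$, hence it contains this combination; applying a further transposition swapping $a$ with index $2$ (resp.\ $1$) and using invariance again yields $(u_i,-u_i)\in L$ in the notation of the statement, i.e.\ $u_i\eb_i-u_i\eb_{?}$ placed in positions $1$ and $2$ after relabeling. I should present this cleanly, probably by the single clean move: if $j\notin\supp(\ub)$ and $\pi$ is the transposition of $i$ and $j$, then $\ub-\pi(\ub)$ has $i$-entry $u_i$, $j$-entry $-u_i$, and all other entries $0$ — that \emph{is} already the element $(u_i,-u_i)$ up to relabeling, so part (i) is essentially immediate once one picks $j$ outside the (finite) support.

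For part (ii), recall $\gb_L=g_L(\eb_1-\eb_2)$ with $g_L=\gcd(g(\ub)\mid\ub\in L)$. By part (i), for every $\ub\in L$ and every $i$ the element $(u_i,-u_i)=u_i(\eb_1-\eb_2)$ (suitably placed) lies in $L$; since $L$ is a subgroup, the set of integers $m$ with $m(\eb_1-\eb_2)\in L$ is a subgroup of $\ZZ$, hence of the form $g\ZZ$ for some $g\ge0$. It contains every $u_i$ as $\ub$ ranges over $L$ and $i$ over $\NN$, hence it contains $\gcd(u_i\mid i,\ub)=\gcd(g(\ub)\mid\ub\in L)=g_L$. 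Therefore $g_L(\eb_1-\eb_2)=\gb_L\in L$. The only subtlety is the passage from "each individual $u_i$ times $(\eb_1-\eb_2)$ is in $L$" to "the gcd times $(\eb_1-\eb_2)$ is in $L$": this is just the fact that a subgroup of $\ZZ$ containing a generating set of integers contains their gcd (B\'ezout), applied to the subgroup $\{m\in\ZZ : m(\eb_1-\eb_2)\in L\}$, together with the observation that $\gcd$ over all $i$ and all $\ub$ equals $g_L$ by definition of $g(\ub)$ and $g_L$.

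The main (and really only) obstacle is part (i): making the support-reduction argument precise without hand-waving about overlapping indices. I expect the cleanest route is \emph{not} an iterated telescoping but the one-step observation above — choose $j\in\NN\setminus\supp(\ub)$, let $\pi=(i\ j)\in\Sym$, and note $\ub-\pi(\ub)\in L$ has exactly two nonzero entries $u_i$ and $-u_i$ in positions $i$ and $j$; then a final transposition brings these to positions $1$ and $2$. This avoids any induction. I would write (i) in that form, then derive (ii) as above in two or three lines.
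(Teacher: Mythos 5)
Your cleaned-up version of (i) — pick $j\notin\supp(\ub)$, set $\pi=(i\ j)$, observe $\ub-\pi(\ub)=u_i\eb_i-u_i\eb_j\in L$, then relabel to positions $1,2$ — is correct, and it is essentially the paper's argument: the paper likewise writes $(u_i,-u_i)=\sigma_1(\ub)-\sigma_2(\ub)$ for two $\Sym$-translates of $\ub$, just specified explicitly rather than as a transposition composed with a relabeling. Your one-step B\'ezout argument for (ii), via the subgroup $\{m\in\ZZ:m(\eb_1-\eb_2)\in L\}$, collapses the paper's two-stage application of B\'ezout (first to the entries of a single $\ub$ to get $g(\ub)(\eb_1-\eb_2)\in L$, then to the $g(\ub)$'s to get $g_L(\eb_1-\eb_2)\in L$) into a single observation; both are correct, and yours is arguably a hair more economical. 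One small point worth making explicit in a final write-up: when $u_i=0$ the claim is trivial, and otherwise $i\in\supp(\ub)$ so any $j\in\NN\setminus\supp(\ub)$ automatically satisfies $j\ne i$.
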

	
	\begin{proof}
		(i) Note that $u_n=0$ for $n\gg0$. So for any $i\in\NN$, there exist $\sigma_1,\sigma_2\in\Sym$ such that
		\[
		\sigma_1(\ub)=(u_i,0,u_1,u_2,\dots,u_{i-1},u_{i+1},\dots),\
		\sigma_2(\ub)=(0,u_i,u_1,u_2,\dots,u_{i-1},u_{i+1},\dots).
		\]
		Since $L$ is $\Sym$-invariant, we get
		\[
		(u_i,-u_i)=\sigma_1(\ub)-\sigma_2(\ub)\in L.
		\]
		
		(ii) By definition of $g(\ub)$, there exist $i_1,\dots,i_k\in\NN$ and $z_{i_1},\dots,z_{i_k}\in\ZZ$ such that
		$$
		g(\ub)=\sum_{j=1}^kz_{i_j}u_{i_j}.
		$$
		Thus from (i) it follows that 
		$$(g(\ub),-g(\ub))=\sum_{j=1}^kz_{i_j}(u_{i_j},-u_{i_j})\in L.$$ Similarly, by definition of $g_L$, there exist $\vb_1,\dots,\vb_l\in L$ and $y_{1},\dots,y_{l}\in\ZZ$ such that
		\[
		g_L=\sum_{j=1}^ly_{j}g(\vb_{j}).
		\]
		Hence, 
		$$
		\gb_L=(g_L,-g_L)=\sum_{j=1}^ly_{j}(g(\vb_{j}),-g(\vb_{j}))\in L,
		$$
		as desired.
	\end{proof}
	
	As a consequence of the preceding lemma, the next result shows an interesting property of \emph{generic} $\Sym$-invariant lattices in $\ZZ^{(\NN)}$: they always contain certain multiples of standard basis elements.
	
	\begin{corollary}
	\label{Graver-element}
		Let $L\subseteq\ZZ^{(\NN)}$ be a $\Sym$-invariant lattice. For any $\ub=(u_i)_{i\in\NN}\in \ZZ^{(\NN)}$ set $s(\ub)=\sum_{i\in\NN}u_i$. Denote $s_L=\gcd(s(\ub)\mid \ub\in L)$. Then the following hold:
		\begin{enumerate}
			\item 
			$s_L\eb_1\in L.$
			\item
			If there exists $\ub\in L$ such that $s(\ub)\ne0$, then $s_L\eb_1$ belongs to the Graver basis of $L$.
		\end{enumerate}  
	\end{corollary}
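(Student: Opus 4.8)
The plan is to reduce both parts to the single claim that $s(\ub)\,\eb_1\in L$ for every individual $\ub\in L$, and then to obtain $s_L\eb_1\in L$ by the same greatest-common-divisor bookkeeping used in the proof of \Cref{lem_element}(ii).

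I would first fix $\ub=(u_i)_{i\in\NN}\in L$. By \Cref{lem_element}(i) we have $(u_i,-u_i)=u_i(\eb_1-\eb_2)\in L$ for every $i$; applying to this the transposition of $\Sym$ that swaps $2$ and $i$ (it fixes $1$) and using the $\Sym$-invariance of $L$, we get $u_i(\eb_1-\eb_i)\in L$ for all $i\ge2$. Adding these elements to $\ub$ telescopes to
\[
s(\ub)\,\eb_1=\ub+\sum_{i\ge2}u_i(\eb_1-\eb_i)\in L,
\]
which one checks coordinatewise: coordinate $1$ equals $u_1+\sum_{i\ge2}u_i=s(\ub)$, and coordinate $j\ge2$ equals $u_j-u_j=0$.

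For part (i), following the proof of \Cref{lem_element}(ii), I would choose $\vb_1,\dots,\vb_l\in L$ and $y_1,\dots,y_l\in\ZZ$ with $s_L=\sum_{j=1}^l y_j\,s(\vb_j)$, which is possible since $\{s(\vb)\mid\vb\in L\}$ generates the ideal $s_L\ZZ$ of $\ZZ$. Then $s_L\eb_1=\sum_{j=1}^l y_j\,(s(\vb_j)\,\eb_1)\in L$ by the claim above. For part (ii), the hypothesis provides $\ub\in L$ with $s(\ub)\ne0$, so $s_L\ne0$; taking $s_L>0$, part (i) gives $s_L\eb_1\in L\setminus\{\nub\}$, and it remains to verify that $s_L\eb_1$ is $\sqsubseteq$-minimal in $L\setminus\{\nub\}$. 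Any $\vb\in L$ with $\vb\sqsubseteq s_L\eb_1$ must satisfy $v_i=0$ for $i\ge2$ and $0\le v_1\le s_L$, so $\vb=a\eb_1$ with $0\le a\le s_L$; if moreover $\vb\ne\nub$, then $a\ge1$ while $s_L$ divides $s(\vb)=a$, forcing $a=s_L$. Hence no element of $L\setminus\{\nub\}$ lies strictly below $s_L\eb_1$, i.e. $s_L\eb_1$ belongs to the Graver basis of $L$ by \Cref{def:bases}(v).

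I do not foresee a genuine obstacle here; the proof is short. The only mildly nonobvious step is the telescoping observation that each $u_i(\eb_1-\eb_i)$ already lies in $L$, which is precisely where the $\Sym$-invariance of $L$ and \Cref{lem_element} enter, and the passage from the single-element statement to $s_L\eb_1\in L$, handled by the gcd argument exactly as in \Cref{lem_element}.
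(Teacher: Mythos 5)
Your proof is correct and essentially the same as the paper's: both reduce to showing $s(\ub)\eb_1\in L$ for each $\ub\in L$ via telescoping with elements of the form $c(\eb_i-\eb_j)\in L$ obtained from \Cref{lem_element}(i) and $\Sym$-invariance, then pass to $s_L$ via a $\ZZ$-linear combination, and part (ii) is verbatim the paper's divisibility argument. The only cosmetic difference is that you perform the telescoping in a single step by summing $u_i(\eb_1-\eb_i)$ over $i\ge 2$, whereas the paper iteratively collapses the last coordinate into the previous one.
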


	\begin{proof}
		(i) Take any $\ub\in L$ and choose $n$ such that $u_i=0$ for all $i\ge n$, i.e. $\ub=(u_1,\dots,u_n)$.
		\Cref{lem_element} implies $\fb=u_n(\eb_{n-1}-\eb_n)=(0,\dots,0,u_{n},-u_{n})\in L.$
		Hence, 
		\[
		\ub'=\ub+\fb
		=(u_1,\dots,u_{n-2},u_{n-1}+u_{n})
		\in L.
		\]
		Repeating the above argument, we see that $s(\ub)\eb_1\in L.$ Since $s_L$ is a $\ZZ$-linear combination of finitely many $s(\ub)$ with $\ub\in L$, the desired conclusion follows.
		
		(ii) If $s(\ub)\ne0$ for some $\ub\in L$, then $s_L\ne 0$. In this case, $s_L\eb_1$ is a $\sqsubseteq$-minimal element of $L\setminus\{\nub\}$. Indeed, let $\vb\in L\setminus\{\nub\}$ with $\vb\sqsubseteq s_L\eb_1$. Then $\vb$ must be of the form $z\eb_1$ for some $z\in\ZZ$ with $0<z\le s_L$. Since $s(\vb)=z$ is divisible by $s_L$, we deduce that $z=s_L$, and thus $\vb=s_L\eb_1$, as desired.
	\end{proof}
	
	In order to determine the Graver basis of a $\Sym$-invariant lattice $L\subseteq\ZZ^{(\NN)}$, we relate it to the Hilbert basis of the monoid $M=L\cap\ZZ^{(\NN)}_{\ge0}$; see \Cref{lem_Hilbert_monoid,prop_Hilbert_Graver} for analogous ideas. The following key lemma is reminiscent of the equivariant Gordan's lemma established in \cite[Theorem 6.1]{LR21}.
	
	\begin{lemma}
		\label{lem_lattice_monoid}
		For every $\Sym$-invariant lattice $L\subseteq\ZZ^{(\NN)}$, the monoid $M=L\cap\ZZ^{(\NN)}_{\ge0}$ has a finite equivariant Hilbert basis.
	\end{lemma}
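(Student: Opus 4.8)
The plan is to exploit the fact that, when $d=1$, the $\Sym$-invariant lattices in $\ZZ^{(\NN)}$ admit a very simple classification, from which both the Hilbert basis of $M=L\cap\ZZ_{\ge0}^{(\NN)}$ and the finiteness of its set of $\Sym$-orbits can be read off directly. First I would dispose of a degenerate case: if $s_L=0$ then every element of $L$ has component sum $0$, so $M=\{\nub\}$ and there is nothing to prove. Hence assume $s_L>0$; then $L\ne\{\nub\}$, so $g_L>0$, and since $s_L\eb_1\in L$ by \Cref{Graver-element}(i) we get $g_L\mid s_L$.

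The key step is to prove the classification
\[
L=\Bigl\{\,\ub=(u_i)_{i\in\NN}\in\ZZ^{(\NN)}\ \Bigm|\ g_L\mid u_i\ \text{ for all }i,\ \text{ and }\ s_L\ \text{ divides }\ \textstyle\sum_{i\in\NN}u_i\,\Bigr\}.
\]
The inclusion ``$\subseteq$'' is immediate from the definitions, since $g_L\mid g(\ub)\mid u_i$ for every $\ub\in L$ and $s_L\mid\sum_i u_i$ by definition of $s_L$. For ``$\supseteq$'', take $\ub$ on the right-hand side and write $\sum_i u_i=ks_L$. Then $ks_L\eb_1\in L$ (again by \Cref{Graver-element}(i)), and $\ub-ks_L\eb_1$ has component sum $0$ with all entries divisible by $g_L$ (here $g_L\mid s_L$ is used), so it is a $\ZZ$-linear combination of the vectors $g_L(\eb_i-\eb_j)$, $i\ne j$; each of these lies in $L$ because $\gb_L=g_L(\eb_1-\eb_2)\in L$ by \Cref{lem_element}(ii) and $L$ is $\Sym$-invariant. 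Hence $\ub-ks_L\eb_1\in L$, so $\ub\in L$.

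With the classification in hand, set $t=s_L/g_L\in\NN$. I would then show that the Hilbert basis of $M$ is exactly $\Hc=\{\ub\in\ZZ_{\ge0}^{(\NN)}\mid g_L\mid u_i\text{ for all }i,\ \sum_i u_i=s_L\}$. Every $\ub\in\Hc$ is irreducible in $M$: in any decomposition $\ub=\vb+\wb$ with $\vb,\wb\in M$, the sums $\sum_i v_i$ and $\sum_i w_i$ are nonnegative multiples of $s_L$ adding up to $s_L$, forcing one summand to be $\nub$. Conversely $\Hc$ generates $M$: given $\ub\in M$ with $\sum_i u_i=ks_L$, writing $u_i=g_L v_i$ one has $\sum_i v_i=kt\ge t$, so one can choose $a_i\in\ZZ_{\ge0}$ with $a_i\le v_i$ and $\sum_i a_i=t$; then $\hb=(g_L a_i)_i\in\Hc$ and $\ub-\hb\in M$ has strictly smaller component sum, and induction finishes the argument (cf.\ \Cref{lem_Hilbert_monoid}(i)). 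Finally, two elements of $\Hc$ lie in the same $\Sym$-orbit exactly when they have the same multiset of nonzero entries, and such multisets are in bijection with the partitions of $t$; in particular $\Hc$ meets only finitely many $\Sym$-orbits. Picking one representative from each orbit yields a finite set $\Hc_0\subseteq M$ with $\Sym(\Hc_0)=\Hc$, i.e.\ a finite equivariant Hilbert basis of $M$.

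I expect the classification to be the only place that requires an idea; everything afterwards is elementary divisibility bookkeeping, and, as promised in the section introduction, Higman's lemma is not needed. The one pitfall to watch is the degenerate case $L=\{\nub\}$ (equivalently $s_L=0$), which must be separated out before one is allowed to divide by $g_L$. Alternatively, instead of describing $\Hc$ explicitly one could invoke \Cref{stabilization-monoid}: the classification shows that every irreducible element $\ub$ of $M$ satisfies $\sum_i u_i=s_L$ (if $\sum_i u_i\ge 2s_L$ the greedy splitting above gives a proper decomposition), hence $|\supp(\ub)|\le s_L/g_L$, which is exactly condition (iii)(b) of that theorem, condition (iii)(a) being trivial here.
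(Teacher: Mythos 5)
Your proof is correct, and it takes a genuinely different route from the paper's. The paper proceeds locally: it truncates to $M_n = L\cap\ZZ_{\ge0}^n$, shows by an explicit coordinate-merging argument that the norms $\|\Hc_n\|$ of the finite local Hilbert bases are non-increasing in $n$, and then invokes the local-global result \Cref{stabilization-monoid} to conclude. You instead establish the clean global classification
\[
L=\bigl\{\ub\in\ZZ^{(\NN)}\ \big|\ g_L\mid u_i\ \text{for all}\ i,\ \text{and}\ s_L\mid\textstyle\sum_i u_i\bigr\},
\]
valid once the degenerate case $s_L=0$ (where $M=\{\nub\}$) is set aside, and read off the Hilbert basis of $M$ directly as the set of nonnegative vectors with entries in $g_L\ZZ$ summing to $s_L$; its $\Sym$-orbits biject with partitions of $t=s_L/g_L$, hence are finite in number. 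I checked the details of your classification: the containment $\subseteq$ is definitional, and $\supseteq$ works because $s_L\eb_1\in L$ (\Cref{Graver-element}(i)) handles the ``sum'' part, while the ``sum-zero'' remainder is an integer combination of the vectors $g_L(\eb_i-\eb_j)$, each of which lies in $L$ by \Cref{lem_element}(ii) together with $\Sym$-invariance. The irreducibility and generation arguments for $\Hc$ are also correct. Your approach buys a sharper structural fact not made explicit in the paper, namely that $\Sym$-invariant lattices in $\ZZ^{(\NN)}$ are classified by the pair of invariants $(g_L,s_L)$ with $g_L\mid s_L$, and it yields the explicit support bound $t=s_L/g_L$ (slightly better than $s_L$); the paper's inductive argument is less informative but is phrased so as to fit the general local-global machinery. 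Both proofs, as the paper promises, avoid Higman's lemma.
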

	
	\begin{proof}
		Set $L_n=L\cap \ZZ^n$ and $M_n=L_n\cap\ZZ^{n}_{\ge0}$ for $n\ge1$. Moreover, let $\Hc_n$ denote the Hilbert basis of $M_n$. Obviously, $\Mk=(M_n)_{n\ge1}$ is a $\Sym$-invariant chain of nonnegative monoids with limit $\bigcup_{n\ge1}M_n=M$. So by \Cref{stabilization-monoid}, it suffices to show that each $\Hc_n$ is finite and the support sizes of its elements do not exceed a fixed number $q$. First of all, $L_n$ is a finitely generated monoid as it is a finitely generated abelian group. Hence, $M_n=L_n\cap\ZZ^{n}_{\ge0}$ is also a finitely generated monoid; see, e.g. \cite[Corollary 2.11(a)]{BG}. This implies that
		$\Hc_n$ is finite for all $n\ge 1$. Set
		\[
		\|\Hc_{n}\|=\max\{\|\ub\|\mid \ub\in \Hc_{n}\}.
		\]
		Evidently, $|\supp(\ub)|\le \|\ub\|$ for all $\ub\in\ZZ^{(\NN)}$. Thus, in order to verify that the support sizes of elements of $\Hc_n$ are uniformly bounded, we only need to show that
		$\|\Hc_{n+1}\|\le \|\Hc_{n}\|$ 
		for all $n\ge 1.$ To this end, take any 
		$\ub=(u_1,\dots,u_{n+1})\in \Hc_{n+1}$. From \Cref{lem_element} it follows that
		\[
		\fb=u_{n+1}(\eb_n-\eb_{n+1})=(0,\dots,0,u_{n+1},-u_{n+1})\in L.
		\]
		Hence,
		\[
		\ub'=\ub+\fb
		=(u_1,\dots,u_{n-1},u_{n}+u_{n+1})
		\in L\cap\ZZ^{n}_{\ge0} = M_{n}.
		\]
		Since $\|\ub\|=\|\ub'\|$, the desired inequality will follow if we show that $\ub'\in\Hc_{n}.$ Assume the contrary that $\ub'\not\in\Hc_{n}$. Then there exist 
		$\vb'=(v_1,\dots,v_{n}),\wb'=(w_1,\dots,w_{n})\in M_{n}\setminus\{\nub\}$ such that $\ub'=\vb'+\wb'.$ This gives $u_{n}+u_{n+1}=v_{n}+w_{n}.$ We may suppose $u_{n}\le v_{n}.$ Since $\ub-\vb'\in L$, \Cref{lem_element} yields
		\[
		\fb'=(u_n-v_{n})(\eb_n-\eb_{n+1})
		=(0,\dots,0,u_n-v_{n},v_{n}-u_{n})
		\in L.
		\]
		It follows that
		\[
		\vb=\vb'+\fb'
		=(v_1,\dots,v_{n-1},u_{n},v_{n}-u_{n})
		\in L\cap\ZZ^{n+1}_{\ge0} = M_{n+1}.
		\]
		On the other hand, $\wb=(w_1,\dots,w_{n-1},0,w_{n})\in M_{n+1}$ since $\wb'=(w_1,\dots,w_{n},0)\in M_{n+1}.$ Now from $\ub'=\vb'+\wb'$ we get $\ub=\vb+\wb$, contradicting the assumption that $\ub\in \Hc_{n+1}$.
	\end{proof}
	
	We are now ready to prove the following more specific version of \Cref{Finiteness-Graver} in the case $c=1$.
	
	\begin{theorem}
		\label{thm_Graver}
		Let $L\subseteq\ZZ^{(\NN)}$ be a $\Sym$-invariant lattice. Suppose that $\Hc$ is a finite equivariant Hilbert basis of the monoid $M=L\cap\ZZ^{(\NN)}_{\ge0}$. Then $L$ has a finite equivariant Graver basis $\G$ satisfying
		\[
		\pm\Hc\subseteq\G\subseteq\pm\Hc\cup\{\pm\gb_L\},
		\]
		where $\gb_L$ is defined as in \eqref{eq-g}.
	\end{theorem}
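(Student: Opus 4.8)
The plan is to describe the Graver basis $\G$ of $L$ explicitly in terms of $\pm\Hc$ and the element $\gb_L$, and to verify both that each element I list is $\sqsubseteq$-minimal in $L\setminus\{\nub\}$ and that every $\sqsubseteq$-minimal element of $L\setminus\{\nub\}$ appears (up to nothing — the set must be exact). First I would recall from \Cref{lem_Hilbert_monoid}(ii) that $\Hc=\G\cap\ZZ_{\ge0}^{(\NN)}$, so necessarily $\pm\Hc\subseteq\G$: if $\ub\in\Hc$ then $\ub$ is $\sqsubseteq$-minimal in $M\setminus\{\nub\}$, hence $\sqsubseteq$-minimal in $L\setminus\{\nub\}$ (a conformal decomposition of $\ub$ inside $L$ would have both summands nonnegative, hence in $M$), and likewise $-\ub$ is $\sqsubseteq$-minimal by symmetry of $\sqsubseteq$ under negation. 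Since $\Hc$ is finite and $\Sym$-equivariant (i.e. $\Sym(\Hc)$ is the full Hilbert basis of $M$), the set $\pm\Hc\cup\{\pm\gb_L\}$ is a finite equivariant subset of $L$, and $\gb_L\in L$ by \Cref{lem_element}(ii). So the whole content is the second inclusion $\G\subseteq\pm\Hc\cup\{\pm\gb_L\}$, i.e. every $\sqsubseteq$-minimal element of $L\setminus\{\nub\}$ is either $\pm$ an irreducible of $M$ or equals $\pm\gb_L$.

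The key step is therefore: let $\ub\in L\setminus\{\nub\}$ be $\sqsubseteq$-minimal and suppose $\ub$ has both a positive and a negative component (otherwise $\pm\ub\in M\setminus\{\nub\}$, and $\sqsubseteq$-minimality in $L$ forces $\sqsubseteq$-minimality in $M$, so $\pm\ub\in\Hc$ and we are done). I claim $\supp(\ub)$ has exactly two elements and $\ub=\pm\gb_L$. To see the support size is two: write $\ub^+=\sum a_i\eb_i$ and $\ub^-=\sum b_j\eb_j$ with disjoint supports. Using \Cref{lem_element}(i), for any index $i$ with $u_i\neq0$ the element $(u_i,-u_i)$ — meaning $u_i(\eb_i-\eb_k)$ for a fresh index $k$, suitably placed by a permutation — lies in $L$; more usefully, for indices $i,j$ with $u_i>0>u_j$ the vector $\eb_i$ appearing in $\ub^+$ and $\eb_j$ in $\ub^-$, one can produce, via \Cref{lem_element}(i) and $\Sym$-invariance, the ``transfer'' element $t(\eb_i-\eb_j)\in L$ for appropriate $t$ dividing the relevant entries; subtracting it from $\ub$ yields an element $\vb\in L$ with $\vb\sqsubset\ub$ unless $\ub$ was already supported on $\{i,j\}$ with matching absolute values. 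Carefully: if $|\supp(\ub)|\geq 3$, pick $i$ with $u_i>0$ and $j\neq k$ with $u_j,u_k<0$ (or the mirror situation), set $t=\min(u_i,|u_j|)$ and observe $t(\eb_i-\eb_j)\in L$ by \Cref{lem_element}(i) plus invariance; then $\ub - t(\eb_i-\eb_j)=:\vb\in L\setminus\{\nub\}$ and $\vb\sqsubset\ub$ because we strictly decreased $|u_i|$ or $|u_j|$ without touching signs elsewhere and without creating a sign clash (the new $i$-entry is $u_i-t\geq 0$, the new $j$-entry is $u_j+t\leq 0$). This contradicts minimality, so $|\supp(\ub)|=2$, say $\supp(\ub)=\{i,j\}$ with $u_i>0>u_j$; conformality with $\sqsubseteq$-minimality then forces $u_i=|u_j|$, so $\ub=u_i(\eb_i-\eb_j)$, which is $\Sym$-equivalent to $u_i(\eb_1-\eb_2)$.

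It remains to pin down $u_i$. From $\ub\in L$ we get $u_i(\eb_1-\eb_2)\in L$ up to a permutation, so $g_L\mid \gcd(\text{entries of }\ub)$ divides $u_i$; conversely $\gb_L=g_L(\eb_1-\eb_2)\sqsubseteq u_i(\eb_1-\eb_2)$, and by $\sqsubseteq$-minimality of $\ub$ (transported to this support) we must have $u_i=g_L$, i.e. $\ub=\pm\gb_L$. The main obstacle — and the place to be careful — is the support-reduction argument: one must check that the transfer element $t(\eb_i-\eb_j)$ genuinely lies in $L$ (this is exactly \Cref{lem_element}(i) applied after moving indices by a permutation and taking an integer combination, as in the proof of \Cref{lem_element}(ii)), and that subtracting it produces a \emph{strict} $\sqsubseteq$-predecessor rather than accidentally landing on $\nub$ or flipping a sign; handling the case distinctions (whether the ``extra'' support element is positive or negative, whether $t$ exhausts one of the two entries) cleanly is the only real bookkeeping. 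Finally, finiteness and equivariance of $\G$ are immediate: $\G\subseteq\pm\Hc\cup\{\pm\gb_L\}$ is finite, and since each listed element is $\sqsubseteq$-minimal in $L$ and the full Graver basis is $\Sym$-invariant with $\pm\Sym(\Hc)$ already accounting for all irreducibles-of-$M$ contributions while $\pm\gb_L$ is fixed by $\Sym$ up to orbit, $\Sym(\G)$ is exactly the Graver basis of $L$.
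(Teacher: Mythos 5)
Your proof is correct, but it takes a genuinely different and somewhat longer route than the paper's. The paper handles an arbitrary $\ub\in L\setminus(\pm M\cup\{\nub\})$ all at once: after permuting so that $1\in\supp(\ub^+)$, $2\in\supp(\ub^-)$, and $|u_1|\le|u_2|$, it exhibits the conformal decomposition $\ub=\vb+\wb$ with $\vb=(u_1,-u_1)\in L$ (by \Cref{lem_element}(i)) and $\wb=(0,u_1+u_2,u_3,\dots)$, and then observes that $\gb_L$ divides $\vb$, so $\pm\gb_L\sqsubseteq\vb\sqsubseteq\ub$. That single observation already proves $\G_0\subseteq\Sym(\pm\Hc\cup\{\pm\gb_L\})$ without any case analysis. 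You instead assume $\ub$ is $\sqsubseteq$-minimal, run a support-reduction argument (subtracting a ``transfer'' element $t(\eb_i-\eb_j)$ with $t=\min(u_i,|u_j|)$, which does lie in $L$ since $t$ is, up to sign, one of the entries of $\ub$, so \Cref{lem_element}(i) plus a permutation applies) to force $|\supp(\ub)|=2$, then force $u_i=|u_j|$ by a second minimality argument, and finally force $u_i=g_L$ by a third. The bookkeeping in your support-reduction step is handled correctly -- the untouched third entry guarantees $\vb\ne\nub$, and the sign constraints guarantee a strict $\sqsubseteq$-predecessor -- and the ``appropriate $t$'' you invoke is indeed directly furnished by \Cref{lem_element}(i) without needing the integer-combination step of \Cref{lem_element}(ii). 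What your route buys is slightly more structural information (you learn explicitly that every Graver element outside $\pm M$ has support of size exactly $2$), but at the cost of three separate minimality arguments and a case split on sign patterns; the paper's single conformal-decomposition step subsumes all of this.
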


	\begin{proof}
		First, note that $\Hc$ exists by \Cref{lem_lattice_monoid}. Moreover, it follows from \Cref{lem_Hilbert_monoid} that every element of $\pm\Hc$ is $\sqsubseteq$-minimal in $L\setminus\{\nub\}$. So to prove the theorem, it suffices to show that $\Sym(\Hc')$ contains all $\sqsubseteq$-minimal elements of $L\setminus\{\nub\}$, where $\Hc'=\pm\Hc\cup\{\pm\gb_L\}$. Take
		any $\ub\in L\setminus\{\nub\}$. We need to prove  that $\sigma(\hb)\sqsubseteq \ub$ for some $\sigma\in\Sym$ and $\hb\in\Hc'.$ 
		
		If $\ub\in M$ or $-\ub\in M$, then we are done since $\Hc$ is an equivariant Hilbert basis of $M$. 
		Now suppose $\ub=(u_i)_{i\in\NN}\not\in\pm M$. Then $\supp(\ub^+),\supp(\ub^-)\ne\emptyset$. Replacing $\ub$ with $\pm\tau(\ub)$ for a suitable $\tau\in\Sym$ (if necessary), we may assume that $1\in\supp(\ub^+)$, $2\in\supp(\ub^-)$ and $|u_1|\le |u_2|$. Then $\ub=\vb+\wb$  is a conformal decomposition of $\ub$, where
		\[
		\vb=(u_1,-u_1)
		\quad\text{and}\quad
		\wb=(0,u_1+u_2,u_3,\dots).
		\]
		From the definition of $\gb_L$ it is clear that $\vb=z\gb_L$ for some $z\in\ZZ\setminus\{0\}$. Hence, we have either $\gb_L\sqsubseteq\vb\sqsubseteq\ub$ or $-\gb_L\sqsubseteq\vb\sqsubseteq \ub.$ This completes the proof.
	\end{proof}
	
	\begin{example}
		Consider the lattice $L=\ZZ\Sym(\{\ub,\vb\})\subseteq\ZZ^{(\NN)}$ generated by the $\Sym$-orbits of the elements
		\[
		\ub=(1,3,5)\ \text{ and }\ \vb= (2,4,6).
		\]
		Let $M=L\cap\ZZ^{(\NN)}_{\ge0}$. Denote by $\G$ an equivariant Graver basis of $L$. Then it follows from \Cref{lem_Hilbert_monoid} that $\Hc=\G\cap\ZZ^{(\NN)}_{\ge0}$ is an equivariant Hilbert basis of $M$. By \Cref{Graver-element}, we may assume that $3\eb_1\in \G$ since $s_L=\gcd(s(\ub),s(\vb))=3.$ Hence, $3\eb_1\in \Hc$. So the proof of \Cref{lem_lattice_monoid} implies that we may choose $\Hc\subseteq\{3\eb_1,\eb_1+2\eb_2\}$. On the other hand, by \Cref{thm_Graver}, $\gb_L=\eb_1-\eb_2$ could be an element of $\G$. Computations with 4ti2 \cite{4ti2} and Macaulay2 \cite{GS} confirm that we can take $\Hc=\{3\eb_1,\eb_1+2\eb_2\}$ and $\G=\pm\Hc\cup\{\pm\gb_L\}$.
		
		Note that, in general, $\gb_L$ is not necessarily an element of the Graver basis of $L$. An obvious example is $L=\ZZ^{(\NN)}$, of which $\G=\{\pm\eb_1\}$ is an equivariant Graver basis.
	\end{example}
	
	\subsection{The general case}
	\label{sec-Graver}
	A key step in the proof of \Cref{Finiteness-Graver}, as in the case $c=1$, is to relate the Graver basis of a lattice to the Hilbert basis of a certain monoid. We present this relationship in the general setting as it might be of independent interest.
	
	Let $I$ be an arbitrary set. Denote by $2I$ the disjoint union $I\sqcup I$. Then there is a natural isomorphism of abelian groups
	$$\psi:\ZZ^{(2I)}\cong \ZZ^{(I)}\oplus\ZZ^{(I)}.$$
	Identifying $\ZZ^{(2I)}$ with $\ZZ^{(I)}\oplus\ZZ^{(I)}$ using this isomorphism, we write each element of $\ZZ^{(2I)}$ in the form $(\ub,\vb)$ with $\ub,\vb\in\ZZ^{(I)}$. Consider the map
	\[
	\varphi: \ZZ^{(2I)}\to \ZZ^{(I)},\ 
	(\ub,\vb)\mapsto \ub-\vb.
	\]
	Via this map, the Graver basis of any lattice in $\ZZ^{(I)}$ is intimately related to the Hilbert basis of a nonnegative monoid in $\ZZ^{(2I)}$. As usual, we denote standard basis of $\ZZ^{(I)}$ by $\{\eb_\ib\}_{\ib\in I}$.

	\begin{proposition}
		\label{prop_Hilbert_Graver}
		Let $L\subseteq\ZZ^{(I)}$ be a lattice and set $M=\varphi^{-1}(L)\cap \ZZ^{(2I)}_{\ge0}$. Denote by $\G$ the Graver basis of $L$ and $\Hc$ the Hilbert basis of $M$. Then $L=\varphi(M)$. Moreover, the following hold:
		\begin{enumerate}
			\item 
			$\G=\varphi(\Hc)\setminus\{\nub\}$.
			\item
			$\Hc=\{(\ub^+,\ub^-)\mid \ub=\ub^+-\ub^-\in \G\}\cup\{(\eb_\ib,\eb_\ib)\mid \ib\in I\}$.
		\end{enumerate}
	\end{proposition}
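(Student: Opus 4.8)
The plan is to run a Lawrence-lifting style argument, using two facts from the preceding material: that $\G$ consists exactly of the elements of $L\setminus\{\nub\}$ admitting no conformal decomposition (the Remark after \Cref{def:bases}), and that, since $M\subseteq\ZZ^{(2I)}_{\ge0}$, the Hilbert basis $\Hc$ is precisely the set of irreducible elements of $M$ (\Cref{lem_Hilbert_monoid}(i)). The equality $L=\varphi(M)$ is immediate: $\varphi(M)\subseteq L$ because $M\subseteq\varphi^{-1}(L)$, and conversely every $\wb\in L$ equals $\varphi(\wb^+,\wb^-)$ with $(\wb^+,\wb^-)\in\ZZ^{(2I)}_{\ge0}$, hence $(\wb^+,\wb^-)\in M$. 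Statement (i) will then follow from (ii) by applying $\varphi$: the first set in (ii) is carried bijectively onto $\G$ since $\varphi(\ub^+,\ub^-)=\ub$, while each $(\eb_\ib,\eb_\ib)$ is carried to $\nub\notin\G$, so $\varphi(\Hc)\setminus\{\nub\}=\G$. Thus the content is (ii), and the structural input I would exploit throughout is that $\ub^+$ and $\ub^-$ have disjoint supports for every $\ub\in\ZZ^{(I)}$.

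For the inclusion $\supseteq$ in (ii) I would argue as follows. Each $(\eb_\ib,\eb_\ib)$ lies in $M$ since $\varphi(\eb_\ib,\eb_\ib)=\nub\in L$, and it is irreducible: in any decomposition $(\eb_\ib,\eb_\ib)=(\ab_1,\bb_1)+(\ab_2,\bb_2)$ in $M$, nonnegativity forces one of $\ab_1,\ab_2$ — say $\ab_1$ — to be $\nub$, whereupon $\ab_1-\bb_1=-\bb_1\in L$ together with $\bb_1\le\eb_\ib$ forces $\bb_1=\nub$, so that summand vanishes (barring the degenerate case $\eb_\ib\in L$, discussed below). Next, for $\ub\in\G$ the element $(\ub^+,\ub^-)$ is a nonzero member of $M$; if $(\ub^+,\ub^-)=(\ab_1,\bb_1)+(\ab_2,\bb_2)$ in $M$ with both summands nonzero, then disjointness of $\supp(\ub^+)$ and $\supp(\ub^-)$ forces each $\ab_k$ to be supported on $\supp(\ub^+)$ and each $\bb_k$ on $\supp(\ub^-)$, so $\wb_k\defas\ab_k-\bb_k\in L$ has $\wb_k^+=\ab_k$, $\wb_k^-=\bb_k$ and $\wb_k\ne\nub$, giving a conformal decomposition $\ub=\wb_1+\wb_2$ — contradicting $\ub\in\G$. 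Hence $(\ub^+,\ub^-)$ is irreducible, i.e.\ lies in $\Hc$.

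For the reverse inclusion, I would take $(\ab,\bb)\in\Hc$, set $\wb=\ab-\bb\in L$, and use that coordinatewise $\wb^+\le\ab$ and $\wb^-\le\bb$ to write $(\ab,\bb)=(\wb^+,\wb^-)+(\ab-\wb^+,\bb-\wb^-)$ inside $M$, the second summand lying in $M$ because it maps to $\nub$ under $\varphi$. Irreducibility forces one summand to vanish. If $\wb=\nub$, i.e.\ $\ab=\bb\ne\nub$, then any splitting $\ab=\ab'+\ab''$ into nonzero nonnegative parts would give $(\ab,\ab)=(\ab',\ab')+(\ab'',\ab'')$ in $M$, so irreducibility forces $\ab=\eb_\ib$ for some $\ib$. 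Otherwise $(\ab,\bb)=(\wb^+,\wb^-)$ with $\wb\in L\setminus\{\nub\}$, and $\wb$ admits no conformal decomposition, since a conformal $\wb=\wb_1+\wb_2$ would split $(\wb^+,\wb^-)$ into two nonzero members of $M$; thus $\wb\in\G$, putting $(\ab,\bb)$ in the first set of (ii).

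I expect the main difficulty to be precisely this two-way bookkeeping: in each direction one must invoke the disjointness of the positive and negative supports of a lattice vector to see that an $M$-decomposition descends to a conformal $L$-decomposition and conversely, and one must treat the fiber of $\varphi$ over $\nub$ — the elements $(\eb_\ib,\eb_\ib)$ — on its own. A minor caveat to keep in mind is that when $\eb_\ib$ itself belongs to $L$ the element $(\eb_\ib,\eb_\ib)$ is reducible (it splits as $(\eb_\ib,\nub)+(\nub,\eb_\ib)$), while then $\pm\eb_\ib\in\G$ and the two halves $(\eb_\ib,\nub),(\nub,\eb_\ib)$ enter $\Hc$ through the first set instead; so in this degenerate situation the $(\eb_\ib,\eb_\ib)$ term should be dropped from (ii).
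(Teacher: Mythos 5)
Your argument is correct and follows essentially the same Lawrence-lifting route as the paper: both proofs hinge on the disjointness of $\supp(\ub^+)$ and $\supp(\ub^-)$ to translate $M$-decompositions into $L$-decompositions and back, and both treat the fiber $\varphi^{-1}(\nub)$ separately. The only technical difference is one of packaging: where the paper reasons via $\sqsubseteq$-minimal elements (picking $(\vb,\wb)\in\Hc$ with $(\vb,\wb)\sqsubseteq(\ub^+,\ub^-)$ and forcing equality), you reason directly with the conformal-decomposition characterization of $\G$ and the irreducible-element characterization of $\Hc$, splitting $(\ab,\bb)$ as $(\wb^+,\wb^-)+(\ab-\wb^+,\bb-\wb^-)$. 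These are interchangeable and buy nothing over one another.

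The more substantive point is your final caveat, which is not cosmetic: it identifies a genuine gap in the paper. The paper's proof of (ii) opens with ``Evidently, $(\eb_\ib,\eb_\ib)\in\Hc$ for all $\ib\in I$,'' but this fails precisely when $\eb_\ib\in L$: then $(\eb_\ib,\nub)$ and $(\nub,\eb_\ib)$ both lie in $M$, so $(\eb_\ib,\eb_\ib)=(\eb_\ib,\nub)+(\nub,\eb_\ib)$ is reducible and hence not in $\Hc$. For instance, with $I$ a singleton and $L=\ZZ$ one has $M=\ZZ_{\ge0}^2$, whose Hilbert basis is $\{(1,0),(0,1)\}$, whereas the right-hand side of (ii) would also include $(1,1)$. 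So (ii) as stated should either exclude those $\ib$ with $\eb_\ib\in L$, or be read as a containment $\Hc\subseteq\{(\ub^+,\ub^-)\mid\ub\in\G\}\cup\{(\eb_\ib,\eb_\ib)\mid\ib\in I\}$. Part (i), which is what the rest of the paper actually uses (in \Cref{cor_Hilbert_Graver}), is unaffected.
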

	
	\begin{proof}
		Since every element $\ub\in L$ can be written in the form $\ub=\ub^+-\ub^-$, we see that $\ub=\varphi((\ub^+,\ub^-))\in \varphi(M)$. Thus $\varphi(M)=L.$ 
		
		(i) Let $\ub\in \G$. Then $(\ub^+,\ub^-)\in M\setminus\{\nub\}$. So by \Cref{lem_Hilbert_monoid}, there exists $(\vb,\wb)\in\Hc$ such that  $(\vb,\wb)\sqsubseteq (\ub^+,\ub^-)$. This means that $\vb\sqsubseteq \ub^+$ and $\wb\sqsubseteq \ub^-$. In particular, $\vb\ne\wb$ as $\ub^+$ and $\ub^-$ have disjoint supports. Hence,
		\[
		\nub\ne\varphi((\vb,\wb))= \vb-\wb
		\sqsubseteq \ub^+-\ub^-=\ub.
		\]
		Since $\ub\in \G$, we must have $\ub=\varphi((\vb,\wb))\in\varphi(\Hc)\setminus\{\nub\}$.
		
		Conversely, take $\vb-\wb\in \varphi(\Hc)\setminus\{\nub\}$ with $(\vb,\wb)\in\Hc$. Then there exists $\ub\in\G$ such that $\ub=\ub^+-\ub^-\sqsubseteq \vb-\wb$. This implies $(\ub^+,\ub^-)\sqsubseteq (\vb,\wb)$ since $\ub^+$ and $\ub^-$ have disjoint supports. The fact that $(\vb,\wb)\in\Hc$ forces $(\ub^+,\ub^-)= (\vb,\wb)$. Hence, $\vb-\wb=\ub\in\G.$
		
		(ii) Evidently, $(\eb_\ib,\eb_\ib)\in\Hc$ for all $\ib\in I$. Moreover, arguing as in the first part of the proof of (i), we see that $(\ub^+,\ub^-)\in\Hc$ for all $\ub\in\G$.
		
		Conversely, let $(\vb,\wb)\in\Hc$. If $\vb=\wb$, then $(\eb_\ib,\eb_\ib)\sqsubseteq (\vb,\wb)$ for some $\ib\in I$, which implies $(\vb,\wb)=(\eb_\ib,\eb_\ib)$. Otherwise, if $\vb\ne\wb$, then arguing as in the second part of the proof of (i), we get $(\vb,\wb)=(\ub^+,\ub^-)$ for some $\ub\in\G$.
	\end{proof}
	
	When $I=\NN^d\times[c]$, one can identify $2I$ with $\NN^d\times[2c]$. In this case, it is apparent that the maps $\psi$ and $\varphi$ are compatible with the action of the group $\Sym$, i.e.
	\[
	\psi(\sigma(\ub))=\sigma(\psi(\ub)),\quad
	\varphi(\sigma(\ub))=\sigma(\varphi(\ub))
	\quad\text{for all }\
	\ub\in \ZZ^{(2I)}, \sigma\in\Sym.
	\]
	Thus, in particular, $\varphi^{-1}(L)$ is a $\Sym$-invariant lattice in $\ZZ^{(2I)}$ for any $\Sym$-invariant lattice $L\subseteq\ZZ^{(I)}$. 
	
	\begin{corollary}
		\label{cor_Hilbert_Graver}
		Suppose $I=\NN^d\times[c]$, where $c,d\in\NN$. Let $L\subseteq\ZZ^{(I)}$ be a $\Sym$-invariant lattice and let $M=\varphi^{-1}(L)\cap \ZZ^{(2I)}_{\ge0}$. 
		Then the following statements are equivalent:
		\begin{enumerate}
			\item 
			$M$ has a finite equivariant Hilbert basis;
			\item
			$L$ has a finite equivariant Graver basis.
		\end{enumerate} 
	\end{corollary}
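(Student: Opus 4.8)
The plan is to transport everything through the map $\varphi$ by means of \Cref{prop_Hilbert_Graver}, exploiting that $\varphi$ and $\psi$ commute with the $\Sym$-action, so that $\varphi$ carries $\Sym$-orbits to $\Sym$-orbits and, for $\ub\in\ZZ^{(I)}$ and $\sigma\in\Sym$, one has $\sigma(\ub)^{+}=\sigma(\ub^{+})$ and $\sigma(\ub)^{-}=\sigma(\ub^{-})$ (the action merely permutes the coordinates of $\ZZ^{(I)}$). Recall also that the Hilbert basis $\Hc$ of $M$ (by \Cref{lem_Hilbert_monoid}(i)) and the Graver basis $\G$ of $L$ (by \Cref{def:bases}(v)) are uniquely determined and $\Sym$-invariant; hence a finite subset $\Hc_0\subseteq M$ is an equivariant Hilbert basis of $M$ exactly when $\Sym(\Hc_0)=\Hc$, and similarly a finite subset $\G_0\subseteq L$ is an equivariant Graver basis of $L$ exactly when $\Sym(\G_0)=\G$.

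For (i) $\Rightarrow$ (ii), starting from a finite equivariant Hilbert basis $\Hc_0$ of $M$, I would set $\G_0=\varphi(\Hc_0)\setminus\{\nub\}$, which is finite. Using $\varphi\circ\sigma=\sigma\circ\varphi$ together with the bijectivity of each $\sigma$ (so that $\varphi(x)=\nub$ if and only if $\varphi(\sigma(x))=\nub$), one computes
\[
\Sym(\G_0)=\varphi(\Sym(\Hc_0))\setminus\{\nub\}=\varphi(\Hc)\setminus\{\nub\}=\G,
\]
where the last equality is \Cref{prop_Hilbert_Graver}(i). Hence $\G_0$ is a finite equivariant Graver basis of $L$.

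For (ii) $\Rightarrow$ (i), starting from a finite equivariant Graver basis $\G_0$ of $L$, I would invoke \Cref{prop_Hilbert_Graver}(ii), which writes $\Hc$ as the union of $\Hc'=\{(\ub^{+},\ub^{-})\mid\ub\in\G\}$ and $\Hc''=\{(\eb_\ib,\eb_\ib)\mid\ib\in I\}$. For $\Hc'$, the finite set $\Hc_0'=\{(\ub^{+},\ub^{-})\mid\ub\in\G_0\}$ satisfies $\Sym(\Hc_0')=\{(\vb^{+},\vb^{-})\mid\vb\in\Sym(\G_0)\}=\{(\vb^{+},\vb^{-})\mid\vb\in\G\}=\Hc'$, using the compatibility of $\ub\mapsto\ub^{\pm}$ with $\Sym$. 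For $\Hc''$, I would use that when $I=\NN^d\times[c]$ the standard basis $\{\eb_\ib\}_{\ib\in I}$ of $\ZZ^{(I)}$ is the union of finitely many $\Sym$-orbits (the action of $\Sym$ on $\NN^d$ has finitely many orbits, one for each set partition of $[d]$); choosing a finite set $S\subseteq I$ of orbit representatives, the finite set $\Hc_0''=\{(\eb_\ib,\eb_\ib)\mid\ib\in S\}$ satisfies $\Sym(\Hc_0'')=\Hc''$. Then $\Hc_0=\Hc_0'\cup\Hc_0''$ is a finite equivariant Hilbert basis of $M$.

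The argument is essentially bookkeeping once \Cref{prop_Hilbert_Graver} and the $\Sym$-equivariance of $\varphi$ and $\psi$ are in hand; the only point requiring a separate remark is the treatment of the diagonal generators $(\eb_\ib,\eb_\ib)$ in the direction (ii) $\Rightarrow$ (i), where one must observe that $\ZZ^{(I)}$ has only finitely many $\Sym$-orbits of standard basis vectors, equivalently that $\ZZ_{\ge0}^{(2I)}$ itself admits a finite equivariant Hilbert basis.
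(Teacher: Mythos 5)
Your proposal is correct and follows essentially the same route as the paper: for (i) $\Rightarrow$ (ii) push the equivariant Hilbert basis through $\varphi$ using its $\Sym$-equivariance and Proposition~\ref{prop_Hilbert_Graver}(i), and for (ii) $\Rightarrow$ (i) take $\{(\ub^+,\ub^-)\mid\ub\in\G_0\}$ together with finitely many diagonal elements $(\eb_\ib,\eb_\ib)$ covering all $\Sym$-orbits of basis vectors. The paper takes $\ib\in[d]^d\times[c]$ for that finite set of diagonal generators without comment; your remark that there is one orbit per set partition of $[d]$ (times $c$) makes the same point a touch more explicit, but the argument is identical in substance.
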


	\begin{proof}
		(i)$\Rightarrow$(ii): Suppose $\Hc$ is an equivariant Hilbert basis of $M$. Since $\varphi$ is compatible with the $\Sym$-action, we have
		\[
		\Sym(\varphi(\Hc)\setminus\{\nub\})
		=
		\varphi(\Sym(\Hc))\setminus\{\nub\}.
		\]
		By \Cref{prop_Hilbert_Graver}, $\varphi(\Sym(\Hc))\setminus\{\nub\}$ is the Graver basis of $L$. Hence, $\varphi(\Hc)\setminus\{\nub\}$ is an equivariant Graver basis of $L$. If $\Hc$ is finite, then of course $\varphi(\Hc)\setminus\{\nub\}$ is also finite.
		
		(ii)$\Rightarrow$(i): Let $\G$ be an equivariant Graver basis of $L$. Denote
		\[
		\Hc=\{(\ub^+,\ub^-)\mid \ub=\ub^+-\ub^-\in \G\}
		\cup\{(\eb_\ib,\eb_\ib)\mid \ib\in [d]^d\times[c]\}.
		\]
		Then it is easy to check that
		\[
		\Sym(\Hc)=\{(\ub^+,\ub^-)\mid \ub=\ub^+-\ub^-\in \Sym(\G)\}
		\cup\{(\eb_\ib,\eb_\ib)\mid \ib\in I\}.
		\]
		Hence, $\Hc$ is an equivariant Hilbert basis of $M$ by \Cref{prop_Hilbert_Graver}. Clearly, $\Hc$ is finite if $\G$ is.
	\end{proof}
	
	In view of the previous corollary, \Cref{Finiteness-Graver} will follow from the next lemma that is a generalization of  \Cref{lem_lattice_monoid}.
	
	\begin{lemma}
		\label{lattice-monoid-ext}
		Let $I=\NN\times[c]$, where $c\in\NN$. Then for any $\Sym$-invariant lattice $L\subseteq\ZZ^{(I)}$, the monoid $M=L\cap\ZZ^{(I)}_{\ge0}$ has a finite equivariant Hilbert basis.
	\end{lemma}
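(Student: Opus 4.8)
The plan is to argue directly, showing that the unique Hilbert basis $\Hc$ of $M$ is a finite union of $\Sym$-orbits. By \Cref{lem_Hilbert_monoid}, $\Hc$ is exactly the set of $\sqsubseteq$-minimal elements of $M\setminus\{\nub\}$, and it is $\Sym$-invariant because $\Sym$ maps $M$ to itself and preserves $\sqsubseteq$. Once we know that $\Hc$ is the union of finitely many orbits, choosing one representative from each orbit yields a finite subset $\Hc_0$ with $\Sym(\Hc_0)=\Hc$, i.e.\ a finite equivariant Hilbert basis. (This route bypasses \Cref{stabilization-monoid}; alternatively one could feed the same combinatorial input into that theorem by bounding the support sizes of the local Hilbert bases.)

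The key device is to encode the elements of $\ZZ^{(I)}_{\ge0}$, where $I=\NN\times[c]$, by finite multisets, so that Higman's lemma becomes applicable. For $\ub=(u_{(i,j)})\in\ZZ^{(I)}_{\ge0}$, writing $\ub^{(i)}=(u_{(i,1)},\dots,u_{(i,c)})\in\ZZ_{\ge0}^{c}$ for its $i$-th column, I associate the column multiset $\mu(\ub)=\{\!\{\,\ub^{(i)}\mid i\in\pi(\supp\ub)\,\}\!\}$, a finite multiset over $\ZZ_{\ge0}^{c}\setminus\{\nub\}$; two elements of $\ZZ^{(I)}_{\ge0}$ lie in the same $\Sym$-orbit precisely when they have the same column multiset. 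On finite multisets over $\ZZ_{\ge0}^{c}\setminus\{\nub\}$ I use the \emph{embedding order}: $A$ embeds into $B$ if there is an injection $f\colon A\to B$ with $a\le f(a)$ coordinatewise for all $a\in A$. The first point to check is that $\mu(\ub)$ embeds into $\mu(\vb)$ if and only if $\sigma(\ub)\sqsubseteq\vb$ for some $\sigma\in\Sym$: an embedding amounts to an injection $g\colon\pi(\supp\ub)\to\pi(\supp\vb)$ with $\ub^{(i)}\le\vb^{(g(i))}$ for all $i$, and such a $g$ extends to a permutation $\sigma$ of $\NN$ with $\sigma(\ub)\sqsubseteq\vb$; conversely the restriction of any such $\sigma$ to $\pi(\supp\ub)$ is an embedding, using that $\ub^{(i)}\ne\nub$ forces $\vb^{(\sigma(i))}\ne\nub$.

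With this in hand, the next step is the observation that the column multisets $\{\mu(\ub)\mid\ub\in\Hc\}$ form an \emph{antichain} for the embedding order, and that distinct orbits in $\Hc$ yield distinct multisets. Indeed, if $\ub,\vb\in\Hc$ and $\mu(\ub)$ embeds into $\mu(\vb)$, choose $\sigma$ with $\sigma(\ub)\sqsubseteq\vb$; then $\sigma(\ub)$ is still a $\sqsubseteq$-minimal element of $M\setminus\{\nub\}$, so $\sqsubseteq$-minimality of $\vb$ forces $\sigma(\ub)=\vb$, and hence $\mu(\ub)=\mu(\vb)$ with $\ub,\vb$ in a single orbit. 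Finally I invoke the combinatorial input: $(\ZZ_{\ge0}^{c},\le)$ is a well-quasi-order (Dickson's lemma), and by Higman's lemma \cite{Hig} the finite words — hence the finite multisets — over a well-quasi-order are themselves well-quasi-ordered under embedding. Therefore there is no infinite antichain of such multisets; if $\Hc$ met infinitely many $\Sym$-orbits, picking one $\ub_k$ from each would produce an infinite antichain $\mu(\ub_1),\mu(\ub_2),\dots$, a contradiction. So $\Hc$ is a finite union of $\Sym$-orbits, which completes the argument.

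I expect the main obstacle to be the bookkeeping in the second paragraph — pinning down the exact correspondence between the multiset embedding order and ``$\sqsubseteq$ up to the $\Sym$-action'' — and, relatedly, isolating the precise well-quasi-order facts needed (Dickson's lemma for $\ZZ_{\ge0}^{c}$, and the multiset form of Higman's lemma). It should be stressed that this argument genuinely uses $d=1$: only then is an element of $\ZZ^{(I)}_{\ge0}$ simply a finite multiset of columns, so that Higman's lemma applies directly; for $d\ge 2$ the $\Sym$-orbit structure of $\ZZ^{(I)}_{\ge0}$ is substantially more complicated.
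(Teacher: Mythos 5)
Your argument is correct and rests on the same key device as the paper's proof: recognizing that an element of $\ZZ_{\ge 0}^{(\NN\times[c])}$ is essentially a finite word (or multiset) over the well-quasi-ordered set $(\ZZ_{\ge 0}^c,\sqsubseteq)$ and applying Higman's lemma. The difference is one of packaging rather than substance. The paper works with the monoid $\Inc$ of strictly increasing functions, defines the quasi-order $\ub\sqsubseteq_{\Inc}\vb$ ``some $\pi\in\Inc$ has $\pi(\ub)\sqsubseteq\vb$'', identifies this with Higman's subsequence order on words over $\ZZ_{\ge0}^c$, and then invokes the final-segment characterization of well-partial-orders: the final segment generated by $M\setminus\{\nub\}$ is finitely generated, and its generators (after passing from $\Inc$ to $\Sym$) give the equivariant Hilbert basis. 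You instead work with the full $\Sym$-orbit structure by passing to column multisets, show that multiset embedding is exactly ``$\sqsubseteq$ up to $\Sym$'', and then use the no-infinite-antichain characterization of well-quasi-orders to bound the number of orbits in the set of $\sqsubseteq$-minimal elements. Your multiset order is the quotient of the paper's $\Inc$-word order under reordering; both are well-quasi-orders by Higman, and the final-segment and antichain formulations are equivalent. Your version has the minor advantage of working directly with $\Sym$-orbits and yielding the statement ``the Hilbert basis is a finite union of orbits'' without a detour through $\Inc$; the paper's version is slightly cleaner to state because $\Inc$-embedding literally is Higman's subsequence order. Both prove the lemma.

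One small point worth being explicit about: when you claim the multisets over a wqo are wqo under embedding, this follows from Higman's word version via the observation that a subsequence embedding of any linearization induces a multiset embedding, so an infinite bad sequence of multisets would lift to an infinite bad sequence of words. You gesture at this (``the finite words --- hence the finite multisets'') but it deserves a sentence in a written-up proof.
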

	
	We do not know whether the ``elementary" proof of \Cref{lem_lattice_monoid} can be adapted to prove this result. Our proof here uses a different argument that requires Higman's lemma. For the proof,
	let us first recall a few notions. Let $\le$ be a partial order on a set $S$. Then $\le$ is called a \emph{well-partial-order} if any infinite sequence $s_1,s_2,\dots$ in $S$ is \emph{good}, i.e. $s_i\le s_j$ for some indices $i<j$. For example, $\sqsubseteq$ is a well-partial-order on $\ZZ^n$ by Gordan--Dickson lemma (see \cite[Lemma 2.5.6]{DHK}). A \emph{final segment} is a subset $\F\subseteq S$ with the property that if $s\in \F$ and $s\le t\in S$, then $t\in \F$. The final segment \emph{generated} by a subset $T\subseteq S$ is defined as
	\[
	\F(T)\defas\{s\in S\mid s\le t \text{ for some } t\in T\}.
	\]
	A classical characterization of well-partial-orders (see, e.g. \cite[Theorem 2.1]{Hig}) is that every final segment is finitely generated.
	Let $S^*=\bigcup_{n\in\NN}S^n$ denote the set of finite sequences of elements of $S$. Define the \emph{Higman order} $\le_H$ on $S^*$ as follows: $(s_1,\dots,s_p)\le_H (s_1',\dots,s_q')$ if there exists a strictly increasing map $\pi:[p]\to [q]$ such that $s_i\le s_{\pi(i)}'$ for all $i\in[p]$. Then Higman's lemma \cite[Theorem 4.3]{Hig} states that $\le_H$ is a well-partial-order on $S^*$.
	
	In order to apply Higman's lemma to the situation of \Cref{lattice-monoid-ext}, it is convenient to consider the monoid of strictly increasing functions
	\[
	\Inc = \{ \pi \colon \NN \to \NN \mid  \pi(i)<\pi(i+1) 
	\text{ for all } i\in \NN\}.
	\]
	This monoid acts on $\ZZ^{(I)}$ when $I=\NN^d\times[c]$ analogously to $\Sym$. For simplicity, we only describe the action in the case $I=\NN\times[c]$, which is enough for the proof of \Cref{lattice-monoid-ext}. Let $\{\eb_{i,j}\}_{(i,j)\in\NN\times[c]}$ be the standard basis of $\ZZ^{(I)}$. Then the action of $\Inc$ on $\ZZ^{(I)}$ is determined by
	\[
	\pi(\eb_{i,j})= \eb_{\pi(i),j}
	\ \text{ for any } \pi\in\Inc
	\text{ and } (i,j)\in\NN\times[c].
	\]
	This action is closely related to that of $\Sym$; see \cite{KLR} for details in the case $c=1$. Note that for any $\pi\in\Inc$ and $\ub\in\ZZ^{(I)}$, there exists $\sigma\in\Sym$ such that $\pi(\ub)=\sigma(\ub).$ Indeed, $\supp(\ub)$ is contained in $[n]\times[c]$ for some $n\in\NN$. Since the restriction $\pi\restr{[n]}$ is injective, there exists $\sigma\in\Sym$ such that $\pi\restr{[n]}=\sigma\restr{[n]}$, which yields $\pi(\ub)=\sigma(\ub).$
	
	We are now ready to prove \Cref{lattice-monoid-ext}.

	\begin{proof}[Proof of \Cref{lattice-monoid-ext}]
		Define a partial order $\sqsubseteq_{\Inc}$ on $\ZZ_{\ge0}^{(I)}$ as follows: $\ub\sqsubseteq_{\Inc}\vb$ if there exists $\pi\in \Inc$ such that $\pi(\ub)\sqsubseteq\vb.$ We show that $\sqsubseteq_{\Inc}$ is a well-partial-order. Indeed, recall that $\sqsubseteq$ is a well-partial-order on $\ZZ_{\ge0}^{c}$ by Gordan--Dickson lemma. So by Higman's lemma, the Higman order $\sqsubseteq_H$ is a well-partial-order on $(\ZZ_{\ge0}^{c})^*$.  Via the natural bijection
		\[
		\ZZ_{\ge0}^{(I)}=\ZZ_{\ge0}^{(\NN\times [c])}
		=\bigcup_{n\in\NN}\ZZ_{\ge0}^{([n]\times [c])}
		\cong \bigcup_{n\in\NN}(\ZZ_{\ge0}^{c})^n
		=(\ZZ_{\ge0}^{c})^*,
		\]
		we may identify the partially ordered set $(\ZZ_{\ge0}^{(I)},\sqsubseteq_{\Inc})$ with $((\ZZ_{\ge0}^{c})^*,\sqsubseteq_H)$. Therefore, $\sqsubseteq_{\Inc}$ is a well-partial-order on $\ZZ_{\ge0}^{(I)}$.
		
		Now consider the final segment generated by $M\setminus\{\nub\}$ with respect to $\sqsubseteq_{\Inc}$:
		\[
		\F=\F(M\setminus\{\nub\})=\{\vb\in \ZZ_{\ge0}^{(I)}
		\mid \ub\sqsubseteq_{\Inc}\vb \text{ for some } \ub\in M\setminus\{\nub\}\}.
		\]
		Since $\sqsubseteq_{\Inc}$ is a well-partial-order, $\F$ is finitely generated. That is, $\F=\F(\Hc)$ for some finite subset $\Hc\subseteq M\setminus\{\nub\}$.
		We prove that $\Hc$ contains an equivariant Hilbert basis of $M$. By \Cref{lem_Hilbert_monoid}, it suffices to show that $\Sym(\Hc)$ contains all $\sqsubseteq$-minimal elements of $M\setminus\{\nub\}$. Indeed, for any $\vb\in M\setminus\{\nub\}$, there exist $\ub\in\Hc$ and $\pi\in\Inc$ such that $\pi(\ub)\sqsubseteq \vb$. Since $\pi(\ub)=\sigma(\ub)$ for some $\sigma\in\Sym$, the desired assertion follows.
	\end{proof}  
	
	Let us conclude this section with the proof of \Cref{Finiteness-Graver}.
	
	\begin{proof}[Proof of \Cref{Finiteness-Graver}]
		The result follows easily from \Cref{cor_Hilbert_Graver,lattice-monoid-ext}.
	\end{proof}
	
	\section{Finiteness of equivariant generating sets and Markov bases}
	\label{sec-Markov}
	Throughout this section, let $I=\NN^d\times[c]$ with $c,d\in\NN$. We study here \Cref{main-problem} for equivariant generating sets and Markov bases of $\Sym$-invariant lattices in $\ZZ^{(I)}$. Based on a result of Hillar and Mart\'{i}n del Campo \cite[Theorem 19]{HM}, we show that every $\Sym$-invariant lattice in $\ZZ^{(I)}$ has a finite equivariant generating set. Concerning finiteness of equivariant Markov bases, we provide a global version of the independent set theorem of Hillar and Sullivant \cite[Theorem 4.7]{HS12}.
	
	\subsection{Equivariant Noetherianity} 
	
	We say that $\ZZ^{(I)}$ is an \emph{equivariantly Noetherian $\ZZ$-module} if every $\Sym$-invariant lattice in $\ZZ^{(I)}$ has a finite equivariant generating set. Our goal here is to prove the following result and discuss its consequences.
	
	\begin{theorem}
		\label{Finiteness-generating}
		$\ZZ^{(I)}$ is an equivariantly Noetherian $\ZZ$-module.
	\end{theorem}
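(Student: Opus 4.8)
The plan is to deduce \Cref{Finiteness-generating} from the theorem of Hillar and Mart\'{i}n del Campo by two reductions of ``direct sum'' type: first peeling off the bounded factor $[c]$, and then, on the unbounded part $\NN^d$, peeling off the $\Sym$-orbits. The one structural lemma I would isolate up front is that a finite direct sum of equivariantly Noetherian $\ZZ\Sym$-modules is again equivariantly Noetherian. To see this, let $N\subseteq A\oplus B$ be $\Sym$-invariant with $A,B$ equivariantly Noetherian. The image $\overline N$ of $N$ under the projection onto $B$ is a $\Sym$-invariant submodule of $B$, hence has a finite equivariant generating set, which lifts to finitely many elements of $N$; and $N\cap A$ is a $\Sym$-invariant submodule of $A$, hence has a finite equivariant generating set; the union of the two finite sets is a finite equivariant generating set of $N$. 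One then iterates over the number of summands.

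Applying this is routine once the pieces are arranged. As a $\ZZ\Sym$-module one has $\ZZ^{(I)}=\ZZ^{(\NN^d\times[c])}\cong\bigoplus_{j=1}^{c}\ZZ^{(\NN^d)}$, with $\Sym$ acting diagonally on each summand and fixing the bounded index, so it suffices to prove that $\ZZ^{(\NN^d)}$ is equivariantly Noetherian, which is what \cite[Theorem~19]{HM} provides in its lattice-ideal formulation. For a more self-contained route, or in case that input is only at hand for a single unbounded index, the same direct-sum lemma reduces further: $\NN^d$ splits into finitely many $\Sym$-orbits, one for each set partition of $[d]$, and the orbit attached to a partition with $k$ blocks is $\Sym$-equivariantly isomorphic to the set $N_k$ of injective $k$-tuples in $\NN$; hence $\ZZ^{(\NN^d)}=\bigoplus_{k}\ZZ^{(N_k)}$ is a finite direct sum and it remains to treat each $\ZZ^{(N_k)}$. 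For $k=1$ this is $\ZZ^{(\NN)}$ with the permutation action, which is elementary. For $k\ge2$ I would pass from $\Sym$ to the monoid $\Inc$ exactly as in the proof of \Cref{lattice-monoid-ext} --- any $\pi\in\Inc$ agrees on a prescribed finite set with some $\sigma\in\Sym$, so $\Sym$-invariance and finite generation up to the group match their $\Inc$-counterparts --- observe that $N_k$ falls into finitely many $\Inc$-orbits indexed by the relative order of the coordinates, and invoke Noetherianity of the resulting single-degree free $\ZZ\Inc$-module, which ultimately rests on Higman's lemma.

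The main obstacle is precisely this last point: the $k\ge2$, equivalently $d\ge2$, ``off-diagonal'' strata. This is where the lattice (module) viewpoint is indispensable rather than merely convenient, since the polynomial ring $K[x_{\ib}\mid\ib\in\NN^d]$ is \emph{not} equivariantly Noetherian for $d\ge2$, so one cannot route the argument through Hilbert's basis theorem; the free $\ZZ$-module $\ZZ^{(\NN^d)}$, by contrast, stays Noetherian up to symmetry. I would also remark that by \Cref{stabilization} the theorem is equivalent to the assertion that every $\Sym$-invariant chain of lattices in $\ZZ^{(I)}$ stabilizes, but proving this comes down to the same reductions.
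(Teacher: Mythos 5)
Your main argument is essentially the paper's: you isolate the same direct-sum lemma (the paper's \Cref{direct-sum}, whose ``routine'' verification you correctly spell out via the projection and the intersection $N\cap A$), iterate it to strip the bounded factor $[c]$, and then invoke \cite[Theorem 19]{HM} for the case $I=\NN^d$. The extra ``self-contained route'' that further decomposes $\NN^d$ by $\Sym$-orbits is not in the paper, which simply cites [HM]. Two small cautions about that sketch: the decomposition $\ZZ^{(\NN^d)}\cong\bigoplus_k\ZZ^{(N_k)}$ should carry multiplicities equal to the Stirling numbers $S(d,k)$, since distinct set partitions of $[d]$ with the same number of blocks contribute distinct (isomorphic) orbit summands; and the final step---Noetherianity of the $\ZZ\Inc$-module on strictly increasing $k$-tuples---is itself the nontrivial Higman-based content of [HM, Theorem 19], so this route is less independent of [HM] than advertised, though morally correct. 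Your remark contrasting the module case with the failure of equivariant Noetherianity of $K[x_{\ib}\mid\ib\in\NN^d]$ for $d\ge2$ is accurate and a nice point, though not needed for the proof.
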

	
	When $c=1$, this result follows from \cite[Theorem 19]{HM}. It is therefore enough to prove the following lemma.
	
	\begin{lemma}
	\label{direct-sum}
	Let $I^{(1)}=\NN^d\times[c_1]$ and $I^{(2)}=\NN^d\times[c_2]$, where $c_1,c_2\in\ZZ_{\ge0}$ with $c_1+c_2=c$. Then  $\ZZ^{(I)}$ is an equivariantly Noetherian $\ZZ$-module if $\ZZ^{(I^{(1)})}$ and $\ZZ^{(I^{(2)})}$ are.
	\end{lemma}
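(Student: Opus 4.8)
The plan is to present $\ZZ^{(I)}$ as an extension of $\ZZ^{(I^{(2)})}$ by $\ZZ^{(I^{(1)})}$ in the category of $\ZZ[\Sym]$-modules and then to run the classical argument that an extension of two equivariantly Noetherian modules is again equivariantly Noetherian. Concretely, I would identify $[c]$ with the disjoint union $[c_1]\sqcup\{c_1+1,\dots,c\}$, the second block being a copy of $[c_2]$. This yields a decomposition $\ZZ^{(I)}=\ZZ^{(I^{(1)})}\oplus\ZZ^{(I^{(2)})}$ in which $\ZZ^{(I^{(1)})}$ is the span of the basis vectors $\eb_{\ib,j}$ with $\ib\in\NN^d$, $j\in[c_1]$, and I would let $\pi\colon\ZZ^{(I)}\to\ZZ^{(I^{(2)})}$ be the projection onto the complementary block. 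The first thing to record is that, since the $\Sym$-action only permutes the unbounded index and fixes the bounded one, both the inclusion $\ZZ^{(I^{(1)})}\hookrightarrow\ZZ^{(I)}$ and the projection $\pi$ are $\Sym$-equivariant, and $\kernel\pi=\ZZ^{(I^{(1)})}$.

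Next, I would take an arbitrary $\Sym$-invariant lattice $L\subseteq\ZZ^{(I)}$ and form its \emph{image} $\pi(L)\subseteq\ZZ^{(I^{(2)})}$ and its \emph{intersection} $L\cap\ZZ^{(I^{(1)})}=\kernel(\pi\restr{L})\subseteq\ZZ^{(I^{(1)})}$. Because $\pi$ is equivariant and $L$ is $\Sym$-invariant, both of these are $\Sym$-invariant lattices in spaces that are equivariantly Noetherian by hypothesis, so each admits a finite equivariant generating set, say $\pi(L)=\ZZ\Sym(\{\wb_1,\dots,\wb_r\})$ and $L\cap\ZZ^{(I^{(1)})}=\ZZ\Sym(\{\ub_1,\dots,\ub_s\})$. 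For each $k\in[r]$ I would choose a preimage $\vb_k\in L$ with $\pi(\vb_k)=\wb_k$, and claim that the finite set $\B=\{\ub_1,\dots,\ub_s,\vb_1,\dots,\vb_r\}$ is an equivariant generating set of $L$.

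To verify the claim one takes $\xb\in L$, writes $\pi(\xb)=\sum_i z_i\sigma_i(\wb_{k_i})$ with $z_i\in\ZZ$ and $\sigma_i\in\Sym$, and sets $\xb'=\sum_i z_i\sigma_i(\vb_{k_i})\in L$; equivariance of $\pi$ gives $\pi(\xb')=\pi(\xb)$, hence $\xb-\xb'\in\kernel\pi\cap L=L\cap\ZZ^{(I^{(1)})}$, which is $\ZZ\Sym(\{\ub_1,\dots,\ub_s\})$. Adding the two expressions exhibits $\xb$ as a $\ZZ$-linear combination of $\Sym$-translates of elements of $\B$, and since $L$ was arbitrary this shows $\ZZ^{(I)}$ is equivariantly Noetherian. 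I do not expect a genuine obstacle here: the only points needing care are that the splitting $[c]=[c_1]\sqcup[c_2]$ is preserved by the $\Sym$-action (which is automatic, as $\Sym$ fixes the bounded coordinate) and that one tracks the equivariance of $\pi$ when lifting a relation from $\pi(L)$ back into $L$.
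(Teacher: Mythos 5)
Your proof is correct and follows essentially the same route as the paper's: the authors set up the short exact sequence $0\to\ZZ^{(I^{(1)})}\xrightarrow{\iota}\ZZ^{(I)}\xrightarrow{\eta}\ZZ^{(I^{(2)})}\to 0$ of $\Sym$-equivariant maps and then state that it is "routine" to build a finite equivariant generating set of $L$ from ones for $\eta(L)$ and $\iota^{-1}(L)$, which are exactly your $\pi(L)$ and $L\cap\ZZ^{(I^{(1)})}$. You have simply carried out that routine lifting argument in detail, which is a welcome addition but not a different approach.
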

	
	\begin{proof}
	The natural isomorphism $\ZZ^{(I)}\cong\ZZ^{(I^{(1)})}\oplus \ZZ^{(I^{(2)})}$ yields the short exact sequence
	\[
	\xymatrix{
		0 \ar[r] & \ZZ^{(I^{(1)})} \ar[r]^-{\iota} & \ZZ^{(I)}  \ar[r]^-{\eta} & \ZZ^{(I^{(2)})}\ar[r] & 0,
	}
	\]
	where $\iota$ and $\eta$ are compatible with the $\Sym$-action. For any $\Sym$-invariant lattice $L\subseteq\ZZ^{(I)}$, it is then routine to construct a finite equivariant generating set of $L$ from finite equivariant generating sets of the lattices $\eta(L)\subseteq\ZZ^{(I^{(2)})}$ and $\iota^{-1}(L)\subseteq\ZZ^{(I^{(1)})}$.
	\end{proof}
	
	Let us now derive a few consequences of \Cref{Finiteness-generating}. First, combining \Cref{stabilization,Finiteness-generating} yields the next corollary, which can be seen as a local version of \Cref{Finiteness-generating}. 
	
	\begin{corollary}
		\label{local-Noetherianity}
		For any $\Sym$-invariant chain of lattices $\L=(L_n)_{n\ge1}$ in $\ZZ^{(I)}$, the statements (i), (ii), (iii) in \Cref{stabilization} always hold true.
	\end{corollary}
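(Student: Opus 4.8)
The plan is to reduce everything to \Cref{stabilization} and \Cref{Finiteness-generating}, so the argument is essentially a one-line deduction once those two results are in hand. Given a $\Sym$-invariant chain of lattices $\L=(L_n)_{n\ge1}$ in $\ZZ^{(I)}$, I would first pass to its limit $L=\bigcup_{n\ge1}L_n$, which is a $\Sym$-invariant lattice in $\ZZ^{(I)}$ by the discussion preceding \Cref{stabilization}. Then \Cref{Finiteness-generating} applies directly to $L$ and yields a finite equivariant generating set; this is precisely condition (iv) in the list of equivalent statements of \Cref{stabilization}.

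Since \Cref{stabilization} already establishes that conditions (i)--(iv) are equivalent for an arbitrary $\Sym$-invariant chain with limit $L$, the validity of (iv) immediately forces (i), (ii) and (iii). Hence no further work is required, and I would present the proof in just two or three lines. The only real subtlety worth a word of care is the bookkeeping that the chain $\L$ we start with and the saturated chain of its limit $L$ have the same limit, so that \Cref{stabilization} is being invoked for the correct chain; but this is immediate from the definitions. There is no genuine obstacle: the substantive content lives in \Cref{Finiteness-generating} (which itself reduces via \Cref{direct-sum} to the case $c=1$ handled in \cite[Theorem 19]{HM}) and in the equivalences of \Cref{stabilization} (proved using \Cref{reduce_width} and \Cref{pull_permutation}), and \Cref{local-Noetherianity} is simply their combination.
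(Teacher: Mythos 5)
Your proof is correct and matches the paper's own reasoning exactly: the paper likewise states that \Cref{local-Noetherianity} is obtained by "combining \Cref{stabilization,Finiteness-generating}," i.e.\ the limit lattice $L=\bigcup_n L_n$ is $\Sym$-invariant, so \Cref{Finiteness-generating} gives condition (iv) of \Cref{stabilization}, and the equivalences there yield (i)--(iii). Your remark about the chain vs.\ its saturated chain is not even an issue, since \Cref{stabilization} is stated for an arbitrary $\Sym$-invariant chain, not just the saturated one.
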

	
    The following algebraic version of \Cref{Finiteness-generating} is obtained by using \Cref{bases_algebraic}.
	
	\begin{corollary}
		\label{algebraic-Noetherianity}
		For any $\Sym$-invariant lattice $L\subseteq\ZZ^{(I)}$, the Laurent ideal $\Ik_L^\pm$ is equivariantly finitely generated.
	\end{corollary}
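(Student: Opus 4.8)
The plan is to read off the statement from \Cref{Finiteness-generating} via the dictionary between lattice bases and lattice ideals in \Cref{bases_algebraic}. First I would apply \Cref{Finiteness-generating} to obtain a \emph{finite} subset $\B\subseteq L$ which is an equivariant generating set of $L$; by \Cref{def-equi-bases} this means that $\Sym(\B)$ is a generating set of the abelian group $L$. Note that $\Sym(\B)\subseteq L$ since $L$ is $\Sym$-invariant and $\B\subseteq L$. By \Cref{bases_algebraic}(i) applied to the generating set $\Sym(\B)$ we then get $\Ik_L^\pm=\Ik_{\Sym(\B)}^\pm=\langle\bk(\Sym(\B))\rangle R^\pm$.

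The one point that needs a line of verification is that the $\Sym$-action on $R^\pm$ from \Cref{rm-ideal-stabilization} is compatible with the assignment $\bb\mapsto\xb^{\bb^+}-\xb^{\bb^-}$. Since $\sigma\in\Sym$ merely permutes the variables, it sends the monomial $\xb^{\bb^+}$ to $\xb^{\sigma(\bb^+)}$, and permuting coordinates does not change which coordinates are positive or negative, so $\sigma(\bb^+)=(\sigma\bb)^+$ and likewise $\sigma(\bb^-)=(\sigma\bb)^-$. Hence $\sigma\bigl(\xb^{\bb^+}-\xb^{\bb^-}\bigr)=\xb^{(\sigma\bb)^+}-\xb^{(\sigma\bb)^-}$, which gives $\bk(\Sym(\B))=\Sym(\bk(\B))$. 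Combining this with the previous paragraph, $\Ik_L^\pm=\langle\Sym(\bk(\B))\rangle R^\pm$ is generated, as an ideal of $R^\pm$, by the $\Sym$-orbit of the finite set $\bk(\B)$; that is exactly the assertion that $\Ik_L^\pm$ is equivariantly finitely generated.

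There is no genuine obstacle here: the corollary is a formal consequence of \Cref{Finiteness-generating} and \Cref{bases_algebraic}(i), with the compatibility of the action with forming binomials being the only (immediate) thing to check. One may optionally observe that running the same argument with \Cref{bases_algebraic}(ii) in place of (i) shows that equivariant finite generation of $L$ is in fact equivalent to equivariant finite generation of the lattice ideal $\Ik_L\subseteq R$ itself, so the conclusion also holds in the ordinary polynomial ring; the Laurent version stated above is the form needed for \Cref{rm-ideal-stabilization}.
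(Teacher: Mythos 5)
Your main argument is correct and matches the paper's intended proof: apply \Cref{Finiteness-generating} to get a finite equivariant generating set $\B$ of $L$, translate via \Cref{bases_algebraic}(i), and observe that the $\Sym$-action on $R^\pm$ is compatible with the assignment $\bb\mapsto\xb^{\bb^+}-\xb^{\bb^-}$ so that $\bk(\Sym(\B))=\Sym(\bk(\B))$.

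However, your closing ``optional'' remark is wrong. \Cref{bases_algebraic}(ii) dictionaries \emph{Markov bases} of $L$ (not generating sets) to generating sets of the ordinary lattice ideal $\Ik_L\subseteq R$. A finite equivariant \emph{generating set} of $L$, which is what \Cref{Finiteness-generating} actually provides, does not give a finite equivariant Markov basis, and equivariant finite generation of $L$ is \emph{not} equivalent to equivariant finite generation of $\Ik_L$. Indeed, by \Cref{implications} the implication goes strictly one way, and \Cref{no-3-way} exhibits a $\Sym$-invariant lattice in $\ZZ^{(\NN^2\times[c])}$ that, by \Cref{Finiteness-generating}, has a finite equivariant generating set but, as shown there, admits no finite equivariant Markov basis — so its lattice ideal $\Ik_L\subseteq R$ is not equivariantly finitely generated, even though $\Ik_L^\pm$ is. The Laurent ring is essential here, which is precisely why the corollary is phrased for $\Ik_L^\pm$ rather than $\Ik_L$.
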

	
	Now \Cref{local-Noetherianity} together with \Cref{rm-ideal-stabilization} yields the following generalization of \cite[Theorem 3]{HM}.
	\begin{corollary}
		\label{local-algebraic--Noetherianity}
		For any $\Sym$-invariant chain of lattices $\L=(L_n)_{n\ge1}$ in $\ZZ^{(I)}$, the chain of Laurent ideals $(\Ik_{L_n}^\pm)_{n\ge1}$ stabilizes.
	\end{corollary}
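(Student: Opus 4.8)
The plan is to deduce this immediately from the stabilization of the chain $\L$ itself, combined with the algebraic dictionary recorded in \Cref{rm-ideal-stabilization}. First I would observe that the limit $L=\bigcup_{n\ge1}L_n$ is a $\Sym$-invariant lattice in $\ZZ^{(I)}$, as noted right after the definition of a $\Sym$-invariant chain. By \Cref{Finiteness-generating}, this lattice $L$ admits a finite equivariant generating set, which is precisely condition (iv) of \Cref{stabilization}. Invoking the implication (iv) $\Rightarrow$ (i) of that theorem — equivalently, citing \Cref{local-Noetherianity} directly — I conclude that the chain $\L$ stabilizes, i.e. there is $p\in\NN$ with $L_n=\ZZ\Sym(n)(L_m)$ for all $n\ge m\ge p$.

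Next I would pass to the language of Laurent ideals. By part (i) of \Cref{rm-ideal-stabilization}, the chain $\L$ stabilizes if and only if the chain of Laurent ideals $(\Ik^{\pm}_{L_n})_{n\ge1}$ stabilizes (as a $\Sym$-invariant chain of Laurent ideals). Combining this equivalence with the conclusion of the previous step yields that $(\Ik^{\pm}_{L_n})_{n\ge1}$ stabilizes, which is exactly the assertion of the corollary.

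I do not anticipate any genuine obstacle here: the substantive content has already been absorbed into \Cref{Finiteness-generating} (whose proof rests on the equivariant Noetherianity result of Hillar and Mart\'{i}n del Campo together with the d\'evissage in \Cref{direct-sum}) and into the correspondence between stabilization of chains of lattices and stabilization of the associated chains of Laurent ideals in \Cref{rm-ideal-stabilization}. The one point worth verifying carefully is that both of these inputs are formulated for an \emph{arbitrary} $\Sym$-invariant chain of lattices, not merely for the saturated chain of a global lattice, so that no preliminary reduction to the saturated chain of $L$ is required; inspection of the cited statements confirms that this is the case.
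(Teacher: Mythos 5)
Your argument is correct and follows essentially the same route as the paper: it invokes \Cref{local-Noetherianity} (itself a combination of \Cref{Finiteness-generating} with \Cref{stabilization}) to obtain stabilization of the chain $\L$, then translates via \Cref{rm-ideal-stabilization}(i) to stabilization of the chain of Laurent ideals. No issues.
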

	
	It should be noted that \Cref{Finiteness-generating} is no longer true when $d\ge2$ if generating set is replaced with Markov basis.
	
	\begin{example}
	    \label{no-3-way}
	    Consider the \emph{no $3$-way interaction model}, i.e. the hierarchical model defined by $\Delta=\{\{1,2\},\{1,3\},\{2,3\}\}$ and $\rb=(r_1,r_2,r_3)\in\NN^3$. For such a model, the $\Delta$-marginal map
	    \[
	   \mu_{\Delta,\rb}:\ZZ^{([r_1]\times[r_2]\times[r_3])}\to \ZZ^{(([r_1]\times[r_2])}\oplus\ZZ^{(([r_1]\times[r_3])}
	   \oplus \ZZ^{(([r_2]\times[r_3])}
	   \]
	   computes all $2$-way marginals of each $3$-way table in $\ZZ^{([r_1]\times[r_2]\times[r_3])}$. It is known that (see \cite{DS} and also \cite[Example 4.3]{HS12}) when $r_3=c\ge2$ is fixed and $r_1=r_2=n$ vary, any Markov basis of the $\Sym$-invariant lattice $L_n=\ker\mu_{\Delta,\rb}\subseteq\ZZ^{([n]^2\times[c])}$ contains the following element
	   \[
	   \ub=\sum_{i=1}^n(\eb_{i,i,1}-\eb_{i,i,2})
	   +
	   \sum_{i=1}^{n-1}(\eb_{i,i+1,2}-\eb_{i,i+1,1})
	   +(\eb_{n,1,2}-\eb_{n,1,1}),
	   \]
	   where $\eb_{i,j,k}$ denotes a standard basis element of $\ZZ^{(\NN^2\times[c])}$. Obviously, the support size of $\ub$ is unbounded when $n$ tends to $\infty$. So by \cref{stabilization-others}, the lattice $L=\bigcup_{n\ge1}L_n\subseteq\ZZ^{(\NN^2\times[c])}$ has no finite equivariant Markov bases.
	   
	   When $\rb=(r_1,r_2,3)$, where $r_1,r_2$ vary, De Loera and Onn \cite[Theorem 1.2]{DO} even showed that Markov bases of $\ker\mu_{\Delta,\rb}$ can be arbitrarily complicated: for any $\vb\in\NN^k$, there exist $r_1,r_2$ such that any Markov basis of $\ker\mu_{\Delta,\rb}$ contains an elements whose restriction to some $k$ entries is precisely $\vb$.
	\end{example}
	
	\subsection{The independent set theorem}
	
	In view of \Cref{no-3-way}, it is of great interest to know which $\Sym$-invariant lattices in $\ZZ^{(I)}$ have finite equivariant Markov bases when $d\ge2$. We discuss here one of the major results in this direction, the independent set theorem of Hillar and Sullivant \cite[Theorem 4.7]{HS12}, which is a generalization of the finiteness result of Ho\c{s}ten and Sullivant \cite{HoS} mentioned in the previous section. For related results see \cite{Do,DoS,DEKL,KKL}.
	
	Consider the hierarchical model determined by a simplicial complex $\Delta\subseteq 2^{[m]}$ and a vector $\rb=(r_1,\dots,r_m)\in\NN^m$. In the independent set theorem, several entries of $\rb$, indexed by an independent set, can vary, while the others are fixed. Here, a subset $T\subseteq[m]$ is called an \emph{independent set} of $\Delta$ if $|T\cap F_k|\le 1$ for all $k\in [s]$, where $F_1,\dots,F_k$ are the facets of $\Delta$. Fix $r_j$ for $j\in[m]\setminus T$ and let $r_i=n$ for all $i\in T$. Denote $d=|T|$, $c=\prod_{j\in[m]\setminus T} r_j$ and $d_k=|T\cap F_k|$, $c_k=\prod_{j\in F_k\setminus T} r_j$ for $k\in [s]$. Then $\R=\prod_{i=1}^m[r_i]$ and $\R_{F_k}=\prod_{j\in F_k}[r_j]$ can be identified with $[n]^d\times[c]$ and $[n]^{d_k}\times[c_k]$, respectively. The $\Delta$-marginal map now becomes a map
	\[
	\mu_{\Delta,\rb}:\ZZ^{([n]^d\times[c])}\to \bigoplus_{k=1}^s\ZZ^{([n]^{d_k}\times[c_k])}.
	\]
	We will write $\mu_{\Delta,\rb}$ as $\mu_{\Delta,T,n}$ to emphasize its dependence on $T$ and $n$. Consider the action of $\Sym(n)$ on $\ZZ^{([n]^d\times[c])}$ and $\ZZ^{([n]^{d_k}\times[c_k])}$ as before. It is easy to see that $\mu_{\Delta,T,n}$ is compatible with the $\Sym(n)$-action. Thus, the lattice $\ker\mu_{\Delta,T,n}$ is $\Sym(n)$-invariant, and moreover, the chain $\L_{\Delta,T}=(\ker\mu_{\Delta,T,n})_{n\ge1}$ is $\Sym$-invariant.
	
	With the above setup, the independent set theorem of Hillar and Sullivant can be stated as follows.
	
	\begin{theorem}
	    \label{independent-set}
	    Consider the hierarchical model defined by $\Delta\subseteq 2^{[m]}$ and  $\rb=(r_1,\dots,r_m)$. Suppose $T\subseteq[m]$ is an independent set of $\Delta$. Fix $r_j$ for $j\in[m]\setminus T$ and let $r_i=n$ vary for all $i\in T$. Then the $\Sym$-invariant chain of lattices $\L_{\Delta,T}=(\ker\mu_{\Delta,T,n})_{n\ge1}$ Markov-stabilizes.
	\end{theorem}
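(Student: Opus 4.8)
The plan is to deduce the statement from the local--global translation \Cref{stabilization-others}, together with the finiteness result \Cref{Finiteness-Graver}, by induction on $d=|T|$. First I would pass to the \emph{global} picture. With $I=\NN^d\times[c]$ and $c=\prod_{j\notin T}r_j$, let $\mu_{\Delta,T}\colon\ZZ^{(I)}\to\bigoplus_{k=1}^s\ZZ^{(\NN^{d_k}\times[c_k])}$ be the marginal map defined on the infinite ambient group by the same coordinatewise rule $\mu_{F_k}(\eb_\ib)=\eb_{\ib_{F_k}}$. Since each $\mu_{F_k}$ commutes with the inclusions $\ZZ^{(I_n)}\hookrightarrow\ZZ^{(I)}$, one has $\ker\mu_{\Delta,T,n}=\ker\mu_{\Delta,T}\cap\ZZ^{(I_n)}$ for every $n$; hence the limit of the chain $\L_{\Delta,T}$ is exactly $L\defas\ker\mu_{\Delta,T}$, and condition (ii)(a) of \Cref{stabilization-others} holds for free. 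By \Cref{stabilization-others} it therefore suffices to prove the global statement that $L=\ker\mu_{\Delta,T}$ has a \emph{finite equivariant Markov basis}.

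For the base case $d=1$ the ambient group is $\ZZ^{(\NN\times[c])}$, and \Cref{cor-finiteness-others}(iii) --- a consequence of \Cref{Finiteness-Graver} --- says that \emph{every} $\Sym$-invariant lattice there has a finite equivariant Markov basis; in particular so does $\ker\mu_{\Delta,T}$. This is the global form of the Ho\c{s}ten--Sullivant theorem already recorded above.

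For the inductive step, fix $t_0\in T$ and set $T'=T\setminus\{t_0\}$, so $|T'|=d-1$. The decisive use of independence is that $t_0$ shares no facet with any other element of $T$: for every facet $F_k$ with $t_0\in F_k$ one has $F_k\cap T=\{t_0\}$, so in each facet marginal the $t_0$-coordinate is grouped only with coordinates that stay fixed. This ``decoupling'' lets one build a finite equivariant Markov basis of $L$ out of two families of moves: (i) moves supported in the slices $\ZZ^{(\NN^{d-1}\times[N]\times[c])}$ obtained by freezing the $t_0$-coordinate to an initial segment $[N]$, on which $\mu_{\Delta,T}$ restricts to the marginal map of the same complex $\Delta$ but with $r_{t_0}=N$ adjoined to the fixed part and varying independent set $T'$ of size $d-1$, so that the induction hypothesis applies --- the decoupling being precisely what makes the resulting support bound \emph{uniform in $N$}; and (ii) moves supported in $\ZZ^{(W\times\NN\times[c])}$ for a bounded window $W$ in the $T'$-directions, handled by the $d=1$ case applied in the $t_0$-direction. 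One then checks, via a ``staircase'' decomposition of an arbitrary $\ub\in L$ with respect to the values occurring in its $t_0$-coordinate --- peeling off one $t_0$-level at a time, the decoupling guaranteeing that each partial sum remains in the ambient fiber --- that the $\Sym$-orbits of the moves in (i) and (ii) already connect every fiber. This is essentially the combinatorial argument of \cite[\S4]{HS12}.

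I expect the inductive step to be the main obstacle: making the staircase decomposition precise and, above all, proving that the support bounds coming from (i) are uniform in $N$. That uniformity is the heart of \cite[Theorem 4.7]{HS12}, and it is exactly where independence is indispensable --- for a non-independent $T$ the bounds genuinely blow up, as the no three-way interaction model in \Cref{no-3-way} shows. (If one is content to cite the hard combinatorics, a shortcut is to invoke \cite[Theorem 4.7]{HS12} directly for the local statement and use \Cref{stabilization-others} only for the equivalent global reformulation; the argument above is the self-contained route one would run with \Cref{Finiteness-Graver} doing the work at $d=1$.)
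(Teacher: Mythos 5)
The paper does not actually prove \Cref{independent-set}: it simply states it and cites Hillar--Sullivant \cite[Theorem 4.7]{HS12} as the source, then derives \Cref{global-independent-set} from it via \Cref{stabilization-others}. Indeed, the remark following \Cref{global-independent-set} explicitly says that a purely combinatorial proof of \Cref{independent-set} (and of its global form) would be interesting and is left open. So the paper's ``proof'' is a citation; your fallback observation at the end of your proposal --- invoke \cite[Theorem 4.7]{HS12} for the local statement --- is in fact exactly what the paper does.

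Your primary route (reduce to the global statement via \Cref{stabilization-others} and then induct on $d=|T|$ with \Cref{Finiteness-Graver} handling $d=1$) is a genuinely different approach, and the reduction to the global statement and the $d=1$ base case are both correct. However, the inductive step as written is not a proof but a sketch, and you acknowledge this yourself. The specific gap is circular: you assert that ``the decoupling [is] precisely what makes the resulting support bound uniform in $N$'', but establishing a support bound uniform in $N$ \emph{is} the content of the independent set theorem. Applying the induction hypothesis to the slice with $t_0$-coordinate frozen to $[N]$ gives a finite equivariant Markov basis whose support bound a priori depends on $N$, and nothing in the ``staircase decomposition'' as described actually controls this dependence. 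Hillar and Sullivant's proof requires substantial additional machinery (in their case, equivariant Noetherianity over products of symmetric groups and a careful analysis of the marginal map) precisely to obtain this uniformity. Without filling that in, your inductive step does not close, so the proposal as it stands is not a complete alternative proof --- it is an outline of a possible approach together with an honest admission of where the difficulty lies, plus a correct observation that one may fall back on the cited result, which is what the paper does.
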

	
	Extending the maps $\mu_{\Delta,T,n}$, we obtain the \emph{global $\Delta$-marginal map}
	\[
	\mu_{\Delta,T}:\ZZ^{(\NN^d\times[c])}\to \bigoplus_{k=1}^s\ZZ^{(\NN^{d_k}\times[c_k])}
	\]
	with $\mu_{\Delta,T}\restr{\ZZ^{([n]^d\times[c])}}=\mu_{\Delta,T,n}$ for all $n\ge 1$. Evidently, $\ker\mu_{\Delta,T}=\bigcup_{n\ge1}\ker\mu_{\Delta,T,n}$. \Cref{stabilization-others} thus yields the following global version of \Cref{independent-set}.
	
	\begin{corollary}
	    \label{global-independent-set}
	    Keep the assumption of \Cref{independent-set}. Then the lattice $\ker\mu_{\Delta,T}$ has a finite equivariant Markov basis.
	\end{corollary}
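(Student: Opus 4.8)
The plan is to deduce the corollary directly from the independent set theorem (\Cref{independent-set}) together with the local-global characterization of finiteness of equivariant Markov bases in \Cref{stabilization-others}.

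First I would verify that $\L_{\Delta,T}=(\ker\mu_{\Delta,T,n})_{n\ge1}$ is a $\Sym$-invariant chain of lattices whose limit is $\ker\mu_{\Delta,T}$. This is exactly what was recorded in the discussion preceding \Cref{independent-set} and in the paragraph introducing the global $\Delta$-marginal map: each $\mu_{\Delta,T,n}$ is compatible with the $\Sym(n)$-action, so $\ker\mu_{\Delta,T,n}$ is $\Sym(n)$-invariant; the chain is increasing because $\mu_{\Delta,T}$ restricts to $\mu_{\Delta,T,n}$ on $\ZZ^{([n]^d\times[c])}$, hence $\ker\mu_{\Delta,T,m}\subseteq\ker\mu_{\Delta,T,n}$ for $m\le n$; and since every element of $\ZZ^{(\NN^d\times[c])}$ has finite support, $\ker\mu_{\Delta,T}=\bigcup_{n\ge1}\ker\mu_{\Delta,T,n}$ is the limit of this chain.

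With this in hand, \Cref{independent-set} asserts that $\L_{\Delta,T}$ Markov-stabilizes. Applying the implication (i) $\Rightarrow$ (iii) of \Cref{stabilization-others} to the chain $\L_{\Delta,T}$, whose limit is $L=\ker\mu_{\Delta,T}$, yields that $\ker\mu_{\Delta,T}$ has a finite equivariant Markov basis, which is the claim. Since the corollary is a formal consequence of the two cited results, I do not expect any substantial obstacle; the only point requiring (brief) care is to confirm that the hypotheses of \Cref{stabilization-others} are met, namely that $\L_{\Delta,T}$ really is a $\Sym$-invariant chain with limit $\ker\mu_{\Delta,T}$, and this was already established in the setup of \Cref{independent-set}.
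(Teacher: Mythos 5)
Your proposal is correct and follows exactly the same route as the paper: observe that $\ker\mu_{\Delta,T}=\bigcup_{n\ge1}\ker\mu_{\Delta,T,n}$ is the limit of the $\Sym$-invariant chain $\L_{\Delta,T}$, invoke \Cref{independent-set} to get Markov-stabilization, and conclude via the implication (i)~$\Rightarrow$~(iii) of \Cref{stabilization-others}.
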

	
	Note that the assumption that $T$ is an independent set  cannot be omitted in \Cref{independent-set,global-independent-set}, as \Cref{no-3-way} shows.
	
	\begin{remark}
	     Hillar and Sullivant actually proved the algebraic version of \Cref{independent-set}, i.e. the chain of lattice ideals $(\Ik_{L_n})_{n\ge1}$ stabilizes, where $L_n=\ker\mu_{\Delta,T,n}$ for $n\ge1$. It would be interesting to have \emph{purely combinatorial} proofs of \Cref{independent-set,global-independent-set}. The framework developed in this paper might be helpful in finding such proofs.
	 \end{remark}

	     It is still open whether the lattice $\ker\mu_{\Delta,T}$ always has a finite equivariant Gr\"{o}bner basis. On the other hand, this lattice does not have a finite equivariant Graver basis in general. 
	     
	 \begin{example}
	   Consider the \emph{independence model} for 2-way tables, i.e. the hierarchical model determined by $\Delta=\{\{1\},\{2\}\}$ and  $\rb=(r_1,r_2)\in\NN^2$. In this model, $T=\{1,2\}$ is an independent set of $\Delta$. Let $r_1=r_2=n$ vary. Then it is known that the Graver basis of $\ker\mu_{\Delta,T,n}$ consists of the elements
	    \[
	    \ub=\eb_{i_1,j_1}-\eb_{i_1,j_2}+\eb_{i_2,j_2}-\eb_{i_2,j_3}
	    +\cdots+ \eb_{i_k,j_k}-\eb_{i_k,j_1},
	    \]
	    where $2\le k\le n$ and each index vector $(i_1,\dots,i_k),(j_1,\dots,j_k)\in[n]^k$ has pairwise distinct entries; see \cite[Proposition 4.2]{AHT}. Hence, $\ker\mu_{\Delta,T}$ has no finite equivariant Graver bases by \Cref{stabilization-others}.
	 \end{example}
	 
	 We close this section with the following cofiniteness property of the lattice $\ker\mu_{\Delta,T}$.
	 
	 \begin{proposition}
	     \label{cofiniteness}
	    Keep the assumption of \Cref{independent-set}. Then the quotient abelian group $\ZZ^{(\NN^d\times[c])}/\ker\mu_{\Delta,T}$ is a lattice that has a finite equivariant Graver basis.
	 \end{proposition}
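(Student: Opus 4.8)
The plan is to identify the quotient with the image of the global marginal map and then re-run the argument of \Cref{sec-equi-Graver} in a slightly larger ambient module. First, by the first isomorphism theorem the $\Sym$-equivariant map $\mu_{\Delta,T}$ induces a $\Sym$-equivariant isomorphism
\[
\ZZ^{(\NN^d\times[c])}/\ker\mu_{\Delta,T}\;\cong\;\image\mu_{\Delta,T}\;\subseteq\;\bigoplus_{k=1}^s\ZZ^{(\NN^{d_k}\times[c_k])}=:\ZZ^{(I')}.
\]
Since $\image\mu_{\Delta,T}$ is a subgroup of the free abelian group $\ZZ^{(I')}$, it is a lattice, and it is $\Sym$-invariant because $\mu_{\Delta,T}$ is $\Sym$-equivariant. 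So it remains to show that this lattice has a finite equivariant Graver basis.

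Here is where I would use that $T$ is an independent set of $\Delta$: this means $d_k=|T\cap F_k|\le 1$ for all $k$, so each summand $\ZZ^{(\NN^{d_k}\times[c_k])}$ equals $\ZZ^{(\NN\times[c_k])}$ when $d_k=1$ and equals $\ZZ^{c_k}$ with the trivial $\Sym$-action when $d_k=0$. Hence, as $\Sym$-modules,
\[
\ZZ^{(I')}\;\cong\;\ZZ^{(\NN\times[c'])}\oplus\ZZ^{e},
\qquad c'=\!\!\sum_{k\colon d_k=1}\!\!c_k,\quad e=\!\!\sum_{k\colon d_k=0}\!\!c_k,
\]
with $\Sym$ acting trivially on $\ZZ^e$. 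Thus it suffices to prove: every $\Sym$-invariant lattice $L$ in $\ZZ^{(J)}$, where $J=(\NN\times[c'])\sqcup[e]$ with $\Sym$ permuting the $\NN$-index and fixing $[e]$, has a finite equivariant Graver basis. I would deduce this exactly as \Cref{Finiteness-Graver} is deduced from \Cref{cor_Hilbert_Graver} and \Cref{lattice-monoid-ext}: since \Cref{prop_Hilbert_Graver} and the $\Sym$-equivariance of $\varphi$ hold for an arbitrary index set, the argument of \Cref{cor_Hilbert_Graver} reduces the claim to showing that the monoid $M=\varphi^{-1}(L)\cap\ZZ_{\ge0}^{(2J)}$, with $2J=J\sqcup J$, has a finite equivariant Hilbert basis. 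Mimicking the proof of \Cref{lattice-monoid-ext}, set $\ub\sqsubseteq_{\Inc}\vb$ on $\ZZ_{\ge0}^{(2J)}$ if $\pi(\ub)\sqsubseteq\vb$ for some $\pi\in\Inc$; writing $2J=(\NN\times[2c'])\sqcup[2e]$ and using the bijection $\ZZ_{\ge0}^{(2J)}\cong(\ZZ_{\ge0}^{2c'})^{*}\times\ZZ_{\ge0}^{2e}$, the order $\sqsubseteq_{\Inc}$ becomes the product of the Higman order $\sqsubseteq_H$ on $(\ZZ_{\ge0}^{2c'})^{*}$ and the order $\sqsubseteq$ on $\ZZ_{\ge0}^{2e}$. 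By Higman's lemma and the Gordan--Dickson lemma both factors are well-partial-orders, and a finite product of well-partial-orders is a well-partial-order, so $\sqsubseteq_{\Inc}$ is a well-partial-order on $\ZZ_{\ge0}^{(2J)}$; the rest of the proof of \Cref{lattice-monoid-ext} then carries over verbatim (the final segment $\F(M\setminus\{\nub\})$ is generated by a finite $\Hc\subseteq M\setminus\{\nub\}$, and since each $\pi\in\Inc$ agrees on any finite support with some $\sigma\in\Sym$ that also fixes the $[e]$-coordinates, $\Sym(\Hc)$ contains all $\sqsubseteq$-minimal elements of $M\setminus\{\nub\}$, i.e. the Hilbert basis of $M$ by \Cref{lem_Hilbert_monoid}).

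The main obstacle is precisely the trivially-acted-on summand $\ZZ^e$: when some facet $F_k$ of $\Delta$ is disjoint from $T$ one really does get $e>0$, and $\ZZ^{(\NN\times[c'])}\oplus\ZZ^{e}$ is not $\Sym$-isomorphic to any $\ZZ^{(\NN\times[C])}$ (there is no nonzero $\Sym$-equivariant map $\ZZ\to\ZZ^{(\NN)}$, so a trivial summand cannot be absorbed), which is why \Cref{Finiteness-Graver} cannot be invoked directly. The reason the proof nevertheless goes through is the observation that adjoining finitely many $\Sym$-fixed coordinates only replaces $(\ZZ_{\ge0}^{2c'})^{*}$ by its product with a finite-dimensional Dickson poset — harmless for the Higman's-lemma step — so the machinery of \Cref{sec-Graver} applies word for word. (Of course, if $\Delta$ has no facet disjoint from $T$ then $e=0$ and the statement is literally a special case of \Cref{Finiteness-Graver}.)
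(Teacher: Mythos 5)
Your proof is correct and follows essentially the same route as the paper: it identifies the quotient with $\image\mu_{\Delta,T}$, uses the independence of $T$ to recognize the ambient group as $\ZZ^{(\NN\times[c'])}\oplus\ZZ^{e}$ with $\Sym$ acting trivially on $\ZZ^e$, and then runs the Higman-plus-Dickson well-partial-order argument from \Cref{sec-Graver} in this mildly enlarged module. The paper packages exactly this last step as a separate lemma (\Cref{Finiteness-Graver-ext}) and cites it; you have simply inlined its proof, correctly noting both that the trivial summand cannot be absorbed into a $\ZZ^{(\NN\times[C])}$ and that this does not matter because a finite product of well-partial-orders is a well-partial-order.
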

	 
	 The proof is based on the next result that slightly generalizes \Cref{Finiteness-Graver}.
	 
	 \begin{lemma}
	   \label{Finiteness-Graver-ext}  
	   Let $c,m\in\ZZ_{\ge0}$. Consider the action of $\Sym$ on $\ZZ^{(\NN\times[c])}\oplus\ZZ^m$ that extends its action on $\ZZ^{(\NN\times[c])}$ with a trivial action on $\ZZ^m$, i.e.
	   \[
	   \sigma(\ub,\vb)=(\sigma(\ub),\vb)
	   \ \text{ for any } \ub\in \ZZ^{(\NN\times[c])}, \vb\in \ZZ^m, \sigma\in\Sym.
	   \]
	   Then every $\Sym$-invariant lattice in $\ZZ^{(\NN\times[c])}\oplus\ZZ^m$ has a finite equivariant Graver basis.
	 \end{lemma}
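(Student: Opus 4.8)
The plan is to follow the proof of \Cref{Finiteness-Graver} almost verbatim, inserting a trivially-acted summand throughout and checking that nothing breaks. The two ingredients are \Cref{prop_Hilbert_Graver} (which holds for an arbitrary index set), used to trade the Graver basis for a Hilbert basis after a doubling, and a version of \Cref{lattice-monoid-ext} that allows an extra trivial $\ZZ^{m'}$ factor, which is where Higman's lemma enters.

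Concretely, set $I=(\NN\times[c])\sqcup[m]$, so that $\ZZ^{(I)}=\ZZ^{(\NN\times[c])}\oplus\ZZ^m$ and the action in the statement is the one permuting $\NN$ on the first summand and fixing $[m]$ pointwise. Then $2I=I\sqcup I=(\NN\times[2c])\sqcup[2m]$, and the induced action on $\ZZ^{(2I)}=\ZZ^{(\NN\times[2c])}\oplus\ZZ^{2m}$ is again "permute $\NN$, fix the $\ZZ^{2m}$ summand''. As in the discussion following \Cref{prop_Hilbert_Graver}, the maps $\psi$ and $\varphi\colon\ZZ^{(2I)}\to\ZZ^{(I)}$, $(\ub,\vb)\mapsto\ub-\vb$, are $\Sym$-equivariant; hence for a $\Sym$-invariant lattice $L\subseteq\ZZ^{(I)}$ the lattice $\varphi^{-1}(L)\subseteq\ZZ^{(2I)}$ is $\Sym$-invariant, and so is the monoid $M=\varphi^{-1}(L)\cap\ZZ_{\ge0}^{(2I)}$. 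Exactly as in the proof of \Cref{cor_Hilbert_Graver}, \Cref{prop_Hilbert_Graver} together with the equivariance of $\varphi$ shows that $\varphi(\Hc)\setminus\{\nub\}$ is a finite equivariant Graver basis of $L$ whenever $\Hc$ is a finite equivariant Hilbert basis of $M$. So it suffices to prove the following mild generalization of \Cref{lattice-monoid-ext}: for all $c',m'\in\ZZ_{\ge0}$, every $\Sym$-invariant lattice $L'\subseteq\ZZ^{(\NN\times[c'])}\oplus\ZZ^{m'}$ (with $\Sym$ fixing $\ZZ^{m'}$) gives rise to a monoid $M'=L'\cap\bigl(\ZZ_{\ge0}^{(\NN\times[c'])}\oplus\ZZ_{\ge0}^{m'}\bigr)$ having a finite equivariant Hilbert basis.

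This generalization is proved just as \Cref{lattice-monoid-ext}. Define $\sqsubseteq_{\Inc}$ on $\ZZ_{\ge0}^{(\NN\times[c'])}\oplus\ZZ_{\ge0}^{m'}$ by $(\ub,\vb)\sqsubseteq_{\Inc}(\ub',\vb')$ if there is $\pi\in\Inc$ with $\pi(\ub)\sqsubseteq\ub'$ and $\vb\sqsubseteq\vb'$. Under the bijection $\ZZ_{\ge0}^{(\NN\times[c'])}\oplus\ZZ_{\ge0}^{m'}\cong(\ZZ_{\ge0}^{c'})^{*}\times\ZZ_{\ge0}^{m'}$ this order becomes the product of the Higman order on $(\ZZ_{\ge0}^{c'})^{*}$ and of $\sqsubseteq$ on $\ZZ_{\ge0}^{m'}$; the first is a well-partial-order by Higman's lemma (applied to the well-partial-order $\sqsubseteq$ on $\ZZ_{\ge0}^{c'}$ from the Gordan--Dickson lemma), the second is a well-partial-order by the Gordan--Dickson lemma, and a finite product of well-partial-orders is a well-partial-order. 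Hence $\sqsubseteq_{\Inc}$ is a well-partial-order, so the final segment $\F(M'\setminus\{\nub\})$ is generated by a finite set $\Hc\subseteq M'\setminus\{\nub\}$. For every $\vb\in M'\setminus\{\nub\}$ there are then $\ub\in\Hc$ and $\pi\in\Inc$ with $\pi(\ub)\sqsubseteq\vb$; since $\pi$ agrees on $\supp(\ub)$ with some $\sigma\in\Sym$, we get $\sigma(\ub)\sqsubseteq\vb$. Thus $\Sym(\Hc)$ contains every $\sqsubseteq$-minimal element of $M'\setminus\{\nub\}$, and \Cref{lem_Hilbert_monoid} shows that $\Hc$ contains a finite equivariant Hilbert basis of $M'$, as required. (One could equally well run this well-partial-order argument directly on $L'\setminus\{\nub\}$ rather than on the doubled monoid, using that $\ZZ^{c'}$ is a finite union of closed orthants each order-isomorphic to $\ZZ_{\ge0}^{c'}$, hence $(\ZZ^{c'},\sqsubseteq)$ is a well-partial-order; this avoids \Cref{prop_Hilbert_Graver} but is otherwise the same.)

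The argument introduces no genuinely new idea over \Cref{sec-equi-Graver}; the only thing to be done carefully is the bookkeeping—verifying that the doubling $2I$, the maps $\psi,\varphi$, the $\Sym(n)$-versus-$\Inc$ comparison, and the identification of $\sqsubseteq_{\Inc}$ with a product order all survive the addition of the trivial $\ZZ^m$ (resp. $\ZZ^{m'}$) summand—and recording the (standard) fact that a finite product of well-partial-orders is again a well-partial-order.
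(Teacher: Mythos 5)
Your proof matches the paper's almost exactly: reduce to a Hilbert-basis statement via the doubling map $\varphi$ and \Cref{prop_Hilbert_Graver}, then apply Higman's lemma together with the Gordan--Dickson lemma to obtain a well-partial-order (the product of $\sqsubseteq_{\Inc}$ and $\sqsubseteq$) on $\ZZ_{\ge0}^{(\NN\times[c'])}\oplus\ZZ_{\ge0}^{m'}$, whose finitely generated final segments produce the equivariant Hilbert basis. Your parenthetical alternative---running the well-partial-order argument directly on $L'\setminus\{\nub\}$ using that $(\ZZ^{c'},\sqsubseteq)$ is a well-partial-order as a finite union of orthants---is also correct and would bypass \Cref{prop_Hilbert_Graver}, but your main route is the one the paper takes.
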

	 
	 \begin{proof}
	     Let $L\subseteq \ZZ^{(\NN\times[c])}\oplus\ZZ^m$ be an arbitrary $\Sym$-invariant lattice. Arguing similarly to the proofs of \Cref{prop_Hilbert_Graver,cor_Hilbert_Graver}, one can show that $L$ has a finite equivariant Graver basis if and only if the monoid $\varphi^{-1}(L)\cap (\ZZ_{\ge0}^{(\NN\times[2c])}\oplus\ZZ_{\ge0}^{[2m]})$ has a finite equivariant Hilbert basis, where
	     \[
	     \varphi: \ZZ^{(\NN\times[2c])}\oplus\ZZ^{[2m]}\cong
	     (\ZZ^{(\NN\times[c])}\oplus\ZZ^m)\oplus(\ZZ^{(\NN\times[c])}\oplus\ZZ^m)
	     \to \ZZ^{(\NN\times[c])}\oplus\ZZ^m
	     \]
	     is defined as in \Cref{sec-Graver}. So by renaming $c$ and $m$, it suffices to show that the monoid $M=L'\cap (\ZZ_{\ge0}^{(\NN\times[c])}\oplus\ZZ_{\ge0}^{[m]})$ has a finite equivariant Hilbert basis for any $\Sym$-invariant lattice $L'\subseteq \ZZ^{(\NN\times[c])}\oplus\ZZ^m$. Consider the following partial order on $\ZZ_{\ge0}^{(\NN\times[c])}\oplus\ZZ_{\ge0}^{[m]}$:
	     \[
	     (\ub,\vb)\preceq (\ub',\vb')
	     \ \text{ if }\ \ub\sqsubseteq_{\Inc} \ub'
	     \ \text{ and } \vb\sqsubseteq \vb',
	     \]
	     where $\sqsubseteq_{\Inc}$ is defined in the proof of \Cref{lattice-monoid-ext}. Since $\sqsubseteq_{\Inc}$ and $\sqsubseteq$ are both well-partial-orders, it is easy to check that $\preceq$ is also a well-partial-order (see, e.g. \cite{Dr14}). Now an argument similar to the proof of \Cref{lattice-monoid-ext} shows that $M$ has a finite equivariant Hilbert basis, as desired.
	 \end{proof}
	     
	Let us now prove \Cref{cofiniteness}.
	
	\begin{proof}[Proof of \Cref{cofiniteness}]
	    First, $\ZZ^{(\NN^d\times[c])}/\ker\mu_{\Delta,T}$ is a lattice because it is isomorphic to a subgroup of $\bigoplus_{k=1}^s\ZZ^{(\NN^{d_k}\times[c_k])}$. Since $T$ is an independent set of $\Delta$, either $d_k=0$ or $d_k=1$ for every $k\in[s]$. It follows that $\bigoplus_{k=1}^s\ZZ^{(\NN^{d_k}\times[c_k])}$ is isomorphic to $\ZZ^{(\NN\times[c])}\oplus\ZZ^m$ for some $c,m\in\ZZ_{\ge0}$. The desired conclusion is now immediate from \Cref{Finiteness-Graver-ext}.
	\end{proof}


\end{document}